\renewcommand{\theenumi}{\arabic{enumi}}
\renewcommand{\theequation}{*}
\theoremstyle{plain}
\newtheorem{mainthm}{Th\'eor\`eme}
\newtheorem{mainsectionthm}{Th\'eor\`eme}
\newtheorem{mainsectionlemm}[mainsectionthm]{Lemme}
\numberwithin{mainsectionthm}{subsection}
\newtheorem*{mainsectioncor}{Corollaire}
\newtheorem{thm}[subsubsection]{Th\'eor\`eme}
\newtheorem{lemm}[subsubsection]{Lemme}
\newtheorem{prop}[subsubsection]{Proposition}
\newtheorem{obsv}[subsubsection]{Observation}
\DeclareMathOperator{\Mod}{\mathtt{Mod}}
\DeclareMathOperator{\Surj}{\mathit{Surj}}
\DeclareMathOperator{\dg}{\mathit{dg}}
\DeclareMathOperator{\sh}{\mathit{sh}}
\DeclareMathOperator{\sgn}{sgn}
\DeclareMathOperator{\Res}{\mathit{R}}
\DeclareMathOperator{\Free}{\mathit{F}}
\DeclareMathOperator{\Dec}{\mathit{Dec}}
\DeclareMathOperator{\X}{\mathcal{X}}
\DeclareMathOperator{\Z}{\mathcal{Z}}
\DeclareMathOperator{\Cat}{\mathit{Cat}}
\DeclareMathOperator{\Gr}{\mathit{Gr}}
\DeclareMathOperator{\Ob}{\mathrm{Ob}}
\DeclareMathOperator{\Tor}{\mathrm{Tor}}
\DeclareMathOperator{\Ext}{\mathrm{Ext}}
\DeclareMathOperator{\Mor}{\mathrm{Mor}}
\DeclareMathOperator{\Hom}{\mathrm{Hom}}
\DeclareMathOperator{\End}{\mathrm{End}}
\DeclareMathOperator{\In}{\mathit{In}}
\DeclareMathOperator{\id}{\mathrm{id}}
\DeclareMathOperator{\tr}{\mathit{tr}}
\DeclareMathOperator{\pt}{\mathit{b}}
\DeclareMathOperator*{\colim}{\mathrm{colim}}
\DeclareMathOperator{\NN}{\mathbb{N}}
\DeclareMathOperator{\ZZ}{\mathbb{Z}}
\DeclareMathOperator{\kk}{\Bbbk}
\DeclareMathOperator{\EOp}{\mathsf{E}}
\DeclareMathOperator{\dset}{\underline{\mathsf{d}}}
\DeclareMathOperator{\eset}{\underline{\mathsf{e}}}
\DeclareMathOperator{\fset}{\underline{\mathsf{f}}}
\DeclareMathOperator{\rset}{\underline{\mathsf{r}}}
\DeclareMathOperator{\sset}{\underline{\mathsf{s}}}
\DeclareMathOperator{\tset}{\underline{\mathsf{t}}}
\DeclareMathOperator{\aobj}{\underline{a}}
\DeclareMathOperator{\bobj}{\underline{b}}
\DeclareMathOperator{\xobj}{\underline{x}}
\DeclareMathOperator{\phiobj}{\underline{\phi}}
\DeclareMathOperator{\sigmaobj}{\underline{\sigma}}
\DeclareMathOperator{\tauobj}{\underline{\tau}}
\DeclareMathOperator{\thetaobj}{\underline{\theta}}
\title[La cat\'egorie des arbres \'elagu\'es de Batanin est de Koszul]{La cat\'egorie des arbres \'elagu\'es de Batanin\\est de Koszul}
\author{Benoit Fresse}
\date{29 septembre 2009 (manuscrit r\'evis\'e le 7 f\'evrier 2010}
\address{UMR 8524 de l'Universit\'e Lille 1 - Sciences et Technologies - et du CNRS\\
Cit\'e Scientifique -- B\^atiment M2\\
F-59655 Villeneuve d'Ascq C\'edex (France)}
\email{Benoit.Fresse@math.univ-lille1.fr}
\urladdr{http://math.univ-lille1.fr/\~{ }fresse}
\subjclass{Primary: 57T30; Secondary: 05C05, 18G15, 18G55, 18D20, 55P48, 06A11}
\thanks{Recherche soutenue en partie par le contrat ANR-06-JCJC-0042 ``OBTH''}
\begin{document}

\begin{abstract}
La d\'efinition de la cat\'egorie des arbres \'elagu\'es,
dont les objets sont des arbres planaires \`a $n$-niveaux avec toutes les feuilles au niveau sup\'erieur,
a \'et\'e d\'egag\'ee dans les travaux de M. Batanin,
en partie pour comprendre la structure cellulaire de certaines $E_n$-op\'erades
en termes cat\'egoriques.
Le but de cet article est de montrer que la version enrichie en $\kk$-modules de la cat\'egorie des arbres \'elagu\'es est de Koszul.
Ce r\'esultat nous donne un mod\`ele diff\'erentiel gradu\'e minimal de cette cat\'egorie,
des petits complexes pour calculer des foncteurs $\Tor$ et $\Ext$ dans les cat\'egories de diagrammes qui lui sont associ\'es,
et permet d'\'etendre aux $E_n$-alg\`ebres un r\'esultat de M. Livernet et B. Richter sur l'interpr\'etation
des constructions bar it\'er\'ees en termes de foncteurs $\Tor$ cat\'egoriques.
\end{abstract}

\begin{altabstract}
The category of pruned trees has been defined by M. Batanin with the aim of understanding
the cell structure of certain $E_n$-operads in categorical terms.
The objects of this category are planar trees with $n$ levels
so that all leaves are at the top level of the tree.
The goal of this article is to prove that the category of pruned trees is Koszul.
This result gives us a minimal differential graded model of this category,
small complexes to computing Tor and Ext functors in associated categories of diagrams,
and allows us to extend to $E_n$-algebras a result of M. Livernet and B. Richter about the interpretation of iterated bar complexes
in terms of categorical Tor functors.
\end{altabstract}

\maketitle

\section*{Introduction}

La cat\'egorie ensembliste des arbres \'elagu\'es \`a $n$-niveaux, que l'on notera $\Omega_n^{epi}$,
est une sous-cat\'egorie de la cat\'egorie des arbres $\Omega_n$
qui est d\'efinie dans les travaux de M. Batanin sur les $\omega$-cat\'egories faibles~\cite{BataninGlobular}.
Les \'el\'ements de $\Omega_n$ repr\'esentent, dans l'id\'ee de~\cite{BataninGlobular},
des sch\'emas de compositions
dans les cat\'egories sup\'erieures (on renvoie \'egalement aux articles~\cite{BataninStreet,Joyal} pour des points de vue diff\'erents sur cette id\'ee).
La cat\'egorie $\Omega_n^{op}$, oppos\'ee \`a $\Omega_n$,
poss\`ede aussi une d\'efinition purement combinatoire,
en termes de produits en couronne it\'er\'es de la cat\'egorie simpliciale,
qui est utilis\'ee dans~\cite{BergerWreathCategories}
pour montrer qu'une localisation de la cat\'egorie des $\Omega_n^{op}$-espaces
d\'efinit un mod\`ele de la cat\'egorie des espaces de lacets $n$-it\'er\'es.

Les objets de $\Omega_n$ sont des arbres planaires \`a $n$-niveaux.
Les objets de $\Omega_n^{epi}$ sont les arbres de $\Omega_n$
dont toutes les feuilles sont situ\'ees au niveau sup\'erieur.
Les notions de code-barres~\cite{GetzlerJones} et d'ordre compl\'ementaire~\cite{KontsevichSoibelman}
qui apparaissent dans la d\'efinition
de certaines $E_n$-op\'erades
correspondent \`a diff\'erentes repr\'esentations des objets de $\Omega_n^{epi}$.
L'interpr\'etation de ces cat\'egories d'objets en termes de cat\'egories sup\'erieures
permet, selon M. Batanin~\cite{BataninEckmannHilton,BataninEnOperads},
d'obtenir une caract\'erisation intrins\`eque
du type d'homotopie des $E_n$-op\'erades,
du moins dans le cadre topologique.

Les objets de $\Omega_n^{epi}$ interviennent \'egalement dans le mod\`ele de Milgram des espaces de lacets it\'er\'es~\cite{BergerCell,Milgram},
dans les travaux de Fox et Neuwirth sur la pr\'esentation des groupes de tresses d'Artin~\cite{FoxNeuwirth},
et dans la d\'efinition des complexes bar it\'er\'es~\cite{FresseIteratedBar}.

\medskip
On consid\`ere dans cet article la version enrichie en $\kk$-modules de $\Omega_n^{epi}$,
pour un anneau de base fix\'e $\kk$.
Cette cat\'egorie elle-m\^eme peut se regarder comme une cat\'egorie enrichie en modules diff\'erentiels gradu\'es (une dg-cat\'egorie)
concentr\'ee en degr\'ee $0$.
Les constructions usuelles de l'alg\`ebre diff\'erentielle gradu\'ee ont une g\'en\'eralisation
naturelle dans le cadre des dg-cat\'egories.
On peut ainsi d\'efinir un analogue cat\'egorique des constructions bar et cobar classiques de l'alg\`ebre,
puis d\'efinir un analogue des complexes de Koszul de~\cite{Priddy} pour les cat\'egories munies d'une graduation en poids,
et \'etendre la notion d'alg\`ebre de Koszul, telle qu'elle est d\'efinie dans~\cite{Priddy},
aux cat\'egories.
On notera $\dg\Cat_{\Omega_n^{epi}}$ la cat\'egorie des dg-cat\'egories $\Theta$
qui ont $\Ob\Theta = \Ob\Omega_n^{epi}$ comme ensemble d'objets.

Le premier objectif de cet article est de montrer que la cat\'egorie $\Omega_n^{epi}$
est de Koszul~:
l'homologie de sa construction bar $B(\Omega_n^{epi})$
se r\'eduit \`a une cocat\'egorie enrichie en~$\kk$-modules gradu\'es $K(\Omega_n^{epi})$
dont les \'el\'ements $\alpha\in K(\Omega_n^{epi})(\tauobj,\sigmaobj)$
repr\'esentent des cycles de degr\'e maximal dans le complexe $B(\Omega_n^{epi})(\tauobj,\sigmaobj)$,
pour tout couple d'objets $(\tauobj,\sigmaobj)\in\Ob\Omega_n^{epi}\times\Ob\Omega_n^{epi}$.
Ce r\'esultat nous permettra d'obtenir~:
\begin{enumerate}
\item\label{Result:MinimalComplex}
un complexe naturel minimal $K(S,\Omega_n^{epi},T)$
pour calculer les foncteurs\break $\Tor^{\Omega_n^{epi}}_*(S,T)$
sur les cat\'egories de diagrammes associ\'ees \`a $\Omega_n^{epi}$,
ainsi qu'un complexe de cocha\^\i nes naturel minimal $C(S,\Omega_n^{epi},T)$
pour calculer les foncteurs $\Ext_{\Omega_n^{epi}}^*(S,T)$~;
\item\label{Result:MinimalModel}
un mod\`ele cofibrant minimal de $\Omega_n^{epi}$
dans $\dg\Cat_{\Omega_n^{epi}}$,
donn\'e par une construction cobar $B^c(K(\Omega_n^{epi}))$
sur la cocat\'egorie $K(\Omega_n^{epi})$.
\end{enumerate}
La construction bar de $\Omega_n^{epi}$
dans $\dg\Cat_{\Omega_n^{epi}}$
s'identifie en fait au complexe de cha\^\i nes du nerf de la version ensembliste de $\Omega_n^{epi}$.
Le r\'esultat obtenu dans cet article permet donc de d\'eterminer l'homologie du nerf de $\Omega_n^{epi}$.

On ne suit pas le plan habituel pour montrer qu'une alg\`ebre est de Koszul.
On d\'efinit d'abord une cocat\'egorie $K(\Omega_n^{epi})$ de fa\c con directe
que l'on forme avec les $\kk$-modules duaux (d\'ecal\'es en degr\'e)
des $\kk$-modules engendr\'es par les morphismes ensemblistes de $\Omega_n^{epi}$.
On d\'efinit aussi un complexe $K(S,\Omega_n^{epi},T)$
de fa\c con directe, \`a partir de cette cocat\'egorie $K(\Omega_n^{epi})$.
On d\'emontre, en \'etendant un argument de~\cite{LivernetRichter},
que ce complexe satisfait une propri\'et\'e d'acyclicit\'e,
ce qui entraine indirectement que la cat\'egorie $\Omega_n^{epi}$ est de Koszul,
et que la cocat\'egorie $K(\Omega_n^{epi})$, duale de $\Omega_n^{epi}$ dans les $\kk$-modules,
est aussi la cocat\'egorie duale de $\Omega_n^{epi}$ au sens de la dualit\'e de Koszul
des cat\'egories.
On en d\'eduit ensuite que les propri\'et\'es (\ref{Result:MinimalComplex}-\ref{Result:MinimalModel})
ci-dessus sont satisfaites pour cette cocat\'egorie $K(\Omega_n^{epi})$,
que nous avons d\'efini de fa\c con directe,
et le complexe $K(S,\Omega_n^{epi},T)$
qui lui est associ\'e.

\medskip
Cette article comprend une section pr\'eliminaire pour fixer le cadre de nos constructions,
deux parties principales 1-2, et une section de conclusion.

La section pr\'eliminaire servira principalement \`a pr\'eciser nos conventions de base
sur les modules diff\'erentiels gradu\'es.

La Partie 1 est consacr\'ee \`a la d\'emonstration de la propri\'et\'e de Koszul
et aux applications du point~(\ref{Result:MinimalComplex}).
La d\'efinition de la cat\'egorie des arbres \'elagu\'es $\Omega_n^{epi}$
est rappel\'ee au d\'ebut de cette partie.
On introduit ensuite la cocat\'egorie $K(\Omega_n^{epi})$,
puis les complexes \`a coefficients $K(S,\Omega_n^{epi},T)$,
avant de prouver les r\'esultats annonc\'es.

Pour cette partie du travail,
on n'aura besoin que des notions de base de l'alg\`ebre homologique
-- complexes, foncteurs Tor, suites spectrales --
dans le cadre additif des cat\'egories de $\Omega_n^{epi}$-diagrammes.
Cependant,
pour des raisons de coh\'erence avec la suite de l'article,
on utilisera la terminologie d'\'equivalence faible - issue du langage des cat\'egories mod\`eles -
pour d\'esigner tout morphisme d'objets diff\'erentiels gradu\'es
induisant un isomorphisme en homologie.
En outre,
on parlera de modules diff\'erentiels gradu\'es (dg-modules en abr\'eg\'e) pour d\'esigner les objets de notre cat\'egorie
de base plut\^ot que de complexes de cha\^\i nes.
En fait,
on r\'eservera la terminologie de complexe \`a certaines constructions sp\'ecifiques
sur les modules diff\'erentiels gradu\'es.
On renvoie le lecteur \`a la section pr\'eliminaire pour un expos\'e rapide de nos conventions.

La Partie 2 est consacr\'ee \`a la d\'efinition du mod\`ele cofibrant de $\Omega_n^{epi}$,
que l'on a annonc\'ee en~(\ref{Result:MinimalModel}),
et aux applications de ce mod\`ele pour la d\'efinition d'une bonne cat\'egorie de $\Omega_n^{epi}$-diagrammes
homotopiques.
On commencera cette partie par une section d'introduction exposant, dans le cadre conceptuel de l'alg\`ebre homotopique,
les applications des constructions de l'alg\`ebre diff\'erentielle gradu\'ee
aux dg-cat\'egories.

Le but initial de ce travail \'etait d'\'etendre aux $E_n$-alg\`ebres
un r\'esultat de M. Livernet et B. Richter sur l'interpr\'etation des constructions bar it\'er\'ees
en termes de foncteurs $\Tor$-cat\'egoriques~\cite{LivernetRichter}.
Ces applications seront abord\'ees dans la section de conclusion de cet article.

\paragraph*{Remerciements}
Je remercie Muriel Livernet pour une s\'erie de discussions qui sont \`a l'origine de ce travail.
Je la remercie aussi, ainsi que Bernhard Keller et \'Eric Hoffbeck, pour des remarques et questions
sur la version pr\'eliminaire du manuscrit
qui m'ont permis d'am\'eliorer la pr\'esentation de certaines constructions.

\section*{Cadre g\'en\'eral}\label{Background}

Pour commencer, on reprend la d\'efinition de la cat\'egorie des modules diff\'e\-ren\-tiels gradu\'es
qui fournira le cadre de nos constructions,
et on revoit rapidement la d\'efinition de sa structure de cat\'egorie mod\`ele.
On rappelle aussi des constructions fondamentales --
produit tensoriel des modules diff\'e\-ren\-tiels gradu\'es, hom-interne, suspension et torsion -
qui seront utilis\'ees tout au long de l'article.

\subsubsection{Cat\'egories de dg-modules}\label{Background:DGModules}
On travaille sur un anneau de base fix\'e $\kk$.
On utilisera la cat\'egorie des $\kk$-modules
et la cat\'egorie des modules diff\'erentiels gradu\'es sur $\kk$
comme cat\'egories de base.
La cat\'egorie des modules diff\'erentiels gradu\'es (on dira dg-modules pour abr\'eger)
sera not\'ee $\dg\Mod$.
On suppose par convention qu'un dg-module de $\dg\Mod$
est gradu\'e inf\'erieurement
et poss\`ede une diff\'erentielle interne $\delta: C\rightarrow C$
qui diminue le degr\'e de $1$.
Un $\kk$-module peut se voir comme un dg-module concentr\'e en degr\'e $0$.

On garde aussi la convention habituelle de nos articles qui est de supposer que les dg-modules de $\dg\Mod$
sont $\ZZ$-gradu\'es.
Le lecteur pourra faire un choix inverse et supposer que les dg-modules de notre cat\'egorie de base
sont concentr\'es en degr\'e $*\geq 0$,
mais certaines constructions (les foncteurs de dg-modules d'homomorphismes notamment)
produisent naturellement des modules $\ZZ$-gradu\'es.

\subsubsection{Structures tensorielles et homomorphismes de dg-modules}\label{Background:TensorStructures}
On munit la ca\-t\'e\-go\-rie des dg-modules
de son produit tensoriel usuel
\begin{equation*}
\otimes: \dg\Mod\times\dg\Mod\rightarrow\dg\Mod,
\end{equation*}
qui en fait une cat\'egorie mono\"\i dale sym\'etrique,
avec l'isomorphisme de sym\'etrie $\tau: C\otimes D\rightarrow D\otimes C$
d\'efini par la r\`egle des signes.
On utilisera la notation $\pm$
pour d\'esigner tout signe produit par une application
de cet isomorphisme de sym\'etrie.

La cat\'egorie $\dg\Mod$ poss\`ede
un hom-interne
\begin{equation*}
\Hom_{\dg\Mod}(-,-): \dg\Mod^{op}\times\dg\Mod\rightarrow\dg\Mod
\end{equation*}
et forme donc une cat\'egorie mono\"\i dale sym\'etrique ferm\'ee.

Le dg-module $\Hom_{\dg\Mod}(C,D)$
est engendr\'e en degr\'e $d$
par les morphismes de $\kk$-modules $f: C\rightarrow D$
qui augmentent le degr\'e de $d$.
La diff\'erentielle de $f: C\rightarrow D$ dans $\Hom_{\dg\Mod}(C,D)$
est donn\'ee par le commutateur gradu\'e de $f$
avec les diff\'erentielles internes~:
\begin{equation*}
\delta(f) = \delta f - \pm f \delta.
\end{equation*}
Le signe $\pm$ est d\'etermin\'e par la commutation de $f$, de degr\'e $d$,
avec le symbole de diff\'erentielle interne $\delta$, de degr\'e $-1$.
Donc, dans cette formule, on a $\pm = (-1)^{\deg f}$.
On utilise la terminologie d'homomorphisme de dg-modules
pour d\'esigner les \'el\'ements de ce dg-hom $\Hom_{\dg\Mod}(C,D)$
et les distinguer des morphismes actuels de la cat\'egorie des dg-modules.

Le dg-hom $\Hom_{\dg\Mod}(C,D)$ est naturellement $\ZZ$-gradu\'e, m\^eme lorsque les dg-modules sont concentr\'es en degr\'e $*\geq 0$.
On peut prendre une troncature en degr\'e $*\geq 0$
de ce dg-module
pour construire un hom interne dans la cat\'egorie des dg-modules $\NN$-gradu\'es,
cependant c'est toujours de la version $\ZZ$-gradu\'ee du dg-hom $\Hom_{\dg\Mod}(C,D)$
dont on aura besoin dans l'article.

\subsubsection{Suspension et d\'esuspension des dg-modules}\label{Background:Suspension}
Soit $\kk[d]$ le dg-module de rang $1$ concentr\'e en degr\'e $d$,
muni d'une diff\'erentielle \'evidemment triviale.
On associe \`a tout dg-module $C$ le dg-module d\'ecal\'e $C[d]$
tel que $C[d] = \kk[d]\otimes C$.
Pour $d=1$, on obtient ainsi l'op\'eration de suspension des dg-modules $\Sigma C = \kk[1]\otimes C$.
Pour $d=-1$, on obtient l'op\'eration de d\'esuspension $\Sigma^{-1} C = \kk[-1]\otimes C$.
Ces op\'erations seront utilis\'ees au~\S\ref{DGConstructions}.

\subsubsection{Homomorphismes de torsion de dg-modules}\label{Background:TwistedDGModules}
On utilise dans certaines constructions des dg-modules $C$ munis d'un homomorphisme $\partial: C\rightarrow C$,
de degr\'e $-1$,
qui additionn\'e \`a la diff\'erentielle interne de $C$
d\'efinit une nouvelle diff\'erentielle $\delta+\partial: C\rightarrow C$
sur le module gradu\'e sous-jacent \`a $C$.
On obtient ainsi un nouveau dg-module que l'on d\'esignera par la donn\'ee du couple $(C,\partial)$.
La relation $(\delta+\partial)^2 = 0$, n\'ecessaire pour que la diff\'erentielle de $(C,\partial)$ soit bien d\'efinie,
est \'equivalente \`a l'\'equation $\delta\partial + \partial\delta + \partial^2 = 0$
puisque la diff\'erentielle interne de $C$ v\'erifie d\'ej\`a $\delta^2 = 0$.
On dit alors que $\partial$ est un homomorphisme de torsion du dg-module $C$.
On note que la relation $\delta\partial + \partial\delta + \partial^2 = 0$
s'interpr\`ete comme une identit\'e
\begin{equation*}
\delta(\partial) + \partial^2 = 0
\end{equation*}
dans $\Hom_{\dg\Mod}(C,C)$.

\subsubsection{Complexes de dg-modules}\label{Background:DGModuleComplexes}
On utilisera la terminologie de complexe pour des dg-modules tordus particuliers $(C,\partial)$
naturellement d\'efinis par des suites
\begin{equation}
\cdots\rightarrow C_{d}\xrightarrow{\partial} C_{d-1}\rightarrow\cdots\xrightarrow{\partial} C_0
\end{equation}
comme dans l'alg\`ebre homologique classique.
Les exemples consid\'er\'es dans l'article comprennent~:
la construction bar $B(\Omega_n^{epi})$,
la version \`a coefficients de la construction bar $B(S,\Omega_n^{epi},T)$,
et les constructions de Koszul correspondantes.

Les composantes $C_d$ d'une telle suite~(*) sont en g\'en\'eral des dg-modules
qui poss\`edent eux-m\^emes une graduation
et une diff\'erentielle interne $\delta: C_d\rightarrow C_d$.
Le dg-module tordu associ\'e \`a la suite~(*)
est alors d\'efini par la somme de ces dg-modules $C = \bigoplus_{d=0}^{\infty} C_d$
avec un homomorphisme de torsion induit par composante par composante
par les homomorphismes $\partial: C_{d}\rightarrow C_{d-1}$
donn\'es dans la suite~(*).
On ne suppose pas n\'ecessairement que ces homomorphismes v\'erifient $\partial^2 = 0$.
On aura seulement besoin de la relation $\delta(\partial) + \partial^2 = 0$
du~\S\ref{Background:TwistedDGModules}
pour que le dg-module tordu $(C,\partial)$
soit bien d\'efini.

Si la suite~(*) est constitu\'ee de $\kk$-modules,
sans graduation ni diff\'erentielle interne,
alors on identifie tacitement chaque composante~$C_d$ \`a un dg-module concentr\'e en degr\'e~$d$
pour appliquer notre d\'efinition.
La composante de degr\'e~$*$ de $(C,\partial)$ s'identifie alors au $\kk$-module donn\'e $C_*$
et la diff\'erentielle de $(C,\partial)$
se r\'eduit \`a la somme des homorphismes $\partial: C_*\rightarrow C_{*-1}$
donn\'es dans la suite~(*).

La composante de degr\'e $*$ du dg-module tordu $(C,\partial)$
associ\'e \`a une suite~(*)
est, dans le cas g\'en\'eral, constitu\'ee de la somme des composantes de degr\'e~$*$
de chacun des dg-modules $C_d$, $d\in\NN$.
On parlera alors de la \emph{composante de degr\'e $d$ du complexe}~$(C,\partial)$
pour d\'esigner le dg-module $C_d$
et \'eviter toute \'equivoque.

\subsubsection{Le langage des cat\'egories mod\`eles}\label{Background:ModelStructure}
On munit la cat\'egorie des dg-modules de sa structure mod\`ele projective standard,
telle qu'elle est d\'efinie dans~\cite[\S 2]{Hovey},
de sorte qu'un morphisme de dg-modules $f: C\rightarrow D$
forme~:
\begin{itemize}
\item
une \'equivalence faible s'il induit un isomorphisme en homologie~;
\item
une fibration s'il est surjectif en tout degr\'e~;
\item
une cofibration s'il poss\`ede la propri\'et\'e de rel\`evement \`a droite par rapport
aux fibrations acycliques (les fibrations qui sont des \'equivalences faibles).
\end{itemize}

On renvoie le lecteur d\'ebutant \`a~\cite{Hovey}
pour un expos\'e complet sur les applications
des cat\'egories mod\`eles
aux dg-modules.
Rappelons simplement que les cofibrations de dg-modules se d\'efinissent de fa\c con effective
comme des r\'etracts d'inclusions $i: C\rightarrow(C\oplus E,\partial)$
o\`u $D = (C\oplus E,\partial)$
est un module tordu tel que $E = \bigoplus_{\alpha\in\Lambda} \kk e_\alpha$
est un $\kk$-module gradu\'e libre (sans diff\'erentielle interne)
muni d'une filtration
d\'efinie au niveau de sa base
\begin{equation*}
0 = \underbrace{\bigoplus_{\alpha\in\Lambda_0} \kk e_\alpha}_{E_0}\subset\cdots
\subset\underbrace{\bigoplus_{\alpha\in\Lambda_{\lambda}} \kk e_\alpha}_{E_\lambda}\subset\cdots
\subset\colim_{\lambda\in\NN}\bigl\{\bigoplus_{\alpha\in\Lambda_{\lambda}} \kk e_\alpha\bigr\} = E
\end{equation*}
de sorte que l'homomorphisme de torsion v\'erifie $\partial(C) = 0$
et $\partial(E_{\lambda})\subset C\oplus E_{\lambda-1}$,
pour tout $\lambda>0$.
Cette caract\'erisation nous permet de voir ais\'ement que le dg-module tordu $D = (C,\partial)$
form\'e \`a partir d'un complexe de $\kk$-modules projectifs,
ou plus g\'en\'eralement \`a partir d'un complexe de dg-modules cofibrants,
d\'efinit lui-m\^eme un objet cofibrant de la cat\'egorie des dg-modules (rappelons qu'un objet $D$ est cofibrant
lorsque le morphisme initial $0\rightarrow D$ est une cofibration).

On adoptera la terminologie de dg-cofibration pour d\'esigner tout morphisme, d\'efini \`a l'int\'erieur d'une cat\'egorie,
qui par oubli de structure d\'efinit une cofibration dans la cat\'egorie des dg-modules.
On utilisera des conventions similaires en parlant de dg-modules cofibrants
et d'objets dg-cofibrants.
On supposera g\'en\'eralement que les objets consid\'er\'es dans ce travail
sont dg-cofibrants.
Cette pr\'ecaution est superflue lorsque l'anneau de base est un corps
car tout les dg-modules sur $\kk$ sont alors cofibrants.

On utilisera le langage des cat\'egories mod\`eles
tout au long de l'article,
m\^eme si on n'aura besoin - dans la premi\`ere partie de l'article du moins -
que des id\'ees de l'alg\`ebre homologique classique.

\section*{Partie 1. Le complexe de Koszul des la cat\'egorie des arbres \'elagu\'es}

Cette partie est consacr\'ee \`a l'\'etude des complexes de Koszul $K(S,\Omega_n^{epi},T)$ et aux applications
de ces complexes pour le calcul des foncteurs Tor dans les cat\'egories
de $\Omega_n^{epi}$-diagrammes.
D'abord (au~\S\ref{PrunedTrees})
on rappelle la d\'efinition de la cat\'egorie des arbres \'elagu\'es~$\Omega_n^{epi}$.
Ensuite (au~\S\ref{KoszulConstruction})
on donne une d\'efinition ad-hoc de la construction de Koszul
associ\'ee \`a cette cat\'egorie et on montre que cette construction~$K(\Omega_n^{epi})$
se plonge dans le complexe bar usuel~$B(\Omega_n^{epi})$
dont on rappelle \'egalement la d\'efinition.
Puis (aux~\S\S\ref{CoefficientConstructions}-\ref{DGCoefficientConstructions})
on d\'efinit et \'etudie les versions \`a coefficients de la construction de Koszul de~$\Omega_n^{epi}$.
On explique que ces complexes de Koszul~$K(S,\Omega_n^{epi},T)$
sont faiblement \'equivalents aux constructions bar \`a coefficients~$B(S,\Omega_n^{epi},T)$
et per\-met\-tent de d\'eterminer des foncteurs~$\Tor^{\Omega_n^{epi}}_*(S,T)$
d\`es lors que certains de ces complexes sont acycliques,
ce que l'on d\'emontre dans la section finale de cette partie (au~\S\ref{KoszulAcyclicity}).

\subsection{La cat\'egorie des arbres \'elagu\'es}\label{PrunedTrees}
On reprend dans cette section la d\'efinition de la cat\'egorie des arbres \'elagu\'es~$\Omega_n^{epi}$
telle qu'elle appara\^\i t dans les travaux
de M. Batanin~\cite{BataninEckmannHilton,BataninEnOperads}.
On revoit aussi la d\'efinition d'une cat\'egorie comma,
form\'ee \`a partir de~$\Omega_n^{epi}$,
qui intervient dans notre travail sur les complexes bar it\'er\'es~\cite{FresseIteratedBar}.
On fera des rappels suppl\'ementaires sur ces d\'efinitions
en cours d'article.

On renvoie aux r\'ef\'erences cit\'ees dans l'introduction
et \`a la bibliographie de~\cite{BataninEckmannHilton,BataninEnOperads}
pour d'autres pr\'esentations de la cat\'egorie des arbres \'elagu\'es
et de ses variantes.

\subsubsection{La cat\'egorie des arbres \'elagu\'es~: d\'efinition formelle et repr\'esentation graphique}\label{PrunedTrees:Definition}
On note $\rset$ l'ensemble ordonn\'e $\rset = \{1<\dots<r\}$.

Les objets de $\Omega_n^{epi}$, d\'esign\'es par des lettres grecques soulign\'ees $\tauobj$,
sont les suites de surjections croissantes
\begin{equation*}
\tset_0\xrightarrow{\tau_1}\tset_1\xrightarrow{\tau_2}\cdots
\xrightarrow{\tau_n}\tset_n = *
\end{equation*}
avec l'ensemble final $* = \underline{\mathsf{1}}$
comme dernier ensemble but.
Une telle suite de surjections
d\'efinit un arbre planaire dont les sommets sont arrang\'es
sur des niveaux $0,\dots,n$~:
l'ensemble $\tset_i$
d\'efinit l'ensemble des sommets au niveau $i$
de l'arbre, ordonn\'es de la gauche vers la droite~;
la surjection $\tau_i$
d\'efinit l'ensemble des ar\^etes des sommets de niveau~$i-1$ vers les sommets de niveau~$i$.
Un exemple,
repr\'esentant une suite de surjections de la forme
\begin{equation*}
\underline{\mathsf{3}}\xrightarrow{\tau_1}
\underline{\mathsf{2}}\xrightarrow{\tau_2}
\underline{\mathsf{2}}\xrightarrow{\tau_3}
\underline{\mathsf{1}}\xrightarrow{\tau_4}\underline{\mathsf{1}},
\end{equation*}
est donn\'e Figure~\ref{Fig:LevelTree}.
\begin{figure}[h]
\begin{equation*}
\xymatrix@W=0mm@H=0mm@R=4.5mm@C=4.5mm@M=0mm{ \ar@{-}[d]\ar@{.}[r] & \ar@{-}[dr]\ar@{.}[rr] && \ar@{-}[dl]  \\
\ar@{-}[d]\ar@{.}[rr] && \ar@{-}[d]\ar@{.}[r] & \\
\ar@{-}[dr]\ar@{.}[rr] && \ar@{-}[dl]\ar@{.}[r] & \\
\ar@{.}[r] & \ar@{-}[d]\ar@{.}[rr] && \\
\ar@{.}[rrr] &&& }
\end{equation*}
\caption{}\label{Fig:LevelTree}\end{figure}

Un morphisme de $\Omega_n^{epi}$
\begin{equation*}
\{\tset_0\xrightarrow{\tau_1}\tset_1\xrightarrow{\tau_2}\cdots
\xrightarrow{\tau_n}\tset_n\}
\xrightarrow{u}\{\sset_0\xrightarrow{\sigma_1}\sset_1\xrightarrow{\sigma_2}\cdots
\xrightarrow{\sigma_n}\sset_n\}
\end{equation*}
est une suite d'applications surjectives $u: \tset_i\rightarrow\sset_i$
telles que le diagramme
\begin{equation*}
\xymatrix{ \tset_0\ar@{.>}[d]_{u}\ar[r]^{\tau_1} &
\tset_1\ar@{.>}[d]_{u}\ar[r]^{\tau_2} &
\cdots\ar[r]^{\tau_n} &
\tset_n\ar@{.>}[d]_{u} \\
\sset_0\ar[r]^{\sigma_1} &
\sset_1\ar[r]^{\sigma_2} &
\cdots\ar[r]^{\sigma_n} &
\sset_n }
\end{equation*}
commute
et $u: \tset_i\rightarrow\sset_i$
est croissante sur chaque sous-ensemble $\tau_{i+1}^{-1}(x)\subset\tset_i$,
pour~$x\in\tset_{i+1}$.

On utilise la notation $\In\tauobj$ pour d\'esigner l'ensemble source $\In\tauobj = \tset_0$
de la premi\`ere surjection d'un objet $\tauobj\in\Omega_n^{epi}$.
L'application $\In: \tauobj\mapsto\In\tauobj$
d\'efinit un foncteur de $\Omega_n^{epi}$
dans la cat\'egorie $\Surj$
constitu\'ee des ensembles finis avec les applications surjectives comme morphismes.

\subsubsection{Le poset des arbres \'elagu\'es avec une source fix\'ee}\label{PrunedTrees:CommaPoset}
On utilise dans~\cite{FresseIteratedBar} la cat\'egorie comma $\eset/\Omega_n^{epi}$
(not\'ee $\Pi^n(\eset)$ dans~\cite[\S A]{FresseIteratedBar}),
associ\'ee \`a chaque ensemble fini~$\eset$,
et dont les objets sont les suites
\begin{equation*}
\eset\xrightarrow{\tau_0}\tset_0\xrightarrow{\tau_1}\cdots\xrightarrow{\tau_n}\tset_n = *
\end{equation*}
telles que $\tauobj = \{\tset_0\xrightarrow{\tau_1}\cdots\xrightarrow{\tau_n}\tset_n\}$
d\'efinit un \'el\'ement de $\Omega_n^{epi}$
et $\{\eset\xrightarrow{\tau_0}\tset_0\}$
d\'efinit un morphisme $\tau_0: \eset\rightarrow\In\tauobj$
dans la cat\'egorie des surjections~$\Surj$.
L'application $\tau_0: \eset\rightarrow\tset_0$
se repr\'esente dans l'arbre associ\'e \`a $\tauobj$
par un \'etiquetage des noeuds de niveau $0$
par les sous-ensembles $\tau_0^{-1}(x)$, pour $x\in\tset_0$, comme dans l'exemple de la Figure~\ref{Fig:LabelLevelTree}.
\begin{figure}[h]
\begin{equation*}
\xymatrix@W=0mm@H=0mm@R=4.5mm@C=4.5mm@M=0mm{ \ar@{-}[d]\ar@{.}[r]^<{\displaystyle e} &
\ar@{-}[dr]\ar@{.}[rr]^<{\displaystyle b\,c}^>{\displaystyle a\,d} && \ar@{-}[dl]  \\
\ar@{-}[d]\ar@{.}[rr] && \ar@{-}[d]\ar@{.}[r] & \\
\ar@{-}[dr]\ar@{.}[rr] && \ar@{-}[dl]\ar@{.}[r] & \\
\ar@{.}[r] & \ar@{-}[d]\ar@{.}[rr] && \\
\ar@{.}[rrr] &&& }
\end{equation*}
\caption{Un arbre \'elagu\'e \'etiquet\'e.
Dans cet exemple, on a $\eset = \{a,b,c,d,e\}$, et la surjection $\tau_0: \eset\rightarrow\tset_0$
est d\'efinie par $\tau_0(e) = 1$, $\tau_0(b) = \tau_0(c) = 2$ et $\tau_0(a) = \tau_0(d) = 3$.}\label{Fig:LabelLevelTree}\end{figure}
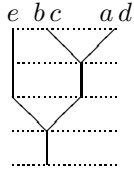

Une suite d'applications surjectives $u: \tset_i\rightarrow\sset_i$
d\'efinit un morphisme dans la cat\'egorie comma
\begin{equation*}
\{\eset\xrightarrow{\tau_0}\tset_0\xrightarrow{\tau_1}\tset_1\xrightarrow{\tau_2}\cdots
\xrightarrow{\tau_n}\tset_n\}
\xrightarrow{u}\{\eset\xrightarrow{\sigma_0}\sset_0\xrightarrow{\sigma_1}\sset_1\xrightarrow{\sigma_2}\cdots
\xrightarrow{\sigma_n}\sset_n\}
\end{equation*}
si chaque application $u: \tset_i\rightarrow\sset_i$
est croissante sur les sous-ensembles $\tau_{i+1}^{-1}(x)\subset\tset_i$, $x\in\tset_{i+1}$ (condition pour avoir un morphisme dans $\Omega_n^{epi}$),
et le diagramme
\begin{equation*}
\xymatrix{ \eset\ar[d]_{=}\ar[r]^{\tau_0} &
\tset_0\ar@{.>}[d]_{u}\ar[r]^{\tau_1} &
\tset_1\ar@{.>}[d]_{u}\ar[r]^{\tau_2} &
\cdots\ar[r]^{\tau_n} &
\tset_n\ar@{.>}[d]_{u} \\
\eset\ar[r]^{\sigma_0} &
\sset_0\ar[r]^{\sigma_1} &
\sset_1\ar[r]^{\sigma_2} &
\cdots\ar[r]^{\sigma_n} &
\sset_n }
\end{equation*}
commute dans son ensemble.

On montre dans~\cite[\S A.7]{FresseIteratedBar} que $\eset/\Omega_n^{epi}$ forme un poset~:
l'ensemble des morphismes dans~$\eset/\Omega_n^{epi}$
d'un objet source $\{\eset\xrightarrow{\tau_0}\tset_0\xrightarrow{\tau_1}\cdots\xrightarrow{\tau_n}\tset_n\}$
vers un objet but donn\'e $\{\eset\xrightarrow{\sigma_0}\sset_0\xrightarrow{\sigma_1}\cdots\xrightarrow{\sigma_n}\sset_n\}$
est soit vide,
soit r\'eduit \`a un unique \'el\'ement,
et dans ce dernier cas on \'ecrit
\begin{equation*}
\{\eset\xrightarrow{\tau_0}\tset_0\xrightarrow{\tau_1}\cdots\xrightarrow{\tau_n}\tset_n\}
\geq\{\eset\xrightarrow{\sigma_0}\sset_0\xrightarrow{\sigma_1}\cdots\xrightarrow{\sigma_n}\sset_n\}.
\end{equation*}
Ce r\'esultat entraine qu'un morphisme $u: \tauobj\rightarrow\sigmaobj$
de la cat\'egorie $\Omega_n^{epi}$
est enti\`erement determin\'e par la donn\'ee~:
\begin{itemize}
\item
d'un objet source $\tauobj = \{\tset_0\xrightarrow{\tau_1}\cdots\xrightarrow{\tau_n}\tset_n\}$,
\item
d'un objet but $\sigmaobj = \{\sset_0\xrightarrow{\sigma_1}\cdots\xrightarrow{\sigma_n}\sset_n\}$,
\item
et d'une application $u: \In\tauobj\rightarrow\In\sigmaobj$,
\end{itemize}
de sorte qu'on a la relation
\begin{equation*}
\{\In\tauobj\xrightarrow{=}\tset_0\xrightarrow{\tau_1}\cdots\xrightarrow{\tau_n}\tset_n\}
\geq\{\In\tauobj\xrightarrow{u}\sset_0\xrightarrow{\sigma_1}\cdots\xrightarrow{\sigma_n}\sset_n\}
\end{equation*}
dans le poset $\In\tauobj/\Omega_n^{epi}$.
On utilisera cette observation pour repr\'esenter graphiquement les morphismes de~$\Omega_n^{epi}$
par la donn\'ee de l'arbre source $\tauobj$
et la donn\'ee de l'arbre but $\sigmaobj$ \'etiquet\'e par les \'el\'ements de~$\In\tauobj$,
ou de tout ensemble de variables (muettes) $\eset$
mis en bijection avec~$\In\tauobj$
(voir par exemple~\S\ref{KoszulAcyclicity}).

\subsubsection*{Remarque}
Les r\'esultats que l'on obtiendra aux~\S\S\ref{KoszulConstruction}-\ref{KoszulAcyclicity}
montrent que le poset $\eset/\Omega_n^{epi}$
est de Cohen-Macaulay.
On peut cependant observer qu'il n'admet pas de CL-shelling au sens de~\cite{BjornerWachs}
sauf lorsque $n$
ou le cardinal de $\eset$
sont petits.

\subsubsection{La graduation de la cat\'egorie des arbres \'elagu\'es}\label{PrunedTrees:Grading}
Dans nos constructions,
on utilisera de fa\c con essentielle que la cat\'egorie des arbres \'elagu\'es $\Omega_n^{epi}$
poss\`ede une graduation naturelle.
On d\'efinit d'abord le degr\'e d'un objet $\tauobj\in\Omega_n^{epi}$,
d\'efini par une suite de surjections $\tset_0\xrightarrow{\tau_1}\cdots\xrightarrow{\tau_n}\tset_n$,
en posant~:
\begin{equation*}
\deg(\tauobj) = t_0 + \dots + t_{n-1}.
\end{equation*}
Graphiquement, cette expression repr\'esente le nombre de sommets de niveau $<n$
de l'arbre repr\'esentant $\tauobj$,
ou, de fa\c con \'equivalente,
le nombre d'ar\^etes de $\tauobj$.
Le degr\'e d'un morphisme $u: \tauobj\rightarrow\sigmaobj$
est ensuite d\'efini par la diff\'erence~:
\begin{equation*}
\deg(u) = \deg(\tauobj)-\deg(\sigmaobj),
\end{equation*}
repr\'esentant le nombre de sommets de $\tauobj$
identifi\'es par~$u$.

\subsection{La construction de Koszul de la dg-cat\'egorie des arbres \'elagu\'es}\label{KoszulConstruction}
On a rappel\'e la d\'efinition de la cat\'egorie des arbres \'elagu\'es $\Omega_n^{epi}$
dans un cadre ensembliste
au~\S\ref{PrunedTrees}.
On utilise maintenant une version enrichie en $\kk$-modules
de cette cat\'egorie,
avec le m\^eme ensemble d'objets,
mais dont les hom-objets sont les $\kk$-modules librement engendr\'es par les morphismes ensemblistes de $\Omega_n^{epi}$.
On gardera la m\^eme notation $\Omega_n^{epi}$
pour d\'esigner cette cat\'egorie enrichie en $\kk$-modules.
On r\'eserve simplement la notation $\Mor_{\Omega_n^{epi}}(\tauobj,\sigmaobj)$
pour d\'esigner l'ensemble des morphismes ensemblistes de $\Omega_n^{epi}$
et on prendra la notation $\Omega_n^{epi}(\tauobj,\sigmaobj)$
pour d\'esigner le $\kk$-module des homomorphismes de la cat\'egorie $\Omega_n^{epi}$
dans sa version enrichie.
L'expression $u: \tauobj\rightarrow\sigmaobj$
ne sera utilis\'ee que pour d\'esigner des morphismes ensemblistes $u\in\Mor_{\Omega_n^{epi}}(\tauobj,\sigmaobj)$.
On a donc l'identit\'e~:
\begin{equation*}
\Omega_n^{epi}(\tauobj,\sigmaobj)
= \bigoplus_{u: \tauobj\rightarrow\sigmaobj} \kk\{u\},
\end{equation*}
en notant $\{u\}$ les \'el\'ements g\'en\'erateurs de $\Omega_n^{epi}(\tauobj,\sigmaobj)$
associ\'es aux morphismes ensemblistes $u: \tauobj\rightarrow\sigmaobj$.

On montrera plus loin comment les constructions de l'alg\`ebre diff\'erentielle gradu\'ee
s'appliquent \`a cette cat\'egorie enrichie en $\kk$-modules $\Omega_n^{epi}$,
que l'on voit elle-m\^eme
comme une dg-cat\'egorie concentr\'ee en degr\'e $0$.
On utilisera en particulier une construction bar naturellement associ\'ee
\`a toute dg-cat\'egorie $\Theta$ qui a $\Ob\Theta = \Ob\Omega_n^{epi}$
comme ensemble d'objet.
On expliquera que cette construction bar forme une dg-cocat\'egorie.

On se contente pour le moment de consid\'erer des structures,
appel\'ees dg-graphes,
d\'efinies par des collections de dg-modules~$\Gamma(\tauobj,\sigmaobj)$
index\'ees par les couples $(\tauobj,\sigmaobj)\in\Ob\Omega_n^{epi}\times\Ob\Omega_n^{epi}$.

La construction bar (normalis\'ee r\'eduite) de~$\Omega_n^{epi}$
est le dg-graphe $B(\Omega_n^{epi})(\tauobj,\sigmaobj)$
engendr\'e en degr\'e~$d$
par les $d$-uplets de morphismes composables
\begin{equation*}
\{\tauobj_0\xleftarrow{u_1}\cdots\xleftarrow{u_d}\tauobj_d\}
\in\Mor_{\Omega_n^{epi}}(\tauobj_1,\tauobj_0)\times\dots\times\Mor_{\Omega_n^{epi}}(\tauobj_d,\tauobj_{d-1})
\end{equation*}
satisfaisant la condition de non-d\'eg\'en\'erescence~$u_i\not=\id$, $\forall i$,
et tels que~$(\tauobj_d,\tauobj_0) = (\tauobj,\sigmaobj)$,
avec la diff\'erentielle
d\'efinie par la formule usuelle
\begin{equation*}
\partial\{\tauobj_0\xleftarrow{u_1}\cdots\xleftarrow{u_d}\tauobj_d\}
= \sum_{i=1}^{d-1} (-1)^i\{\tauobj_0\xleftarrow{u_1}\cdots
\xleftarrow{u_i u_{i+1}}\cdots
\xleftarrow{u_d}\tauobj_d\},
\end{equation*}
pour tout \'el\'ement $\{\tauobj_0\xleftarrow{u_1}\cdots\xleftarrow{u_d}\tauobj_d\}\in B(\Omega_n^{epi})$.
La construction bar d\'efinit un complexe
au sens du~\S\ref{Background:DGModuleComplexes}.

Le but principal de cette section est de construire un sous-objet de $B(\Omega_n^{epi})$,
la construction de Koszul $K(\Omega_n^{epi})$,
dont les \'el\'ements repr\'esenteront des cycles de degr\'e maximal
dans les diff\'erentes composantes de la construction bar~$B(\Omega_n^{epi})(\tauobj,\sigmaobj)$.

On verra plus loin que $K(\Omega_n^{epi})$ forme un sous-objet de $B(\Omega_n^{epi})$
dans la cat\'egorie des dg-cocat\'egories.
Cette cocat\'egorie $K(\Omega_n^{epi})$ est un analogue, dans le cadre des cat\'egories,
du dual de Koszul des alg\`ebres associatives
d\'efini dans~\cite{Priddy}.
La m\'ethode habituelle pour construire le dual de Koszul d'une alg\`ebre $A$
consiste \`a d\'eterminer une pr\'esentation de $A$
par g\'en\'erateurs et relations,
puis \`a prendre des relations orthogonales pour d\'efinir la cog\`ebre $K(A)$.
Dans notre cas, il sera plus simple de construire la cocat\'egorie $K(\Omega_n^{epi})$
de fa\c con directe.
L'identification de $K(\Omega_n^{epi})$
avec un dual de Koszul r\'esultera alors de l'identit\'e, \'etablie au Th\'eor\`eme~\ref{PrunedTreeCobar:KoszulModel},
d'une construction cobar $B^c(K(\Omega_n^{epi}))$ avec un mod\`ele minimal de $\Omega_n^{epi}$.
On appliquera en fait la dualit\'e
de Koszul en sens inverse
pour obtenir une pr\'esentation par g\'en\'erateurs et relations de $\Omega_n^{epi}$
\`a partir de ce r\'esultat.

On s'appuie n\'eanmoins sur la graduation naturelle de $\Omega_n^{epi}$
pour d\'efinir~$K(\Omega_n^{epi})$,
comme dans la th\'eorie classique de la dualit\'e de Koszul des alg\`ebres~\cite{Priddy}.
On commence par r\'eviser la structure des morphismes de degr\'e $1$
de la cat\'egorie des arbres \`a niveaux.

\subsubsection{Les morphismes d'arbres \'elagu\'es de degr\'e $1$}\label{KoszulConstruction:IndecomposableMorphisms}
Les observations de~\cite[Lemma 3.4]{LivernetRichter} (voir \'egalement~\cite[\S A.8]{FresseIteratedBar})
impliquent que les morphismes de degr\'e $1$
de $\Omega_n^{epi}$
sont donn\'es par des diagrammes de la forme
\begin{equation*}
\xymatrix{ \tset_0\ar[r]^{\tau_1}\ar[d]_{\sh_0} &
\cdots\ar[r]^{\tau_{k-1}} &
\tset_{k-1}\ar[r]^{\tau_k}\ar[d]_{\sh_{k-1}} &
\tset_{k}\ar[r]^{\tau_{k+1}}\ar[d]_{d_a} &
\tset_{k+1}\ar[r]^{\tau_{k+2}}\ar[d]_{\id} &
\cdots\ar[r]^{\tau_n} &
\tset_n\ar[d]_{\id} \\
\tset_0\ar[r]^{\sigma_1} &
\cdots\ar[r]^{\sigma_{k-1}} &
\tset_{k-1}\ar[r]^{\sigma_k} &
\underline{\mathsf{t_{\mathit{k}}-1}}\ar[r]^{\sigma_{k+1}} &
\tset_{k+1}\ar[r]^{\sigma_{k+2}} &
\cdots\ar[r]^{\sigma_n} &
\tset_n }
\end{equation*}
et que l'on construit de la fa\c con suivante~:
\begin{enumerate}\renewcommand{\theenumi}{\arabic{enumi}}
\item\label{IndecomposableMorphism:Bottom}
l'application $u_i: \tset_i\rightarrow\sset_i$ est l'identit\'e pour $i>k$,
ce qui implique la relation $\sigma_i = \tau_i$ pour $i>k+1$~;
\item\label{IndecomposableMorphism:VertexMerging}
l'application $d_a$ identifie deux sommets cons\'ecutifs $(a,a+1)$ d'une fibre $\tau_{k+1}^{-1}(b)$ au niveau $k$,
de sorte que l'on a
\begin{align*}
d_a(x) & = \begin{cases} x, & \text{pour $x = 1,\dots,a$}, \\ x-1, & \text{pour $x = a+1,\dots,s_k$}, \end{cases} \\
\text{et n\'ecessairement}
\quad\sigma_{k+1}(x) & = \begin{cases} \tau_{k+1}(x), & \text{pour $x = 1,\dots,a$}, \\ \tau_{k+1}(x+1), & \text{pour $x = a+1,\dots,t_k-1$}\ ; \end{cases}
\end{align*}
\item\label{IndecomposableMorphism:LegShuffle}
puis, lorsque $k>0$, on a n\'ecessairement
\begin{equation*}
\sigma_k^{-1}(x) = \begin{cases} \tau_k^{-1}(x), & \text{pour $x = 1,\dots,a-1$}, \\
\tau_k^{-1}(a)\cup\tau_k^{-1}(a+1), & \text{pour $x = a$}, \\
\tau_k^{-1}(x), & \text{pour $x = a+2,\dots,s_{k+1}$},
\end{cases}
\end{equation*}
ce qui d\'etermine $\sigma_k$~;
l'application $\sh_{k-1}$ au niveau $k-1$
est la juxtaposition des applications identiques sur les fibres $\tau_k^{-1}(x)$, $x\not=a,a+1$,
avec une bijection
\begin{equation*}
\tau_k^{-1}(a)\amalg\tau_k^{-1}(a+1)\xrightarrow{\simeq}\sigma_k^{-1}(a)
\end{equation*}
pr\'eservant l'ordre entre les \'el\'ements de $\tau_k^{-1}(a)$ et $\tau_k^{-1}(a+1)$~
(en d'autres termes, cette application est une permutation de battage)~;
\item\label{IndecomposableMorphism:FiberShuffle}
les applications $\sigma_i: \tset_{i-1}\rightarrow\tset_i$ et les bijections $\sh_{i-1}: \tset_{i-1}\rightarrow\tset_{i-1}$
sont ensuite d\'etermin\'ees inductivement~;
d'abord $\sigma_i$,
par les relations $\sigma_i^{-1}(x) = \tau_i^{-1}(\sh_i^{-1}(x))$, $x\in\tset_{i}$~;
puis $\sh_{i-1}$,
par la relation $\sigma_i\sh_{i-1} = \sh_i\tau_i$
et la propri\'et\'e de croissance sur les fibres $\tau_i^{-1}(x)$, $x\in\tset_{i}$.
\end{enumerate}
Un morphisme de degr\'e $1$
est donc enti\`erement d\'etermin\'e, \`a partir de son domaine $\tauobj$,
par la fusion de deux sommets cons\'ecutifs $(a,a+1)$
sur une fibre $\tau_{k+1}^{-1}(b)$ de l'arbre $\tauobj$
et, lorsque $k>0$, par une bijection
\begin{equation*}
\sh_{k-1}: \tau_k^{-1}(a)\amalg\tau_k^{-1}(a+1)\xrightarrow{\simeq}\sigma_k^{-1}(a)
\end{equation*}
m\'elangeant les fibres de ces sommets.

La Figure~\ref{Fig:LevelTreeMorphism}
donne la repr\'esentation graphique sch\'ematique
d'un tel morphisme de degr\'e $1$.
La notation $\alpha$ (respectivement $\beta$)
repr\'esente l'ensemble du sous-arbre au dessus du sommet fusionn\'e $a$ (respectivement, $b = a+1$).
L'observation~(\ref{IndecomposableMorphism:FiberShuffle})
de la construction formelle donn\'ee dans ce paragraphe
signifie graphiquement que l'ensemble du sous-arbre au dessus d'un sommet $x\in\tau_k^{-1}(a)\cup\tau_k^{-1}(b)$
est d\'eplac\'e avec $x$.
Le morphisme se comprend donc globalement comme une fusion des sommets $a$ et $b = a+1$
et d'une permutation de battage des sous-arbres au dessus des sommets $x\in\tau_k^{-1}(a)\cup\tau_k^{-1}(b)$
que repr\'esente l'expression $\sh(\alpha,\beta)$ dans la figure.
\begin{figure}[h]
\begin{equation*}
\vcenter{\xymatrix@W=0mm@H=0mm@R=4.5mm@C=4.5mm@M=0mm{ \ar@{}[d]|{\displaystyle\cdots} &
\ar@{-}[dr] & \ar@{}[d]|(0.3){\displaystyle\alpha} & \ar@{-}[dl]
&& \ar@{-}[dr] & \ar@{}[d]|(0.3){\displaystyle\beta} & \ar@{-}[dl] &
\ar@{}[d]|{\displaystyle\cdots} \\
\ar@{-}[drrrr] && \bullet\ar@{-}[drr]\ar@{.}[rr]\ar@{.}[ll] &&&&
\bullet\ar@{-}[dll]\ar@{.}[rr]\ar@{.}[ll] && \ar@{-}[dllll] \\
\ar@{.}[rrrr] &&&& \ar@{}[d]|{\vdots} &&&& \ar@{.}[llll] \\
&&&& &&&& }}
\quad\xrightarrow{u}
\quad\vcenter{\xymatrix@W=0mm@H=0mm@R=4.5mm@C=4.5mm@M=0mm{ \ar@{}[d]|{\displaystyle\cdots} &
\ar@{-}[drrr] &&& \ar@{}[d]|(0.1){\displaystyle\sh(\alpha,\beta)} &&& \ar@{-}[dlll] &
\ar@{}[d]|{\displaystyle\cdots} \\
\ar@{-}[drrrr] &&&& \bullet\ar@{-}[d]\ar@{.}[rrrr]\ar@{.}[llll] &&&& \ar@{-}[dllll] \\
\ar@{.}[rrrr] &&&& \ar@{}[d]|{\vdots} &&&& \ar@{.}[llll] \\
&&&& &&&& }}
\end{equation*}
\caption{}\label{Fig:LevelTreeMorphism}\end{figure}

Ces morphismes de degr\'e $1$ correspondent dans le formalisme de~\cite[\S A.8]{FresseIteratedBar}
aux relations de recouvrement du poset $\eset/\Omega_n^{epi}$.

\subsubsection{Signe des morphismes de degr\'e $1$}\label{KoszulConstruction:Signs}
Les morphismes de degr\'e $1$, avec un signe $\sgn(u)$ associ\'e \`a chaque morphisme $u$,
d\'eterminent les termes de la diff\'erentielle du complexe bar $n$-it\'er\'e $B^n(A)$
d'une alg\`ebre commutative $A$ (voir~\cite[Proposition A.10]{FresseIteratedBar}).
On utilise la relation de la cat\'egorie $\Omega_n^{epi}$
avec le complexe bar $n$-it\'er\'e $B^n(A)$
pour d\'efinir le signe $\sgn(u)$ associ\'e \`a chaque morphisme $u$
de fa\c con indirecte dans~\cite{FresseIteratedBar},
en renvoyant aux d\'efinitions de~\cite[\S X.12]{MacLane}
pour la construction bar des alg\`ebres commutatives.
Le but de ce paragraphe est de donner une d\'efinition directe de ce signe
et dans un contexte g\'en\'eralis\'e.

On d\'efinit en fait un signe pour chaque morphisme de $\eset/\Omega_n^{epi}$,
pour un ensemble d'entr\'ees donn\'e $\eset$,
avec un degr\'e $\deg(e)$ associ\'e \`a chaque $e\in\eset$.
Cette convention interviendra au~\S\ref{KoszulAcyclicity}.
On pourra supposer que ce degr\'e est nul jusque l\`a, ce que l'on fera tacitement
dans les constructions qui suivront ce paragraphe,
et on appliquera la d\'efinition de $\sgn(u)$ aux morphismes $u: \tauobj\rightarrow\sigmaobj$
de $\Omega_n^{epi}$
en prenant $\eset = \In\tauobj$
comme ensemble d'entr\'ees.

On se donne un objet $\tauobj\in\eset/\Omega_n^{epi}$.
On associe un symbole de degr\'e $1$ \`a chaque sommet $x\in\tset_i$
de niveau $i<n$ de $\tauobj$
et on suppose que les entr\'ees $e\in\eset$ de $\tauobj\in\eset/\Omega_n^{epi}$
sont munies du degr\'e $\deg(e)$
qui leur est associ\'e.
On ordonne ces \'el\'ements en parcourant l'arbre sommet par sommet,
en suivant les ar\^etes du bas vers le haut,
puis de gauche \`a droite,
pour former un tenseur diff\'erentiel gradu\'e associ\'e \`a $\tauobj$.
On fixera simplement un ordre arbitraire pour les entr\'ees $e\in\eset$
sur une m\^eme fibre $\tau_0^{-1}(k)$, $k\in\tset_0$,
qui peuvent toujours \^etre permut\'ees dans la suite.
Pour l'arbre de la Figure~\ref{Fig:LabelLevelTree},
on obtient ainsi l'ordre
\begin{equation*}
\xymatrix@W=0mm@H=0mm@R=4.5mm@C=4.5mm@M=0mm{ &&& \\
*+<2pt>{x_4}\ar@{-}[d]\ar@{.}[r]\ar@{}[u]|(0.6){\displaystyle\underline{e_5}} &
*+<2pt>{x_8}\ar@{-}[dr]\ar@{.}[rr]\ar@{}[u]|(0.6){\displaystyle\underline{b_{9}\,c_{10}}} &&
*+<2pt>{x_{11}}\ar@{-}[dl]\ar@{}[u]|(0.6){\displaystyle\underline{a_{12}\,d_{13}}} \\
*+<2pt>{x_3}\ar@{-}[d]\ar@{.}[rr] && *+<2pt>{x_7}\ar@{-}[d]\ar@{.}[r] & \\
*+<2pt>{x_2}\ar@{-}[dr]\ar@{.}[rr] && *+<2pt>{x_6}\ar@{-}[dl]\ar@{.}[r] & \\
\ar@{.}[r] & *+<2pt>{x_1}\ar@{-}[d]\ar@{.}[rr] && \\
\ar@{.}[rrr] &&& }
\end{equation*}
et un tenseur de la forme
\begin{equation*}
x_1\otimes x_2\otimes x_3\otimes x_4\otimes e_5\otimes x_6\otimes x_7\otimes x_8\otimes b_9\otimes c_{10}\otimes x_{11}\otimes a_{12}\otimes d_{13}.
\end{equation*}

On se donne un morphisme $u: \tauobj\rightarrow\sigmaobj$
de degr\'e $1$
fusionnant des sommets $(x_i,x_j)$ dans $\tauobj$.
Le signe $\sgn(u)$ se d\'etermine par l'application des r\`egles de l'alg\`ebre diff\'e\-ren\-tielle gradu\'ee
aux permutations de tenseurs impliqu\'ees dans l'op\'eration de fusion d\'efinie par~$u$.
De fait, cette op\'eration de fusion se d\'ecompose comme suit~:
\begin{enumerate}
\item
on part du tenseur d\'efini par l'ordre de l'arbre source $\tauobj$,
\item\label{MergingOrdering}
on place les sommets $(x_i,x_j)$ en premi\`ere position par une permutation de tenseurs,
\item
on effectue la fusion,
\item\label{MergedReordering}
puis on effectue une nouvelle permutation de tenseurs
pour replacer les facteurs du produit tensoriel
dans l'ordre fix\'e par l'arbre but $\sigmaobj$.
\end{enumerate}
Le signe $\sgn(u)$ est produit par les op\'erations~(\ref{MergingOrdering})
et~(\ref{MergedReordering}).

Le signe associ\'e au morphisme
\begin{equation*}
\vcenter{\xymatrix@W=0mm@H=0mm@R=4.5mm@C=4.5mm@M=0mm{ &&& \\
*+<2pt>{x_4}\ar@{-}[d]\ar@{.}[r]\ar@{}[u]|(0.6){\displaystyle\underline{e_5}} &
*+<2pt>{x_8}\ar@{-}[dr]\ar@{.}[rr]\ar@{}[u]|(0.6){\displaystyle\underline{b_{9}\,c_{10}}} &&
*+<2pt>{x_{11}}\ar@{-}[dl]\ar@{}[u]|(0.6){\displaystyle\underline{a_{12}\,d_{13}}} \\
*+<2pt>{x_3}\ar@{-}[d]\ar@{.}[rr] && *+<2pt>{x_7}\ar@{-}[d]\ar@{.}[r] & \\
*+<2pt>{x_2}\ar@{-}[dr]\ar@{.}[rr] && *+<2pt>{x_6}\ar@{-}[dl]\ar@{.}[r] & \\
\ar@{.}[r] & *+<2pt>{x_1}\ar@{-}[d]\ar@{.}[rr] && \\
\ar@{.}[rrr] &&& }}
\mapsto\vcenter{\xymatrix@W=0mm@H=0mm@R=4.5mm@C=4.5mm@M=0mm{ &&& \\
*+<2pt>{x_8}\ar@{-}[dr]\ar@{.}[rr]\ar@{}[u]|(0.6){\displaystyle\underline{b_{9} c_{10}}} &&
*+<2pt>{x_{11}}\ar@{-}[dl]\ar@{.}[r]\ar@{}[u]|(0.6){\displaystyle\underline{a_{12} d_{13}}} &
*+<2pt>{x_4}\ar@{-}[d]\ar@{}[u]|(0.6){\displaystyle\underline{e_5}} \\
\ar@{.}[r] & *+<2pt>{x_7}\ar@{-}[dr]\ar@{.}[rr] && *+<2pt>{x_3}\ar@{-}[dl] \\
\ar@{.}[rr] && *+<2pt>{x_{2 6}}\ar@{-}[d]\ar@{.}[r] & \\
\ar@{.}[rr] && *+<2pt>{x_1}\ar@{-}[d]\ar@{.}[r] & \\
\ar@{.}[rrr] &&& }}
\end{equation*}
est ainsi d\'etermin\'e par les op\'erations
\begin{align*}
&&& x_1\otimes x_2\otimes x_3\otimes x_4\otimes e_5\otimes x_6
\otimes x_7\otimes x_8\otimes b_9\otimes c_{10}\otimes x_{11}\otimes a_{12}\otimes d_{13} \\
& \simeq && x_2\otimes x_6\otimes x_1\otimes x_3\otimes x_4\otimes e_5
\otimes x_7\otimes x_8\otimes b_9\otimes c_{10}\otimes x_{11}\otimes a_{12}\otimes d_{13} \\
& \mapsto && x_{2 6}\otimes x_1\otimes x_3\otimes x_4\otimes e_5
\otimes x_7\otimes x_8\otimes b_9\otimes c_{10}\otimes x_{11}\otimes a_{12}\otimes d_{13} \\
& \simeq && x_1\otimes x_{2 6}\otimes x_7\otimes x_8\otimes b_9\otimes c_{10}
\otimes x_{11}\otimes a_{12}\otimes d_{13}\otimes x_3\otimes x_4\otimes e_5.
\end{align*}
La premi\`ere permutation de tenseurs produit un signe $(-1)^p$ d'exposant $p = 4+\deg(e)$,
la seconde un signe $(-1)^q$ d'exposant $q = 1+(2+\deg(e))\cdot(3+\deg(a)+\deg(b)+\deg(c)+\deg(d))$,
et on somme ces exposants pour obtenir le signe de~$u$.

\subsubsection{La construction de Koszul associ\'ee \`a la cat\'egorie $\Omega_n^{epi}$}\label{KoszulConstruction:KoszulCategory}
On forme le dg-graphe $K(\Omega_n^{epi})$
d\'efinis par les dg-modules
\begin{equation*}
K(\Omega_n^{epi})(\tauobj,\sigmaobj) = \bigoplus_{u\in\Mor_{\Omega_n^{epi}}(\tauobj,\sigmaobj)}\kk\{u\},
\end{equation*}
avec des \'el\'ements g\'en\'erateurs $\{u\}$
munis d'une diff\'erentielle triviale
et du degr\'e
\begin{equation*}
\deg\{u\} = \deg(\tauobj) - \deg(\sigmaobj)
\end{equation*}
d\'eduit de la graduation des arbres \`a niveau.
On appelle ce dg-graphe la construction de Koszul de $\Omega_n^{epi}$.

On consid\`ere l'application
\begin{equation*}
K(\Omega_n^{epi})(\tauobj,\sigmaobj)
\xrightarrow{\iota}B(\Omega_n^{epi})(\tauobj,\sigmaobj)
\end{equation*}
qui applique un \'el\'ement g\'en\'erateur $\{u\}\in K(\Omega_n^{epi})(\tauobj,\sigmaobj)$
sur la somme
\begin{equation*}
\iota\{u\} = \sum_{\substack{u = u_1\cdots u_d\\ \deg u_1 = \dots = \deg u_d = 1}}
\underbrace{\sgn(u_1)\cdots\sgn(u_d)}_{\pm}\cdot\,\bigl\{\tauobj_0\xleftarrow{u_1}\cdots\xleftarrow{u_d}\tauobj_d\bigr\}
\end{equation*}
s'\'etendant sur l'ensemble des d\'ecompositions de $u$
en morphismes de degr\'e $1$
dans la cat\'egorie ensembliste $\Omega_n^{epi}$.
On observe que~:

\begin{prop}\label{KoszulConstruction:KoszulEmbedding}
On a $\partial\iota\{u\} = 0$ dans $B(\Omega_n^{epi})$,
de sorte que notre application
d\'efinit un morphisme de dg-modules
\begin{equation*}
K(\Omega_n^{epi})(\tauobj,\sigmaobj)\xrightarrow{\iota}B(\Omega_n^{epi})(\tauobj,\sigmaobj),
\end{equation*}
pour tout couple $(\tauobj,\sigmaobj)\in\Ob\Omega_n^{epi}\times\Ob\Omega_n^{epi}$.
\end{prop}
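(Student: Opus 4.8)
The plan is to show that $\partial\iota\{u\}=0$ by a cancellation argument on the bar differential, exploiting the fact that the coefficients $\sgn(u_1)\cdots\sgn(u_d)$ are precisely the signs arising in the iterated bar complex. First I would fix a morphisme $u:\tauobj\rightarrow\sigmaobj$ of degree $d=\deg(\tauobj)-\deg(\sigmaobj)$ and compute $\partial\iota\{u\}$ explicitly. Applying the bar differential to a length-$d$ chain $\{\tauobj_0\xleftarrow{u_1}\cdots\xleftarrow{u_d}\tauobj_d\}$ produces the internal compositions $\sum_{i=1}^{d-1}(-1)^i\{\cdots\xleftarrow{u_iu_{i+1}}\cdots\}$. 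Since $\iota\{u\}$ ranges over all factorizations $u=u_1\cdots u_d$ into morphisms of degree $1$, the result of $\partial\iota\{u\}$ is a sum indexed by factorizations of the shorter form $u=v_1\cdots v_{d-1}$ where exactly one factor $v_j$ has degree $2$ and all others have degree $1$. The key point is then to show that for each such intermediate factorization, the contributions coming from the two ways of refining the degree-$2$ factor $v_j=u_iu_{i+1}$ cancel in pairs.

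The heart of the argument is therefore a purely local statement about morphisms of degree $2$: \emph{every morphism $w:\tauobj'\rightarrow\tauobj''$ of degree $2$ admits exactly two factorizations $w=u'u''$ into morphisms of degree $1$, and the associated signs $\sgn(u')\sgn(u'')$ are opposite for the two factorizations.} I would establish this by appealing to the explicit description of degree-$1$ morphisms given in~\S\ref{KoszulConstruction:IndecomposableMorphisms}: such a morphism merges a pair of consecutive vertices $(a,a+1)$ on some fiber, possibly followed by a shuffle of the subtrees above. A degree-$2$ morphism merges two pairs of vertices (or collapses a fiber of three consecutive vertices into one), and the two factorizations correspond to performing these two elementary mergings in either order. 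The sign computation of~\S\ref{KoszulConstruction:Signs}, which reads $\sgn(u)$ off the Koszul signs of tensor permutations in the iterated bar complex, then forces the two orderings to differ by the transposition of the two "degree-$1$ symbols" attached to the merged vertices, producing the opposite sign.

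Concretely I would organize the cancellation as follows: for a fixed intermediate chain $\{\tauobj_0\xleftarrow{v_1}\cdots\xleftarrow{v_{d-1}}\tauobj_{d-1}\}$ with $\deg v_j=2$, the two refinements that contribute to it (splitting $v_j$ as $u_iu_{i+1}$ in the two possible ways) appear in $\partial\iota\{u\}$ with the common bar sign $(-1)^j$, but with Koszul coefficients $\sgn(u')\sgn(u'')$ and $\sgn(u'')\sgn(u')$ computed along the two orderings, which the local lemma shows are negatives of one another. Hence these two terms cancel, and summing over all intermediate chains and all positions $j$ yields $\partial\iota\{u\}=0$.

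\emph{Expected main obstacle.} The delicate step is verifying that the two factorizations of a degree-$2$ morphism really do carry opposite signs in \emph{all} cases, since the sign rule of~\S\ref{KoszulConstruction:Signs} involves the degrees $\deg(e)$ of the inputs and the possibly nontrivial shuffles $\sh_{k-1}$ of subtrees. I would need to check that when the two elementary mergings occur at different levels, or when they share a common merged vertex (the case of three consecutive vertices on one fiber), the bookkeeping of the reordering permutations in~(\ref{MergingOrdering})--(\ref{MergedReordering}) still produces exactly a single transposition of total-degree blocks, hence a genuine sign reversal. This is essentially the associativity/Koszul-sign coherence underlying the bar differential of~\cite[\S X.12]{MacLane} and~\cite[Proposition A.10]{FresseIteratedBar}, so I expect the cleanest route is to reduce the claim directly to the already-known relation $\partial^2=0$ in the iterated bar complex $B^n(A)$ of a free commutative algebra, transporting it through the correspondence between degree-$1$ morphisms of $\Omega_n^{epi}$ and the terms of the iterated bar differential.
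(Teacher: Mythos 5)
Your core strategy is the same as the paper's: reduce $\partial\iota\{u\}=0$ to the local statement that every degree-$2$ morphism of $\Omega_n^{epi}$ has exactly two factorizations into degree-$1$ morphisms carrying opposite signs, then cancel the terms of $\partial\iota\{u\}$ pairwise over chains in which one adjacent pair has been composed. The divergence is in how the sign statement gets verified. The paper does it by hand: it classifies degree-$2$ morphisms into three configurations --- three consecutive vertices merged on one fiber (Figure~\ref{Fig:ConsecutiveVertexMerging}), two disjoint pairs merged (Figure~\ref{Fig:DisjointVertexMerging}), and the nested case where the second merged pair lies in the fibers of the first (Figure~\ref{Fig:OrderedVertexMerging}) --- and checks by direct inspection of the permutation rules of~\S\ref{KoszulConstruction:Signs} that the two factorizations carry opposite signs in each configuration. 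Your proposed shortcut, transporting $\partial^2=0$ from the iterated bar complex $B^n(A)$ of a free commutative algebra via~\cite[Proposition A.10]{FresseIteratedBar}, is viable and in fact bypasses the need to know there are \emph{exactly} two factorizations: for the cancellation it suffices that the signed sum over \emph{all} degree-$1$ factorizations of each degree-$2$ morphism vanishes, and this follows from $\partial^2=0$ once one notes that distinct morphisms of $\Omega_n^{epi}$ induce linearly independent operations on a free commutative algebra (a morphism is determined by its underlying map on inputs, by the poset property of~\S\ref{PrunedTrees:CommaPoset}). What that route costs is a separate verification that the direct sign rule of~\S\ref{KoszulConstruction:Signs} agrees with the signs of the iterated bar differential of~\cite{FresseIteratedBar}, which is essentially the same bookkeeping in different clothing; the paper's explicit check has the further benefit of exhibiting the three types of quadratic relations that are reused later (in the Corollaire following the Th\'eor\`eme~\ref{PrunedTreeCobar:KoszulModel}). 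One caveat to fix in your sketch: your description of the two factorizations as ``performing the two elementary mergings in either order'' fails in the nested configuration of Figure~\ref{Fig:OrderedVertexMerging}. There the vertices $c\in\tau_k^{-1}(a)$ and $d\in\tau_k^{-1}(b)$ lie on distinct fibers of the source, so neither factorization can merge them first; both factorizations perform the level-$k$ merging first and differ only in the shuffle chosen (placing the subtree of $c$ before that of $d$, or the reverse), after which the children are merged. Your enumeration of delicate cases (``different levels'' or ``three consecutive vertices'') should name this configuration explicitly, since it is precisely the one where the count of factorizations and the sign reversal are least apparent.
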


\begin{proof}
On prouve que tout morphisme de degr\'e $2$
\begin{equation*}
\tauobj\xrightarrow{u}\sigmaobj
\end{equation*}
poss\`ede exactement deux d\'ecompositions
\begin{equation*}
\xymatrix{ & \thetaobj_1\ar@{.>}[dr]^{v_1} & \\
\tauobj\ar@{.>}[ur]^{w_1}\ar@{.>}[dr]_{w_2}\ar[rr]^{u} && \sigmaobj \\
& \thetaobj_2\ar@{.>}[ur]_{v_2} & }
\end{equation*}
avec $\deg v_1 = \deg v_2 = \deg w_1 = \deg w_2 = 1$,
puis on constate que les signes
\begin{equation*}
\sgn(v_1)\sgn(w_1)\quad\text{et}\quad\sgn(v_2)\sgn(w_2)
\end{equation*}
associ\'es \`a ces d\'ecompositions sont oppos\'es.
Ceci suffit pour montrer que les termes du d\'eve\-lop\-pement
\begin{equation*}
\partial\iota\{u\} = \sum_{\substack{u = u_1\cdots u_d\\ \deg u_1 = \dots = \deg u_d = 1\\1\leq i\leq d-1}}
\underbrace{\sgn(u_1)\cdots\sgn(u_d)}_{\pm}
\cdot\,\bigl\{\tauobj_0\xleftarrow{u_1}\cdots\xleftarrow{u_{i} u_{i+1}}\cdots
\xleftarrow{u_d}\tauobj_d\bigr\}
\end{equation*}
d'un morphisme $u$ de degr\'e $d$ quelconque s'annulent deux \`a deux.

On g\'en\'eralise l'analyse du~\S\ref{KoszulConstruction:IndecomposableMorphisms}
pour d\'eterminer un morphisme de degr\'e $2$
\`a partir de son domaine $\tauobj$
et des sommets de $\tauobj$ identifi\'es par $u$.
On distingue trois cas.

\begin{figure}[t]
\begin{equation*}
\xymatrix@W=0mm@H=0mm@R=4.5mm@C=3mm@M=0mm{
&&&&&& &&&&&& 
&& %
\save[].[rrrrrrrrrrddd]!C="g21" *+<16pt>[F-,]\frm{}\restore 
\ar@{}[d]|{\displaystyle\cdots}
& \ar@{-}[drr] && \ar@{}[d]|<{\sh(\alpha,\beta)} && \ar@{-}[dll] &
& \ar@{-}[dr] & \ar@{}[d]|(0.3){\gamma} & \ar@{-}[dl] &
\ar@{}[d]|{\displaystyle\cdots}
&& %
&&&& &&&& 
\\
&&&&&& &&&&&& 
&& %
\ar@{-}[drrrrrr] 
&&& \bullet\ar@{-}[drrr]\ar@{.}[rrr]\ar@{.}[lll] &&&
&& \bullet\ar@{-}[dll]\ar@{.}[rr]\ar@{.}[ll] &&
\ar@{-}[dllll]
&& %
&&&& &&&& 
\\
&&&&&& &&&&&& 
&& %
\ar@{.}[rrrrrr] &&&&&& \ar@{}[d]|{\vdots} &&&& \ar@{.}[llll] 
&& %
&&&& &&&& 
\\
&&&&&& &&&&&& 
&& %
&&&&&& &&&& 
&& %
&&&& &&&& 
\\
&&&&&& &&&&&& 
&& %
&&&&&& &&&& 
&& %
&&&& &&&& 
\\
\save[].[rrrrrrrrrrrrddd]!C="g1" *+<16pt>[F-,]\frm{}\restore 
\ar@{}[d]|{\displaystyle\cdots}
& \ar@{-}[dr] & \ar@{}[d]|(0.3){\alpha} & \ar@{-}[dl] &
& \ar@{-}[dr] & \ar@{}[d]|(0.3){\beta} & \ar@{-}[dl] &
& \ar@{-}[dr] & \ar@{}[d]|(0.3){\gamma} & \ar@{-}[dl] &
\ar@{}[d]|{\displaystyle\cdots}
&& %
&&&&&& &&&& 
&& %
\save[].[rrrrrrrrddd]!C="g3" *+<16pt>[F-,]\frm{}\restore 
\ar@{}[d]|{\displaystyle\cdots} &
\ar@{-}[drrr] &&& \ar@{}[d]|<{\sh(\alpha,\beta,\gamma)} &&& \ar@{-}[dlll] &
\ar@{}[d]|{\displaystyle\cdots}
&& %
&&&& &&&& 
\\
\ar@{-}[drrrrrr] 
&& \bullet\ar@{-}[drrrr]\ar@{.}[rr]\ar@{.}[ll] &&
&& \bullet\ar@{-}[d]\ar@{.}[rr]\ar@{.}[ll] &&
&& \bullet\ar@{-}[dllll]\ar@{.}[rr]\ar@{.}[ll] &&
\ar@{-}[dllllll]
&& %
&&&&&& &&&& 
&& %
\ar@{-}[drrrr] &&&& \bullet\ar@{-}[d]\ar@{.}[rrrr]\ar@{.}[llll] &&&& \ar@{-}[dllll] 
\\
\ar@{.}[rrrrrr] &&&&&& \ar@{}[d]|{\vdots} &&&&&& \ar@{.}[llllll] 
&& %
&&&&&& &&&& 
&& %
\ar@{.}[rrrr] &&&& \ar@{}[d]|{\vdots} &&&& \ar@{.}[llll] 
\\
&&&&&& &&&&&& 
&& %
&&&&&& &&&& 
&& %
&&&& &&&& 
\\
&&&&&& &&&&&& 
&& %
&&&&&& &&&& 
&& %
&&&& &&&& 
\\
&&&&&& &&&&&& 
&& %
\save[].[rrrrrrrrrrddd]!C="g22" *+<16pt>[F-,]\frm{}\restore 
\ar@{}[d]|{\displaystyle\cdots}
& \ar@{-}[dr] & \ar@{}[d]|(0.3){\alpha} & \ar@{-}[dl] &
& \ar@{-}[drr] && \ar@{}[d]|<{\sh(\beta,\gamma)} && \ar@{-}[dll] &
\ar@{}[d]|{\displaystyle\cdots}
&& %
&&&& &&&& 
\\
&&&&&& &&&&&& 
&& %
\ar@{-}[drrrr] 
&& \bullet\ar@{-}[drr]\ar@{.}[rr]\ar@{.}[ll] &&
&&& \bullet\ar@{-}[dlll]\ar@{.}[rrr]\ar@{.}[lll] &&&
\ar@{-}[dllllll]
&& %
&&&& &&&& 
\\
&&&&&& &&&&&& 
&& %
\ar@{.}[rrrr] &&&& \ar@{}[d]|{\vdots} &&&&&& \ar@{.}[llllll] 
&& %
&&&& &&&& 
\\
&&&&&& &&&&&& 
&& %
&&&& &&&&&& 
&& %
&&&& &&&& 
\ar "g1"!CR+<16pt,0pt>;"g3"!CL-<16pt,0pt>
\ar@{.>} "g1"!UC+<16pt,16pt>;"g21"!CL-<16pt,16pt>
\ar@{.>} "g1"!DC+<16pt,-16pt>;"g22"!CL-<16pt,-16pt>
\ar@{.>} "g21"!CR+<16pt,-16pt>;"g3"!UC+<-16pt,16pt>
\ar@{.>} "g22"!CR+<16pt,16pt>;"g3"!DC+<-16pt,-16pt>
}
\end{equation*}
\caption{}\label{Fig:ConsecutiveVertexMerging}\end{figure}

\begin{figure}[t]
\begin{equation*}
\xymatrix@W=0mm@H=0mm@R=4.5mm@C=3mm@M=0mm{
&& &&&& &&&& 
&& 
\save[].[rrrrrrrrrrdddddd]!C="g21" *+<16pt>[F-,]\frm{}\restore && 
\ar@{}[d]|{\cdots}
& \ar@{-}[drrr] &&& \ar@{}[d]|(0.3){\sh(\gamma,\delta)} &&& \ar@{-}[dlll] &
\ar@{}[d]|{\cdots}
&& 
&&&& &&&& 
\\
&& &&&& &&&& 
&& 
&& 
\ar@{-}[drrrr]
&&&& \bullet\ar@{-}[d]\ar@{.}[rrrr]\ar@{.}[llll] &&&&
\ar@{-}[dllll]
&& 
&&&& &&&& 
\\
&& &&&& &&&& 
&& 
&& \ar@{.}[rrrr] &&&& \ar@{}[d]|(0.3){\vdots} &&&& \ar@{.}[llll] 
&& 
&&&& &&&& 
\\
&& &&&& &&&& 
&& 
\ar@{}[d]|{\cdots} 
& \ar@{-}[dr] & \ar@{}[d]|(0.3){\alpha} & \ar@{-}[dl] &
& \ar@{-}[dr] & \ar@{}[d]|(0.3){\beta} & \ar@{-}[dl] &
\ar@{}[d]|{\cdots}
&&
&& 
&&&& &&&& 
\\
&& &&&& &&&& 
&& 
\ar@{-}[drrrr] 
&& \bullet\ar@{-}[drr]\ar@{.}[rr]\ar@{.}[ll] &&
&& \bullet\ar@{-}[dll]\ar@{.}[rr]\ar@{.}[ll] &&
\ar@{-}[dllll]
&&
&& 
&&&& &&&& 
\\
&& &&&& &&&& 
&& 
\ar@{.}[rrrr] &&&& \ar@{}[d]|(0.3){\vdots} &&&& \ar@{.}[llll] 
&&
&& 
&&&& &&&& 
\\
&& &&&& &&&& 
&& 
&& &&&& &&&& 
&& 
&&&& &&&& 
\\
&& &&&& &&&& 
&& 
&& &&&& &&&& 
&& 
&&&& &&&& 
\\
\save[].[rrrrrrrrrrdddddd]!C="g1" *+<16pt>[F-,]\frm{}\restore && 
\ar@{}[d]|{\cdots}
& \ar@{-}[dr] & \ar@{}[d]|(0.3){\gamma} & \ar@{-}[dl] &
& \ar@{-}[dr] & \ar@{}[d]|(0.3){\delta} & \ar@{-}[dl] &
\ar@{}[d]|{\cdots}
&& 
&& &&&& &&&& 
&& 
\save[].[rrrrrrrrdddddd]!C="g3" *+<16pt>[F-,]\frm{}\restore 
\ar@{}[d]|{\cdots}
& \ar@{-}[drrr] &&& \ar@{}[d]|(0.2){\sh(\gamma,\delta)} &&& \ar@{-}[dlll] &
\ar@{}[d]|{\cdots}
\\
&& 
\ar@{-}[drrrr]
&& \bullet\ar@{-}[drr]\ar@{.}[rr]\ar@{.}[ll] &&
&& \bullet\ar@{-}[dll]\ar@{.}[rr]\ar@{.}[ll] &&
\ar@{-}[dllll]
&& 
&& &&&& &&&& 
&& 
\ar@{-}[drrrr] 
&&&& \bullet\ar@{-}[d]\ar@{.}[rrrr]\ar@{.}[llll] &&&&
\ar@{-}[dllll]
\\
&& \ar@{.}[rrrr] &&&& \ar@{}[d]|(0.3){\vdots} &&&& \ar@{.}[llll] 
&& 
&& &&&& &&&& 
&& 
\ar@{.}[rrrr] &&&& \ar@{}[d]|(0.3){\vdots} &&&& \ar@{.}[llll] 
\\
\ar@{}[d]|{\cdots} 
& \ar@{-}[dr] & \ar@{}[d]|(0.3){\alpha} & \ar@{-}[dl] &
& \ar@{-}[dr] & \ar@{}[d]|(0.3){\beta} & \ar@{-}[dl] &
\ar@{}[d]|{\cdots}
&&
&& 
&& &&&& &&&& 
&& 
\ar@{}[d]|{\cdots} 
& \ar@{-}[drrr] &&& \ar@{}[d]|(0.2){\sh(\alpha,\beta)} &&& \ar@{-}[dlll] &
\ar@{}[d]|{\cdots}
\\
\ar@{-}[drrrr] 
&& \bullet\ar@{-}[drr]\ar@{.}[rr]\ar@{.}[ll] &&
&& \bullet\ar@{-}[dll]\ar@{.}[rr]\ar@{.}[ll] &&
\ar@{-}[dllll]
&&
&& 
&& &&&& &&&& 
&& 
\ar@{-}[drrrr] 
&&&& \bullet\ar@{-}[d]\ar@{.}[rrrr]\ar@{.}[llll] &&&&
\ar@{-}[dllll]
\\
\ar@{.}[rrrr] &&&& \ar@{}[d]|(0.3){\vdots} &&&& \ar@{.}[llll] 
&&
&& 
&& &&&& &&&& 
&& 
\ar@{.}[rrrr] &&&& \ar@{}[d]|(0.3){\vdots} &&&& \ar@{.}[llll] 
\\
&& &&&& &&&& 
&& 
&& &&&& &&&& 
&& 
&&&& &&&& 
\\
&& &&&& &&&& 
&& 
&& &&&& &&&& 
&& 
&&&& &&&& 
\\
&& &&&& &&&& 
&& 
&
\save[].[rrrrrrrrdddddd]!C="g22" *+<16pt>[F-,]\frm{}\restore 
\ar@{}[d]|{\cdots}
& \ar@{-}[dr] & \ar@{}[d]|(0.3){\gamma} & \ar@{-}[dl] &
& \ar@{-}[dr] & \ar@{}[d]|(0.3){\delta} & \ar@{-}[dl] &
\ar@{}[d]|{\cdots}
&
&& 
&&&& &&&& 
\\
&& &&&& &&&& 
&& 
&
\ar@{-}[drrrr] 
&& \bullet\ar@{-}[drr]\ar@{.}[rr]\ar@{.}[ll] &&
&& \bullet\ar@{-}[dll]\ar@{.}[rr]\ar@{.}[ll] &&
\ar@{-}[dllll]
&
&& 
&&&& &&&& 
\\
&& &&&& &&&& 
&& 
&
\ar@{.}[rrrr] &&&& \ar@{}[d]|(0.3){\vdots} &&&& \ar@{.}[llll] 
&
&& 
&&&& &&&& 
\\
&& &&&& &&&& 
&& 
&
\ar@{}[d]|{\cdots} 
& \ar@{-}[drrr] &&& \ar@{}[d]|(0.2){\sh(\alpha,\beta)} &&& \ar@{-}[dlll] &
\ar@{}[d]|{\cdots}
&
&& 
&&&& &&&& 
\\
&& &&&& &&&& 
&& 
&
\ar@{-}[drrrr] 
&&&& \bullet\ar@{-}[d]\ar@{.}[rrrr]\ar@{.}[llll] &&&&
\ar@{-}[dllll]
& 
&&&& &&&& 
\\
&& &&&& &&&& 
&& 
&
\ar@{.}[rrrr] &&&& \ar@{}[d]|(0.3){\vdots} &&&& \ar@{.}[llll] 
&
&& 
&&&& &&&& 
\\
&& &&&& &&&& 
&& 
&
&&&& &&&& 
&
&& 
&&&& &&&& 
\ar "g1"!CR+<16pt,0pt>;"g3"!CL-<16pt,0pt>
\ar@{.>} "g1"!UC+<16pt,16pt>;"g21"!CL-<16pt,16pt>
\ar@{.>} "g1"!DC+<16pt,-16pt>;"g22"!CL-<16pt,-16pt>
\ar@{.>} "g21"!CR+<16pt,-16pt>;"g3"!UC+<-16pt,16pt>
\ar@{.>} "g22"!CR+<16pt,16pt>;"g3"!DC+<-16pt,-16pt>
}
\end{equation*}
\caption{}\label{Fig:DisjointVertexMerging}\end{figure}

\begin{figure}[t]
\begin{equation*}
\xymatrix@W=0mm@H=0mm@R=4.5mm@C=3mm@M=0mm{
&&&&&&& &&&&&&& 
\save[].[rrrrrrrrrrrrdddd]!C="g21" *+<16pt>[F-,]\frm{}\restore 
\ar@{}[drr]|(0.7){\sh(\alpha',\beta')}
&&
& \ar@{-}[dr] & \ar@{}[d]|(0.3){\gamma} & \ar@{-}[dl] &
& \ar@{-}[dr] & \ar@{}[d]|(0.3){\delta} & \ar@{-}[dl] &
&&
\ar@{}[dll]|(0.7){\sh(\alpha'',\beta'')}
&&&&& &&&&& 
\\
&&&&&&& &&&&&&& 
\ar@{-}[drrrrrr] &&& \ar@{-}[drrr] 
& \bullet\ar@{-}[drr]\ar@{.}[rr]\ar@{.}[l] &&
&& \bullet\ar@{-}[dll]\ar@{.}[r]\ar@{.}[ll] &
\ar@{-}[dlll] &&& \ar@{-}[dllllll]
&&&&& &&&&& 
\\
&&&&&&& &&&&&&& 
&&&&&& \bullet\ar@{-}[d]\ar@{.}[rrrrrr]\ar@{.}[llllll] &&&&&& 
&&&&& &&&&& 
\\
&&&&&&& &&&&&&& 
&&&&&& \ar@{}[d]|{\vdots}\ar@{.}[rrrrrr]\ar@{.}[llllll] &&&&&& 
&&&&& &&&&& 
\\
&&&&&&& &&&&&&& 
&&&&&& &&&&&& 
&&&&& &&&&& 
\\
&&&&&&& &&&&&&& 
&&&&&& &&&&&& 
&&&&& &&&&& 
\\
\save[].[rrrrrrrrrrrrrrdddd]!C="g1" *+<16pt>[F-,]\frm{}\restore 
&
& \ar@{-}[dr] & \ar@{}[d]|(0.3){\gamma} & \ar@{-}[dl] &
&&&&
& \ar@{-}[dr] & \ar@{}[d]|(0.3){\delta} & \ar@{-}[dl] &
&
&&&&&& &&&&&& 
\save[].[rrrrrrrrrrdddd]!C="g3" *+<16pt>[F-,]\frm{}\restore 
\ar@{}[drr]|(0.7){\sh(\alpha',\beta')}
&&
& \ar@{-}[drr] && \ar@{}[d]|(0.1){\sh(\gamma,\delta)} && \ar@{-}[dll] &
&&
\ar@{}[dll]|(0.7){\sh(\alpha'',\beta'')}
\\
\ar@{-}[drrr] & \ar@{}[drr]|(0.05){\alpha'} & \ar@{-}[dr] 
& \bullet\ar@{-}[d]\ar@{.}[r]\ar@{.}[l] &
\ar@{-}[dl] & \ar@{}[dll]|(0.05){\alpha''} & \ar@{-}[dlll]
&&
\ar@{-}[drrr] & \ar@{}[drr]|(0.05){\beta'} & \ar@{-}[dr]
& \bullet\ar@{-}[d]\ar@{.}[r]\ar@{.}[l] &
\ar@{-}[dl] & \ar@{}[dll]|(0.05){\beta''} & \ar@{-}[dlll]
&&&&&& &&&&&& 
\ar@{-}[drrrrr] &&& \ar@{-}[drr] 
&& \bullet\ar@{-}[d]\ar@{.}[rr]\ar@{.}[ll] &&
\ar@{-}[dll] &&& \ar@{-}[dlllll]
\\
& 
&& \bullet\ar@{-}[drrrr]\ar@{.}[rrrr]\ar@{.}[lll] &&
&&&&
&& \bullet\ar@{-}[dllll]\ar@{.}[rrr]\ar@{.}[llll] &&
&
&&&&&& &&&&&& 
&&&&& \bullet\ar@{-}[d]\ar@{.}[rrrrr]\ar@{.}[lllll] &&&&& 
\\
&&&&&&& \ar@{}[d]|{\vdots}\ar@{.}[rrrrrrr]\ar@{.}[lllllll] &&&&&&& 
&&&&&& &&&&&& 
&&&&& \ar@{}[d]|{\vdots}\ar@{.}[rrrrr]\ar@{.}[lllll] &&&&& 
\\
&&&&&&& &&&&&&& 
&&&&&& &&&&&& 
&&&&& &&&&& 
\\
&&&&&&& &&&&&&& 
&&&&&& &&&&&& 
&&&&& &&&&& 
\\
&&&&&&& &&&&&&& 
\save[].[rrrrrrrrrrrrdddd]!C="g22" *+<16pt>[F-,]\frm{}\restore 
\ar@{}[drr]|(0.7){\sh(\alpha',\beta')}
&&
& \ar@{-}[dr] & \ar@{}[d]|(0.3){\delta} & \ar@{-}[dl] &
& \ar@{-}[dr] & \ar@{}[d]|(0.3){\gamma} & \ar@{-}[dl] &
&&
\ar@{}[dll]|(0.7){\sh(\alpha'',\beta'')}
&&&&& &&&&& 
\\
&&&&&&& &&&&&&& 
\ar@{-}[drrrrrr] &&& \ar@{-}[drrr] 
& \bullet\ar@{-}[drr]\ar@{.}[rr]\ar@{.}[l] &&
&& \bullet\ar@{-}[dll]\ar@{.}[r]\ar@{.}[ll] &
\ar@{-}[dlll] &&& \ar@{-}[dllllll]
&&&&& &&&&& 
\\
&&&&&&& &&&&&&& 
&&&&&& \bullet\ar@{-}[d]\ar@{.}[rrrrrr]\ar@{.}[llllll] &&&&&& 
&&&&& &&&&& 
\\
&&&&&&& &&&&&&& 
&&&&&& \ar@{}[d]|{\vdots}\ar@{.}[rrrrrr]\ar@{.}[llllll] &&&&&& 
&&&&& &&&&& 
\\
&&&&&&& &&&&&&& 
&&&&&& &&&&&& 
&&&&& &&&&& 
\ar "g1"!CR+<16pt,0pt>;"g3"!CL-<16pt,0pt>
\ar@{.>} "g1"!UC+<16pt,16pt>;"g21"!CL-<16pt,16pt>
\ar@{.>} "g1"!DC+<16pt,-16pt>;"g22"!CL-<16pt,-16pt>
\ar@{.>} "g21"!CR+<16pt,-16pt>;"g3"!UC+<-16pt,16pt>
\ar@{.>} "g22"!CR+<16pt,16pt>;"g3"!DC+<-16pt,-16pt>
}
\end{equation*}
\caption{}\label{Fig:OrderedVertexMerging}\end{figure}

On suppose d'abord que $u$ fusionne trois sommets cons\'ecutifs $a$, $b = a+1$, $c = a+2$
sur une m\^eme fibre $\tau_k^{-1}(x)$.
La Figure~\ref{Fig:ConsecutiveVertexMerging}
donne la repr\'esentation sch\'ematique de ce morphisme
et ses deux d\'ecompositions.
L'expression $\sh(\alpha,\beta,\gamma)$ repr\'esente une permutation de battage des sous-arbres $(\alpha,\beta,\gamma)$.
L'existence des deux d\'ecompositions r\'esulte de l'existence d'uniques permutations de battages \`a deux composantes
telles qu'on a la relation
\begin{equation*}
\sh(\alpha\cup\beta,\gamma)\cdot\sh(\alpha,\beta)
= \sh(\alpha,\beta,\gamma)
= \sh(\alpha,\beta\cup\gamma)\cdot\sh(\beta,\gamma)
\end{equation*}
dans le groupe des permutations de l'alphabet $\alpha\cup\beta\cup\gamma$.

On suppose maintenant que $u$ fusionne un couple de sommets cons\'ecutifs $a$, $b = a+1$,
sur une m\^eme fibre $\tau_k^{-1}(x)$,
et un couple disjoint de sommets cons\'ecutifs $c$, $d = c+1$,
sur une fibre $\tau_l^{-1}(y)$ avec \'eventuellement $x\not=y$ et $k\not=l$.
La repr\'esentation sch\'ematique de ce morphisme
et de ses d\'ecompositions est donn\'ee Figure~\ref{Fig:DisjointVertexMerging}.

Un dernier cas correspond \`a la fusion d'un couple de sommets cons\'ecutifs $a$, $b = a+1$,
sur une m\^eme fibre $\tau_k^{-1}(x)$,
et d'un couple de sommets $c$ et $d$ tels que $c\in\tau_k^{-1}(a)$ et $d\in\tau_k^{-1}(b)$.
La repr\'esentation sch\'ematique de ce morphisme
et de ses d\'ecompositions est donn\'ee Figure~\ref{Fig:OrderedVertexMerging}.

On se convainc ais\'ement qu'il n'y a pas d'autres possibilit\'es
que ces trois configurations.

On v\'erifie ensuite, par simple inspection des r\`egles de permutation du~\S\ref{KoszulConstruction:Signs},
que les signes associ\'es aux deux d\'ecompositions $u = v_1 w_1 = v_2 w_2$
d'un morphisme de degr\'e $2$ sont oppos\'es dans chaque cas
et ceci termine la d\'emonstration de la proposition.
\end{proof}

Notre objectif principal consiste maintenant \`a montrer que ce morphisme $\iota: K(\Omega_n^{epi})\rightarrow B(\Omega_n^{epi})$
d\'efinit une \'equivalence faible.
On introduit une version \`a coefficients des constructions $K(\Omega_n^{epi})$ et $B(\Omega_n^{epi})$
afin de d\'emontrer ce r\'esultat de fa\c con indirecte,
par des arguments de comparaison de complexes.

\subsection{Complexes \`a coefficients}\label{CoefficientConstructions}
Les versions \`a coefficients de $K(\Omega_n^{epi})$ et $B(\Omega_n^{epi})$
que l'on consid\`ere sont d\'efinies sur les cat\'egories de $\Omega_n^{epi}$-diagramme
covariants et contravariants.
Pour le moment,
on ne consid\`ere que des $\Omega_n^{epi}$-diagrammes en $\kk$-modules, pour lesquels les d\'efinitions
les plus classiques de l'alg\`ebre homologique s'appliquent~:
rappelons simplement que la cat\'egorie des $\Omega_n^{epi}$-diagrammes covariants en $\kk$-modules
(et la cat\'egorie des $\Omega_n^{epi}$-diagrammes contravariants de m\^eme)
est ab\'elienne
et poss\`ede un ensemble de g\'en\'erateurs projectifs
d\'efinis par les diagrammes de Yoneda $\Omega_n^{epi}(\tauobj,-)$.
On \'etendra nos constructions au cadre diff\'erentiel gradu\'e dans la section suivante.

\subsubsection{Constructions \`a coefficients}\label{CoefficientConstructions:BasicDefinition}
On se donne donc un $\Omega_n^{epi}$-diagramme contravariant $S$
et un $\Omega_n^{epi}$-diagramme covariant $T$
\`a valeurs dans les $\kk$-modules.
Le complexe bar \`a coefficients $B(S,\Omega_n^{epi},T)$
est d\'efini par le $\kk$-module
\begin{equation*}
B(S,\Omega_n^{epi},T)_d
= \bigoplus_{(u_1,\dots,u_d)}
S(\tauobj_0)\otimes\kk\{\tauobj_0\xleftarrow{u_1}\cdots\xleftarrow{u_d}\tauobj_d\}\otimes S(\tauobj_d)
\end{equation*}
en degr\'e $d$
avec la diff\'erentielle donn\'ee par la formule
\begin{multline*}
\partial(x\otimes\{\tauobj_0\xleftarrow{u_1}\cdots\xleftarrow{u_d}\tauobj_d\}\otimes y)
= u_1^*(x)\otimes\{\tauobj_1\xleftarrow{u_2}\cdots\xleftarrow{u_d}\tauobj_d\}\otimes y\\
+ \sum_{i=1}^{d-1} (-1)^i x\otimes\{\tauobj_0\xleftarrow{u_1}\cdots
\xleftarrow{u_i u_{i+1}}\cdots
\xleftarrow{u_d}\tauobj_d\}\otimes y\\
+ (-1)^d x\otimes\{\tauobj_0\xleftarrow{u_1}\cdots\xleftarrow{u_{d-1}}\tauobj_{d-1}\}\otimes u_d{}_*(y).
\end{multline*}
Le complexe de Koszul \`a coefficients $K(S,\Omega_n^{epi},T)$
est d\'efini par les $\kk$-modules
\begin{equation*}
K(S,\Omega_n^{epi},T) = \bigoplus_{\substack{ u: \tauobj\rightarrow\sigmaobj\\
(\sigmaobj,\tauobj)\in\Ob\Omega_n^{epi}\times\Ob\Omega_n^{epi}}}
S(\sigmaobj)\otimes\{u\}\otimes T(\tauobj)
\end{equation*}
munis de la graduation telle que $\deg(x\otimes\{u\}\otimes y) = \deg(u) = \deg(\tauobj)-\deg(\sigmaobj)$,
avec la diff\'erentielle donn\'ee par la formule
\begin{multline*}
\partial(x\otimes\{u\}\otimes y) = \sum_{\substack{u = v w\\
\deg(v) = 1}}
\sgn(v)\cdot v^*(x)\otimes\{w\}\otimes y\\ +
\sum_{\substack{u = v w\\
\deg(w) = 1}}
(-1)^{\deg(v)} \sgn(w)\cdot x\otimes\{v\}\otimes w_*(y)),
\end{multline*}
pour tout $x\in S(\sigmaobj)$, $y\in T(\tauobj)$, et tout morphisme $u: \tauobj\rightarrow\sigmaobj$.

L'application $\iota: K(\Omega_n^{epi})\rightarrow B(\Omega_n^{epi})$
du~\S\ref{KoszulConstruction:KoszulCategory}
poss\`ede une extension naturelle aux constructions \`a coefficients $\kappa: K(S,\Omega_n^{epi},T)\rightarrow B(S,\Omega_n^{epi},T)$
qui est d\'efinie en posant
\begin{equation*}
\kappa(x\otimes\{u\}\otimes y) = x\otimes\iota\{u\}\otimes y,
\end{equation*}
pour tout $x\otimes\{u\}\otimes y\in K(S,\Omega_n^{epi},T)$.
On constate que~:

\begin{prop}\label{CoefficientConstructions:KoszulEmbedding}
Notre application commute aux diff\'erentielles
et d\'efinit un morphisme naturel de dg-modules
\begin{equation*}
K(S,\Omega_n^{epi},T)\xrightarrow{\kappa} B(S,\Omega_n^{epi},T),
\end{equation*}
pour tout couple de diagrammes $(S,T)$.
\end{prop}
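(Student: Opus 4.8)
The plan is to reduce the claim to Proposition~\ref{KoszulConstruction:KoszulEmbedding}, which we already have, by checking that $\kappa$ intertwines the two differentials on the nose. Since $\kappa$ is defined componentwise by $\kappa(x\otimes\{u\}\otimes y) = x\otimes\iota\{u\}\otimes y$, and the middle factor $\iota\{u\}$ lives in $B(\Omega_n^{epi})$ where we know $\partial\iota\{u\} = 0$, the only genuinely new phenomenon is the interaction of $\kappa$ with the \emph{outer} faces of the bar differential -- the terms $u_1^*(x)$ and $u_d{}_*(y)$ that act on the coefficients $S$ and $T$. The internal faces will be handled by the already-established identity $\partial\iota\{u\}=0$.

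Concretely, I would first expand $\partial^B\kappa(x\otimes\{u\}\otimes y)$ by applying the three families of terms in the bar differential to $x\otimes\iota\{u\}\otimes y$. Writing $\iota\{u\} = \sum \pm\,\{\tauobj_0\xleftarrow{u_1}\cdots\xleftarrow{u_d}\tauobj_d\}$ as a sum over degree-$1$ decompositions $u = u_1\cdots u_d$, the inner faces $(i=1,\dots,d-1)$ act exactly as in the coefficient-free setting, so by Proposition~\ref{KoszulConstruction:KoszulEmbedding} their total contribution vanishes. What survives are precisely the extremal faces: the $i=0$ term contributes $\sum \pm\,u_1^*(x)\otimes\{\tauobj_1\xleftarrow{u_2}\cdots\}\otimes y$, and the $i=d$ term contributes $\sum \pm\,x\otimes\{\cdots\xleftarrow{u_{d-1}}\}\otimes u_d{}_*(y)$. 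The task is then to recognize each of these two surviving sums as the image under $\kappa$ of the two terms in the Koszul differential.

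The key bookkeeping step is the following matching. In the first surviving sum, fix the leftmost degree-$1$ factor $v = u_1$ and collect all decompositions of $u$ beginning with it; since $u = v\,w$ with $\deg v = 1$, and $w$ ranges over all degree-$1$ decompositions of the remaining morphism, the inner sum is exactly $\sgn(v)\cdot v^*(x)\otimes\iota\{w\}\otimes y$, which is $\sgn(v)\cdot\kappa(v^*(x)\otimes\{w\}\otimes y)$. Summing over all degree-$1$ left-factors $v$ of $u$ reproduces precisely the first term of the Koszul differential $\partial^K$. Symmetrically, grouping the second surviving sum by its rightmost factor $w = u_d$ recovers the second term, where the sign $(-1)^{\deg v}\sgn(w)$ comes from the $(-1)^d$ in front of the outer right face together with the accumulated permutation signs $\sgn(u_1)\cdots\sgn(u_{d-1})$ repackaging into $\sgn(v)$ via $\iota\{v\}$.

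The main obstacle I anticipate is the sign comparison, not the combinatorics of the decompositions. One must verify that the sign $(-1)^d$ attached to the outer right face of the bar complex, combined with the product of degree-$1$ signs $\sgn(u_i)$ appearing in $\iota$, collapses correctly to the factor $(-1)^{\deg v}\sgn(w)$ prescribed by $\partial^K$; here the identity $\deg(v) = d-1$ for a degree-$(d-1)$ morphism written as a product of $d-1$ degree-$1$ morphisms is what converts $(-1)^d$ into the needed $(-1)^{\deg v}$ after absorbing one factor into the reindexing. This is a routine but sign-delicate check following the conventions of~\S\ref{KoszulConstruction:Signs}, and once it is confirmed in both the $S$- and $T$-directions, naturality in $(S,T)$ is immediate since $\kappa$, $v^*$, and $w_*$ are all natural in the respective diagram arguments.
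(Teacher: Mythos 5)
Your proposal is correct and takes essentially the same route as the paper: the paper's proof likewise invokes $\partial\iota\{u\}=0$ from Proposition~\ref{KoszulConstruction:KoszulEmbedding} to dispose of the inner faces of the bar differential, and then observes that commutation with the terms acting on the coefficients follows by inspection of the definition of $\iota\{u\}$. Your grouping of the two extremal faces by the leftmost factor $v=u_1$ (resp.\ the rightmost factor $w=u_d$) is exactly the ``inspection'' the paper leaves implicit, including the sign bookkeeping that the paper dismisses as immediate.
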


\begin{proof}
Comme on sait que $\partial\iota\{u\} = 0$,
il suffit de v\'erifier que $\kappa$ commute aux termes de la diff\'erentielle faisant intervenir l'action sur les coefficients,
et une inspection de la d\'efinition de $\iota\{u\}$
permet d'\'etablir cette propri\'et\'e de fa\c con imm\'ediate.
\end{proof}

On forme les complexes $\Gamma(S,\Omega_n^{epi},\Omega_n^{epi}(\phiobj,-))$, $\Gamma = B,K$,
associ\'es \`a un diagramme de Yoneda $\Omega_n^{epi}(\phiobj,-)$,
pour un diagramme contravariant quelconque $S$.
On observe ais\'ement que les produits de composition
\begin{equation*}
\Omega_n^{epi}(\underline{\psi},\phiobj)\otimes\Omega_n^{epi}(\phiobj,-)\rightarrow\Omega_n^{epi}(\underline{\psi},-)
\end{equation*}
induisent des morphismes
\begin{equation*}
\Omega_n^{epi}(\underline{\psi},\phiobj)\otimes\Gamma(S,\Omega_n^{epi},\Omega_n^{epi}(\phiobj,-))
\rightarrow\Gamma(S,\Omega_n^{epi},\Omega_n^{epi}(\underline{\psi},-))
\end{equation*}
qui font de la collection $\Gamma(S,\Omega_n^{epi},\Omega_n^{epi}(\phiobj,-))$, $\phiobj\in\Ob\Omega_n^{epi}$,
un $\Omega_n^{epi}$-diagramme contravariant $\Gamma(S,\Omega_n^{epi},\Omega_n^{epi})$
naturellement associ\'e \`a $S$.

On a aussi un morphisme d'augmentation
\begin{equation*}
\epsilon: \Gamma(S,\Omega_n^{epi},\Omega_n^{epi})\rightarrow S,
\end{equation*}
donn\'e en degr\'e $0$ par les morphismes
\begin{equation*}
S(\tauobj)\otimes\Omega_n^{epi}(\phiobj,\tauobj)\rightarrow S(\phiobj)
\end{equation*}
induits par l'action de $\Omega_n^{epi}$ sur $S$.
On identifie $S$ \`a un complexe concentr\'e en degr\'e $0$
pour faire de l'augmentation
un morphisme de complexes de $\Omega_n^{epi}$-diagrammes.

On sait que la construction bar v\'erifie la propri\'et\'e suivante~:

\begin{lemm}\label{CoefficientConstructions:CoefficientBarAcyclicity}
Le morphisme d'augmentation
\begin{equation*}
\epsilon: B(S,\Omega_n^{epi},\Omega_n^{epi})\rightarrow S
\end{equation*}
d\'efinit une \'equivalence faible de $\Omega_n^{epi}$-diagrammes contravariants
pour tout~$S$.
\end{lemm}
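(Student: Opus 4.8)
Le plan est d'exploiter la dégénérescence supplémentaire (\emph{extra degeneracy}) de la construction bar à deux côtés, qui fournit une homotopie contractante explicite du complexe augmenté. Comme les équivalences faibles de $\Omega_n^{epi}$-diagrammes se détectent objet par objet, il suffit de fixer un objet $\phiobj\in\Ob\Omega_n^{epi}$ et de montrer que le complexe $B(S,\Omega_n^{epi},\Omega_n^{epi}(\phiobj,-))$, augmenté vers $S(\phiobj)$ par $\epsilon$, est acyclique.

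D'abord je formerais l'homotopie $h$, élevant le degré de $1$, donnée sur un générateur $x\otimes\{\tauobj_0\xleftarrow{u_1}\cdots\xleftarrow{u_d}\tauobj_d\}\otimes w$, avec $w: \phiobj\rightarrow\tauobj_d$, par
\begin{equation*}
h(x\otimes\{\tauobj_0\xleftarrow{u_1}\cdots\xleftarrow{u_d}\tauobj_d\}\otimes w)
= x\otimes\{\tauobj_0\xleftarrow{u_1}\cdots\xleftarrow{u_d}\tauobj_d\xleftarrow{w}\phiobj\}\otimes\id_{\phiobj},
\end{equation*}
autrement dit je transférerais le facteur $w$ du coefficient covariant vers le complexe bar, en le remplaçant par $\id_{\phiobj}$. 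Lorsque $w=\id_{\phiobj}$ le terme obtenu est dégénéré, donc nul dans la construction bar normalisée réduite~: l'homotopie $h$ est ainsi bien définie et compatible avec la condition de non-dégénérescence $u_i\not=\id$. Je poserais de même $h(x) = x\otimes\id_{\phiobj}$, pour $x\in S(\phiobj)$, afin de relever l'augmentation en degré~$-1$.

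Le cœur du calcul consiste à vérifier l'identité $\partial h + h\partial = \id$ (à un signe dépendant du degré près, que l'on absorbe en normalisant $h$). En développant $\partial h$ par la formule de la différentielle du~\S\ref{CoefficientConstructions:BasicDefinition}, les termes issus des $d$ premières faces reproduisent exactement $h\partial$, tandis que le dernier terme -- correspondant à l'action $w_*(\id_{\phiobj}) = w$ du coefficient appendu -- restitue le générateur de départ~; en degré~$0$, le terme résiduel s'identifie à $h\epsilon$ grâce à la formule $\epsilon(x\otimes w) = w^*(x)$. On en déduirait que le complexe augmenté est contractile, donc acyclique, et donc que $\epsilon$ induit un isomorphisme en homologie en chaque $\phiobj$, ce qui donne le résultat voulu pour les diagrammes.

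Je ne prévois pas d'obstacle conceptuel~: c'est l'argument classique de dégénérescence supplémentaire pour la construction bar à deux côtés, dont l'acyclicité ne dépend d'aucune propriété particulière de $\Omega_n^{epi}$. La seule difficulté est purement calculatoire -- suivre les signes engendrés par la différentielle alternée -- et il faut veiller à la compatibilité de $h$ avec la normalisation, ce qui impose précisément d'annuler $h$ sur les termes où $w=\id_{\phiobj}$.
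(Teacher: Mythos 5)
Votre preuve est correcte et reprend essentiellement le m\^eme argument que l'article~: votre homotopie $h$ est exactement l'homotopie contractante $\nu$ du texte (transfert du morphisme coefficient dans la partie bar, le coefficient \'etant remplac\'e par l'identit\'e), et votre rel\`evement de l'augmentation correspond \`a la section $\eta: x\mapsto x\otimes\{1_{\tauobj}\}$. La relation $\partial h+h\partial=\id$ que vous v\'erifiez sur le complexe augment\'e est \'equivalente \`a l'identit\'e $\delta(\nu)=\id-\eta\epsilon$ de l'article, y compris le point de normalisation (annulation de $h$ lorsque le coefficient est l'identit\'e), qui y est implicite.
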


\begin{proof}
Si on oublie la structure de diagramme,
alors on a une section $\eta: S\rightarrow B(S,\Omega_n^{epi},\Omega_n^{epi})$
au morphisme d'augmentation $\epsilon: B(S,\Omega_n^{epi},\Omega_n^{epi})\rightarrow S$
donn\'ee par le produit tensoriel de l'identit\'e de $S(\tauobj)$
avec le morphisme identique $1_{\tauobj}\in\Omega_n^{epi}(\tauobj,\tauobj)$.
L'application
\begin{multline*}
\nu(x\otimes\{\tauobj_0\xleftarrow{u_1}\cdots\xleftarrow{u_d}\tauobj_d\}\otimes\{v\})
= x\otimes\{\tauobj_0\xleftarrow{u_1}\cdots\xleftarrow{u_d}\tauobj_d\xleftarrow{v}\tauobj\}\otimes\{1_{\tauobj}\}
\end{multline*}
d\'efinit une homotopie contractante $\nu: B(S,\Omega_n^{epi},\Omega_n^{epi})\rightarrow B(S,\Omega_n^{epi},\Omega_n^{epi})$
telle que $\delta(\nu) = \id - \eta\epsilon$,
ce qui entraine
la conclusion du lemme.
\end{proof}

On consid\`ere maintenant un diagramme ponctuel $\pt_{\sigmaobj}$
tel que~:
\begin{equation*}
\pt_{\sigmaobj}(\thetaobj) = \begin{cases} \kk, & \text{si $\thetaobj = \sigmaobj$}, \\
0, & \text{sinon}. \end{cases}
\end{equation*}
On utilisera dans la suite que $\pt_{\sigmaobj}$
poss\`ede une structure de diagramme covariant et contravariant sur~$\Omega_n^{epi}$.

On montre~:

\begin{lemm}\label{CoefficientConstructions:CoefficientKoszulAcyclicity}
Le complexe $K(\pt_{\sigmaobj},\Omega_n^{epi},\Omega_n^{epi}(\phiobj,-))$
se r\'eduit \`a sa composante de degr\'e $0$ lorsque $\phiobj=\sigmaobj$
et est acyclique lorsque $\phiobj\not=\sigmaobj$,
de sorte que le morphisme d'augmentation
\begin{equation*}
\epsilon: K(\pt_{\sigmaobj},\Omega_n^{epi},\Omega_n^{epi})\rightarrow\pt_{\sigmaobj}
\end{equation*}
d\'efinit une \'equivalence faible de $\Omega_n^{epi}$-diagrammes contravariants,
pour tout diagramme ponctuel $\pt_{\sigmaobj}$.
\end{lemm}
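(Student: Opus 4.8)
The plan is to exploit the very special shape of the differential when the contravariant coefficient is the point diagram $\pt_{\sigmaobj}$. First I would record a basis element of $K(\pt_{\sigmaobj},\Omega_n^{epi},\Omega_n^{epi}(\phiobj,-))$ as a factorisation $\phiobj\xrightarrow{\,v\,}\tauobj\xrightarrow{\,u\,}\sigmaobj$, with the convention $x\otimes\{u\}\otimes\{v\}$, graded by $\deg(u)=\deg(\tauobj)-\deg(\sigmaobj)$. The contravariant (left-hand) terms of $\partial$ then all vanish: such a term splits a degree-$1$ morphism $w\colon\thetaobj\to\sigmaobj$ off the $u$-side and applies $w^{*}$ to $x\in\pt_{\sigmaobj}(\sigmaobj)=\kk$, landing in $\pt_{\sigmaobj}(\thetaobj)$, which is zero unless $\thetaobj=\sigmaobj$; but a degree-$1$ morphism $\sigmaobj\to\sigmaobj$ would have degree $\deg(\sigmaobj)-\deg(\sigmaobj)=0$, a contradiction. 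So only the covariant terms survive, and $\partial$ merely slides an elementary step off $u$ and onto $v$. A one-line check shows that $\partial$ preserves the composite $u\,v\colon\phiobj\to\sigmaobj$, so the complex splits as $\bigoplus_{f}K_f$ indexed by the morphisms $f\colon\phiobj\to\sigmaobj$, where $K_f$ is spanned by the factorisations of $f$.

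Next I would dispose of the case $\phiobj=\sigmaobj$. Any factorisation of a self-map has $\deg(u)+\deg(v)=\deg(\phiobj)-\deg(\sigmaobj)=0$, so both $u$ and $v$ have degree $0$; a degree-$0$ morphism is an isomorphism, and the only isomorphisms of $\Omega_n^{epi}$ are the identities, whence $\tauobj=\sigmaobj$ and $u=v=\id$. The whole complex therefore reduces to the single generator $\{\id\}\otimes\{\id\}$ in degree $0$, which $\epsilon$ carries isomorphically onto $\pt_{\sigmaobj}(\sigmaobj)=\kk$; this is the first assertion.

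There remains the case $\phiobj\neq\sigmaobj$, where $\pt_{\sigmaobj}(\phiobj)=0$ and I must prove that each $K_f$ is acyclic. Here every $f$ has $\deg(f)\geq 1$ (a degree-$0$ morphism would be an isomorphism, forcing $\phiobj=\sigmaobj$), and $K_f$ lives in degrees $0,\dots,\deg(f)$. Under the dictionary of \S\ref{PrunedTrees:CommaPoset}, with $\eset=\In\phiobj$, the factorisations of $f$ are exactly the elements of the closed interval of $\eset/\Omega_n^{epi}$ lying between the $f$-labelling of $\sigmaobj$ and the identity-labelling of $\phiobj$, graded by the rank above the bottom; thus $K_f$ is the chain complex of that interval whose differential is the signed sum over the covering relations, i.e.\ the degree-$1$ morphisms of \S\ref{KoszulConstruction:IndecomposableMorphisms}, the bottom element serving as an augmentation. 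The three local configurations of degree-$2$ morphisms catalogued in the proof of Proposition~\ref{KoszulConstruction:KoszulEmbedding} -- consecutive, disjoint, and ordered vertex mergings -- are precisely the square and hexagon $2$-cells attached to this interval, which both re-establishes $\partial^{2}=0$ and identifies the interval (heuristically) with the face poset of a ball. To prove acyclicity I would build an explicit contracting homotopy $h$, raising $\deg(u)$ by one: using the canonical vertex-ordering of \S\ref{KoszulConstruction:Signs} I split off a distinguished degree-$1$ morphism $w$ with $v=w\,v''$ adjacent to the midpoint $\tauobj$ and reassign it to the $u$-side, sending $\{u\}\otimes\{v\}\mapsto\pm\{u\,w\}\otimes\{v''\}$ with the sign prescribed by the same ordering rules. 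The identity $\partial h+h\partial=\id$ would then be verified against the three configurations above, by induction on $\deg(f)$ so that the cancellations reduce to the degree-$2$ sign relation already exploited in Proposition~\ref{KoszulConstruction:KoszulEmbedding}; this is the acyclicity, extending \cite{LivernetRichter}, that \S\ref{KoszulAcyclicity} will treat in full.

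The main obstacle is this last step. Because $\eset/\Omega_n^{epi}$ is \emph{not} CL-shellable once $n$ and $\#\eset$ grow, there is no global ordering of the maximal factorisations from which a homotopy could simply be read off, so $h$ must be assembled from a purely local rule (the leftmost, outermost mergeable pair), and one must check that this rule is consistent across the square and hexagon relations and that the accumulated signs of \S\ref{KoszulConstruction:Signs} cancel exactly. I expect the sign bookkeeping, rather than the combinatorial existence of the distinguished step, to be the delicate point; framing the verification as an induction on $\deg(f)$ that peels off the distinguished step, and feeding in the degree-$2$ analysis as the base case, should keep it under control.
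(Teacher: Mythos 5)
Your preliminary reductions are correct and agree with the paper's: with $S=\pt_{\sigmaobj}$ the contravariant terms of the Koszul differential vanish for degree reasons (a degree-$1$ morphism cannot have target and source of the same degree), the complex splits as $\bigoplus_{f} K_f$ over the composites $f\in\Mor_{\Omega_n^{epi}}(\phiobj,\sigmaobj)$ --- this is exactly the splitting $L_n(\tauobj,\sigmaobj)=\bigoplus_u L_n(\tauobj,\sigmaobj)_u$ of~\S\ref{KoszulAcyclicity:Complex} --- and the case $\phiobj=\sigmaobj$ follows because degree-$0$ morphisms of $\Omega_n^{epi}$ are identities, which is the paper's «~identification imm\'ediate~».

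The genuine gap is the case $\phiobj\neq\sigmaobj$, i.e.\ the acyclicity of each $K_f$, which is not a routine verification but the key point of the entire paper, and your proposal does not prove it. The paper's actual argument (\S\ref{KoszulAcyclicity}, Lemme~\ref{KoszulAcyclicity:Result}) filters $L_n(\tauobj,\sigmaobj)_u$ by the number of vertices of the middle tree at levels $>0$, identifies the resulting $E^0$-complexes with tensor products of shuffle summands $B(A_{x_0 y_1})^{sh}$ of bar complexes of free associative algebras (Observation~\ref{KoszulAcyclicity:TensorInterpretation}), computes their homology, obtains $E^1 L_n(\tauobj,\sigmaobj)_u\simeq L_{n-1}(\tr\tauobj,\tr\sigmaobj)_{\tr u}$ when $u$ is injective fibre by fibre at level $0$ and $E^1=0$ otherwise (Lemme~\ref{KoszulAcyclicity:Homology}), and closes by induction on the number of levels $n$. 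In place of this you offer a contracting homotopy $h$ given by a local rule (``leftmost, outermost mergeable pair''), with $\partial h+h\partial=\id$ ``to be verified'': the rule is never defined precisely, the identity is never checked, and you yourself flag this verification as the unresolved delicate point. Moreover, the plan runs into the very obstruction you cite: a coherent local matching whose toggling contracts each interval is precisely what a CL-shelling (or an acyclic Morse matching read off from one) would furnish, and the remark following~\S\ref{PrunedTrees:CommaPoset} states that $\eset/\Omega_n^{epi}$ admits no CL-shelling once $n$ and $\#\eset$ grow; so the standard mechanism for proving such a rule consistent is unavailable, and nothing in your sketch replaces it. Finally, the proposed induction on $\deg(f)$ has no inductive structure to run on: factorisations of morphisms of smaller degree span \emph{other} summands $K_{f'}$, not subcomplexes of $K_f$, and the coherence required of $h$ is global --- it cannot be reduced to the degree-$2$ relations of Proposition~\ref{KoszulConstruction:KoszulEmbedding}, which only give $\partial^2=0$. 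In short, everything up to the acyclicity is fine, but the acyclicity --- the actual content of the lemma --- is missing, and the strategy proposed for it is unlikely to be completable without an argument of the filtration/induction-on-$n$ type that the paper uses.
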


\begin{proof}
On a une identification imm\'ediate $K(\pt_{\sigmaobj},\Omega_n^{epi},\Omega_n^{epi}(\sigmaobj,-)) = \pt_{\sigmaobj}(\sigmaobj)$
qui donne la premi\`ere assertion du lemme.
La d\'emonstration de l'acyclicit\'e du complexe $K(\pt_{\sigmaobj},\Omega_n^{epi},\Omega_n^{epi}(\phiobj,-))$ pour $\phiobj\not=\sigmaobj$,
point cl\'e de notre travail,
est re\-por\-t\'ee au~\S\ref{KoszulAcyclicity}.\end{proof}

On a de plus~:

\begin{obsv}\label{CoefficientConstructions:ProjectiveComplexes}
Les complexes $\Gamma(\pt_{\sigmaobj},\Omega_n^{epi},\Omega_n^{epi})$, $\Gamma = B,K$,
sont des complexes de $\Omega_n^{epi}$-diagrammes projectifs.
\end{obsv}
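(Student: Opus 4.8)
The plan is to evaluate the generic coefficient $S$ at the point diagram $\pt_{\sigmaobj}$ and then to read off the projectivity of each homogeneous component directly from the definitions of \S\ref{CoefficientConstructions:BasicDefinition}. The key structural remark, which I would record first, is that the contravariant $\Omega_n^{epi}$-diagram structure on $\Gamma(\pt_{\sigmaobj},\Omega_n^{epi},\Omega_n^{epi}(\phiobj,-))$ is induced by the composition product $\Omega_n^{epi}(\underline{\psi},\phiobj)\otimes\Omega_n^{epi}(\phiobj,-)\rightarrow\Omega_n^{epi}(\underline{\psi},-)$; it therefore acts only on the rightmost $\Hom$-factor and leaves the remaining tensor data --- the coefficient $\pt_{\sigmaobj}(-)$ and the bar (resp. Koszul) generators together with their degrees and signs --- entirely inert.

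For $\Gamma = B$, the factor $\pt_{\sigmaobj}(\tauobj_0)$ vanishes unless $\tauobj_0 = \sigmaobj$, where it equals $\kk$, so I would rewrite the degree-$d$ component as
\begin{equation*}
B(\pt_{\sigmaobj},\Omega_n^{epi},\Omega_n^{epi}(\phiobj,-))_d
= \bigoplus_{\sigmaobj\xleftarrow{u_1}\cdots\xleftarrow{u_d}\tauobj_d}
\kk\{\sigmaobj\xleftarrow{u_1}\cdots\xleftarrow{u_d}\tauobj_d\}\otimes\Omega_n^{epi}(\phiobj,\tauobj_d),
\end{equation*}
the sum running over non-degenerate composable chains with source $\sigmaobj$. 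Regrouping by the terminal object $\tauobj_d$ presents each block as $M_{\tauobj_d}\otimes\Omega_n^{epi}(\phiobj,\tauobj_d)$ with $M_{\tauobj_d}$ the free $\kk$-module on the admissible chains ending at $\tauobj_d$. As a contravariant diagram in $\phiobj$, this block is a direct sum of copies of the Yoneda diagram $\Omega_n^{epi}(-,\tauobj_d)$, which --- dually to the statement recalled at the start of \S\ref{CoefficientConstructions} --- is a projective generator. A direct sum of such diagrams is projective, whence each $B(\pt_{\sigmaobj},\Omega_n^{epi},\Omega_n^{epi})_d$ is projective.

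For $\Gamma = K$ the same reduction applies: the factor $\pt_{\sigmaobj}$ forces the target of $u$ to be $\sigmaobj$, giving
\begin{equation*}
K(\pt_{\sigmaobj},\Omega_n^{epi},\Omega_n^{epi}(\phiobj,-))
= \bigoplus_{u: \aobj\rightarrow\sigmaobj}\kk\{u\}\otimes\Omega_n^{epi}(\phiobj,\aobj),
\end{equation*}
graded by $\deg(u)$; collecting the terms with a fixed source $\aobj$ exhibits each homogeneous component as a direct sum of copies of the representable $\Omega_n^{epi}(-,\aobj)$, hence as a projective diagram. Since this argument is essentially a bookkeeping of the direct-sum decomposition, I do not expect a serious obstacle; the only point needing genuine verification is the structural remark of the first paragraph --- that the diagram action touches solely the final $\Hom$-factor so that the coefficient and generator data act as inert multiplicity $\kk$-modules --- and this is immediate from the definition of $\Gamma(S,\Omega_n^{epi},\Omega_n^{epi})$. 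Being projective degreewise, the complexes $\Gamma(\pt_{\sigmaobj},\Omega_n^{epi},\Omega_n^{epi})$, $\Gamma = B,K$, are complexes of projective $\Omega_n^{epi}$-diagrams.
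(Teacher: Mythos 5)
Your argument is correct and is exactly the reasoning the paper leaves implicit (the Observation is stated without proof): evaluating the coefficient at $\pt_{\sigmaobj}$ reduces each component of $\Gamma(\pt_{\sigmaobj},\Omega_n^{epi},\Omega_n^{epi})$, $\Gamma = B,K$, to a direct sum of contravariant Yoneda diagrams $\Omega_n^{epi}(-,\tauobj)$, on which the diagram structure acts by precomposition in the last tensor factor only, and these representables are projective generators as recalled at the start of \S\ref{CoefficientConstructions}. There is nothing to correct.
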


Le morphisme naturel de la Proposition~\S\ref{CoefficientConstructions:KoszulEmbedding},
appliqu\'e aux couples de diagrammes
\begin{equation*}
(S,T) = (\pt_{\sigmaobj},\Omega_n^{epi}(\phiobj,-)),
\end{equation*}
d\'efinit un morphisme de complexes de $\Omega_n^{epi}$-diagrammes contravariants
\begin{equation*}
K(\pt_{\sigmaobj},\Omega_n^{epi},\Omega_n^{epi})\xrightarrow{\kappa} B(\pt_{\sigmaobj},\Omega_n^{epi},\Omega_n^{epi}).
\end{equation*}
Ce morphisme commute clairement aux augmentations
ce qui entraine que notre morphisme $\kappa$,
s'inscrivant dans un diagramme de la forme
\begin{equation*}
\xymatrix{ K(\pt_{\sigmaobj},\Omega_n^{epi},\Omega_n^{epi})\ar@{.}[rr]^{\kappa}_{\sim}\ar[dr]_{\sim} &&
B(\pt_{\sigmaobj},\Omega_n^{epi},\Omega_n^{epi})\ar[dl]^{\sim} \\
& \pt_{\sigmaobj} & },
\end{equation*}
d\'efinit lui-m\^eme une \'equivalence faible de complexes de $\Omega_n^{epi}$-diagrammes.

\subsubsection{Produit tensoriel de diagrammes}\label{CoefficientConstructions:TensorProduct}
On consid\`ere maintenant l'op\'eration de produit tensoriel $S\otimes_{\Omega_n^{epi}} T$
sur la cat\'egorie $\Omega_n^{epi}$
qui, pour tout couple $(S,T)$ constitu\'e d'un $\Omega_n^{epi}$-diagramme contravariant $S$
et d'un $\Omega_n^{epi}$-diagramme covariant $T$,
est d\'efinie par la cofin~:
\begin{equation*}
S\otimes_{\Omega_n^{epi}} T = \int^{\tauobj\in\Ob\Omega_n^{epi}} S(\tauobj)\otimes T(\tauobj).
\end{equation*}
Pour un couple de $\Omega_n^{epi}$-diagrammes covariants $(S,T)$,
on a aussi un dg-hom sur~$\Omega_n^{epi}$
d\'efini par la fin~:
\begin{equation*}
\Hom_{\Omega_n^{epi}}(S,T) = \int_{\tauobj\in\Ob\Omega_n^{epi}} \Hom_{\dg\Mod}(S(\tauobj),T(\tauobj)).
\end{equation*}

Le produit tensoriel de diagrammes sur une cat\'egorie
v\'erifie des propri\'et\'es analogues au produit tensoriel des modules sur une alg\`ebre (voir~\cite[\S 17]{Schubert}).
On a notamment un isomorphisme naturel $\Omega_n^{epi}(-,\tauobj)\otimes_{\Omega_n^{epi}} T\simeq T(\tauobj)$
pour les foncteurs de Yoneda contravariants $S = \Omega_n^{epi}(-,\tauobj)$,
et sym\'etriquement $S\otimes_{\Omega_n^{epi}}\Omega_n^{epi}(\tauobj,-)\simeq S(\tauobj)$
pour les foncteurs de Yoneda covariants $T = \Omega_n^{epi}(\tauobj,-)$.
On obtient \`a partir de ces relations~:

\begin{obsv}\label{CoefficientConstructions:CoefficientTensorProduct}
On a un isomorphisme naturel
\begin{equation*}
\Gamma(S,\Omega_n^{epi},\Omega_n^{epi})\otimes_{\Omega_n^{epi}} T\xrightarrow{\simeq}\Gamma(S,\Omega_n^{epi},T),
\end{equation*}
pour tout couple de $\Omega_n^{epi}$-diagrammes $(S,T)$.
\end{obsv}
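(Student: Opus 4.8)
Le plan est d'établir cet isomorphisme composante par composante, en exploitant le fait que le produit tensoriel de diagrammes, défini par une cofin, commute aux sommes directes (et plus généralement aux colimites), puis en appliquant les relations de Yoneda rappelées ci-dessus.

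D'abord, pour $\Gamma = B$, la composante de degré $d$ du diagramme contravariant $B(S,\Omega_n^{epi},\Omega_n^{epi})$ évalué en $\phiobj$ s'écrit
\[
\bigoplus_{(u_1,\dots,u_d)} S(\tauobj_0)\otimes\kk\{\tauobj_0\xleftarrow{u_1}\cdots\xleftarrow{u_d}\tauobj_d\}\otimes\Omega_n^{epi}(\phiobj,\tauobj_d).
\]
En appliquant le foncteur $-\otimes_{\Omega_n^{epi}} T = \int^{\phiobj}(-)\otimes T(\phiobj)$ et en faisant commuter la cofin à la somme directe, on obtient pour la composante de degré $d$ de $B(S,\Omega_n^{epi},\Omega_n^{epi})\otimes_{\Omega_n^{epi}} T$ la somme
\[
\bigoplus_{(u_1,\dots,u_d)} S(\tauobj_0)\otimes\kk\{\tauobj_0\xleftarrow{u_1}\cdots\xleftarrow{u_d}\tauobj_d\}\otimes\Bigl(\int^{\phiobj}\Omega_n^{epi}(\phiobj,\tauobj_d)\otimes T(\phiobj)\Bigr).
\]
La relation $\Omega_n^{epi}(-,\tauobj_d)\otimes_{\Omega_n^{epi}} T\simeq T(\tauobj_d)$ rappelée plus haut identifie le dernier facteur à $T(\tauobj_d)$, d'où un isomorphisme avec $B(S,\Omega_n^{epi},T)_d$. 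Le même argument, appliqué cette fois à la somme directe indexée par les morphismes $u: \tauobj\rightarrow\sigmaobj$, traite le cas $\Gamma = K$.

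Il restera à vérifier que l'isomorphisme ainsi obtenu commute aux différentielles, et c'est le seul point demandant un peu d'attention. Les termes internes de la différentielle --- les facteurs de composition $u_iu_{i+1}$, et l'action $u_1^*$ sur $S$ --- n'affectent pas le facteur de Yoneda et sont donc préservés de façon évidente. Le terme à contrôler est le dernier, qui fait agir $u_d$ à droite sur le facteur $\Omega_n^{epi}(\phiobj,\tauobj_d)$ (respectivement le morphisme $w$ de degré $1$ dans le cas de Koszul). Or l'isomorphisme de Yoneda envoie la classe de $\omega\otimes y$, avec $\omega\in\Omega_n^{epi}(\phiobj,\tauobj_d)$ et $y\in T(\phiobj)$, sur $\omega_*(y)\in T(\tauobj_d)$; l'action de $u_d$ à droite sur $\omega$ se traduit donc, via cet isomorphisme, en l'action $u_d{}_*$ sur $T$ qui figure précisément dans la définition de la différentielle de $\Gamma(S,\Omega_n^{epi},T)$. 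Les signes ne dépendant pas des coefficients, ils se transportent directement, et la naturalité en $(S,T)$ est alors immédiate.

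On peut aussi donner la raison conceptuelle sous-jacente: les deux foncteurs $T\mapsto\Gamma(S,\Omega_n^{epi},T)$ et $T\mapsto\Gamma(S,\Omega_n^{epi},\Omega_n^{epi})\otimes_{\Omega_n^{epi}} T$ préservent les colimites --- le premier car il est donné composante par composante par des sommes directes de foncteurs d'évaluation $T\mapsto T(\thetaobj)$, le second car $-\otimes_{\Omega_n^{epi}} T$ est un adjoint à gauche --- et ils coïncident sur les diagrammes de Yoneda covariants grâce aux relations rappelées ci-dessus; comme tout diagramme covariant est une colimite de représentables, ceci entraîne l'isomorphisme annoncé.
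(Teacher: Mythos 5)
Votre démonstration est correcte et suit essentiellement la même voie que l'article : celui-ci énonce le résultat comme une observation découlant directement des relations de co-Yoneda $\Omega_n^{epi}(-,\tauobj)\otimes_{\Omega_n^{epi}} T\simeq T(\tauobj)$ rappelées juste avant, c'est-à-dire précisément l'identification composante par composante (la cofin commutant aux sommes directes) que vous explicitez. Votre vérification de la compatibilité aux différentielles et votre argument conceptuel par colimites ne font que détailler ce que l'article laisse implicite.
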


On applique ce r\'esultat aux $\Omega_n^{epi}$-diagrammes ponctuels $(S,T) = (\pt_{\sigmaobj},\pt_{\tauobj})$.
Observons que~:

\begin{obsv}\label{CoefficientConstructions:PunctualCoefficients}
On a des identit\'es
\begin{equation*}
K(\pt_{\sigmaobj},\Omega_n^{epi},\pt_{\tauobj}) = K(\Omega_n^{epi})(\tauobj,\sigmaobj)
\quad\text{et}
\quad
B(\pt_{\sigmaobj},\Omega_n^{epi},\pt_{\tauobj}) = B(\Omega_n^{epi})(\tauobj,\sigmaobj)
\end{equation*}
pour tout couple $(\tauobj,\sigmaobj)\in\Ob\Omega_n^{epi}\times\Ob\Omega_n^{epi}$.
\end{obsv}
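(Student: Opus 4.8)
Les deux identit\'es sont de simples d\'eroulements des d\'efinitions du~\S\ref{CoefficientConstructions:BasicDefinition}, le point cl\'e \'etant que les diagrammes ponctuels $\pt_{\sigmaobj}$ et $\pt_{\tauobj}$ s'annulent en dehors des objets $\sigmaobj$ et $\tauobj$. Pour la construction de Koszul, le plan est de partir de la somme directe d\'efinissant $K(\pt_{\sigmaobj},\Omega_n^{epi},\pt_{\tauobj})$, index\'ee par les morphismes $u': \tauobj'\rightarrow\sigmaobj'$, et d'observer que le facteur de coefficients $\pt_{\sigmaobj}(\sigmaobj')\otimes\pt_{\tauobj}(\tauobj')$ est nul sauf lorsque $\sigmaobj' = \sigmaobj$ et $\tauobj' = \tauobj$, auquel cas il se r\'eduit \`a $\kk\otimes\kk = \kk$. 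La somme se restreint alors aux morphismes $u: \tauobj\rightarrow\sigmaobj$ et s'identifie \`a $\bigoplus_{u: \tauobj\rightarrow\sigmaobj}\kk\{u\} = K(\Omega_n^{epi})(\tauobj,\sigmaobj)$, avec la m\^eme graduation puisque $\deg(x\otimes\{u\}\otimes y) = \deg(u) = \deg(\tauobj)-\deg(\sigmaobj)$ co\"\i ncide avec $\deg\{u\}$.

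Il reste \`a contr\^oler les diff\'erentielles. Dans la formule de $\partial$ sur $K(\pt_{\sigmaobj},\Omega_n^{epi},\pt_{\tauobj})$, chaque terme de la premi\`ere somme fait intervenir $v^*(x)$ pour un facteur $v: \thetaobj\rightarrow\sigmaobj$ de degr\'e~$1$, donc avec $\thetaobj\not=\sigmaobj$ et $\pt_{\sigmaobj}(\thetaobj) = 0$; de m\^eme chaque terme de la seconde somme fait intervenir $w_*(y)$ pour un facteur $w: \tauobj\rightarrow\thetaobj$ de degr\'e~$1$, donc avec $\thetaobj\not=\tauobj$ et $\pt_{\tauobj}(\thetaobj) = 0$. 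Les deux sommes s'annulent, de sorte que $\partial = 0$, ce qui reproduit la diff\'erentielle triviale de $K(\Omega_n^{epi})$. L'identit\'e pour la construction bar suit exactement le m\^eme sch\'ema~: apr\`es substitution, les facteurs $\pt_{\sigmaobj}(\tauobj_0)$ et $\pt_{\tauobj}(\tauobj_d)$ imposent $\tauobj_0 = \sigmaobj$ et $\tauobj_d = \tauobj$ et se r\'eduisent \`a $\kk$, laissant pr\'ecis\'ement les $d$-uplets composables non d\'eg\'en\'er\'es engendrant $B(\Omega_n^{epi})(\tauobj,\sigmaobj)$ en degr\'e~$d$; les deux termes extr\^emes de la diff\'erentielle bar (ceux faisant intervenir $u_1^*$ et $u_d{}_*$) disparaissent par le m\^eme argument de non-d\'eg\'en\'erescence, ne laissant que les termes internes de composition, qui sont ceux de la diff\'erentielle de $B(\Omega_n^{epi})$.

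Cette observation n'offre pas de r\'eelle difficult\'e~: il s'agit uniquement d'une v\'erification de d\'efinitions. Le seul point r\'eclamant un peu d'attention est le conflit de notation entre les objets fix\'es $\sigmaobj,\tauobj$ qui d\'efinissent les diagrammes ponctuels et les variables muettes de la somme directe, qu'il faut soigneusement distinguer (d'o\`u l'emploi de primes ci-dessus) pour mener correctement les annulations.
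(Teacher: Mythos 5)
Votre d\'emonstration est correcte et correspond exactement \`a l'approche du texte~: le papier \'enonce ce r\'esultat comme une simple observation, sans d\'emonstration, pr\'ecis\'ement parce qu'il s'agit du d\'eroulement des d\'efinitions du~\S\ref{CoefficientConstructions:BasicDefinition} que vous explicitez (annulation des coefficients ponctuels hors de $\sigmaobj$ et $\tauobj$, puis annulation des termes de la diff\'erentielle faisant agir les coefficients, un morphisme de degr\'e~$1$ ou non d\'eg\'en\'er\'e changeant n\'ecessairement d'objet puisqu'il est de degr\'e strictement positif). Votre v\'erification, y compris le contr\^ole des diff\'erentielles de part et d'autre, est exactement ce que l'observation tient pour imm\'ediat.
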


On obtient alors~:

\begin{thm}\label{CoefficientConstructions:KoszulEquivalence}
Le morphisme
$\iota: K(\Omega_n^{epi})(\tauobj,\sigmaobj)
\rightarrow
B(\Omega_n^{epi})(\tauobj,\sigmaobj)$
d\'efini dans la Proposition~\ref{KoszulConstruction:KoszulEmbedding}
est une \'equivalence faible de dg-modules,
pour tout couple $(\tauobj,\sigmaobj)\in\Ob\Omega_n^{epi}\times\Ob\Omega_n^{epi}$.
\end{thm}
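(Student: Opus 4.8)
The plan is to deduce the theorem purely formally from the results already assembled in this section, reducing it to the weak equivalence of diagram complexes displayed after Observation~\ref{CoefficientConstructions:ProjectiveComplexes}. First I would use Observation~\ref{CoefficientConstructions:PunctualCoefficients} to identify $\iota$ with the morphism $\kappa\colon K(\pt_{\sigmaobj},\Omega_n^{epi},\pt_{\tauobj})\to B(\pt_{\sigmaobj},\Omega_n^{epi},\pt_{\tauobj})$ furnished by Proposition~\ref{CoefficientConstructions:KoszulEmbedding} for the punctual coefficients $(S,T)=(\pt_{\sigmaobj},\pt_{\tauobj})$; so it is enough to prove that this $\kappa$ is a weak equivalence. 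Then, by the natural isomorphism of Observation~\ref{CoefficientConstructions:CoefficientTensorProduct}, this $\kappa$ is obtained by applying the functor $-\otimes_{\Omega_n^{epi}}\pt_{\tauobj}$ to the morphism of complexes of contravariant $\Omega_n^{epi}$-diagrams
\[
\kappa\colon K(\pt_{\sigmaobj},\Omega_n^{epi},\Omega_n^{epi})\longrightarrow B(\pt_{\sigmaobj},\Omega_n^{epi},\Omega_n^{epi}).
\]

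The second step is to record that this last morphism is already known to be a weak equivalence: it sits in the commutative triangle over $\pt_{\sigmaobj}$ displayed after Observation~\ref{CoefficientConstructions:ProjectiveComplexes}, whose two other edges are the augmentation weak equivalences of Lemmas~\ref{CoefficientConstructions:CoefficientBarAcyclicity} and~\ref{CoefficientConstructions:CoefficientKoszulAcyclicity}, so the claim follows by two-out-of-three. Crucially, by Observation~\ref{CoefficientConstructions:ProjectiveComplexes} both its source and target are bounded-below complexes of projective $\Omega_n^{epi}$-diagrams (concentrated in degrees $\geq 0$). A quasi-isomorphism between bounded-below complexes of projective objects of an abelian category is a chain homotopy equivalence; hence $\kappa$ is, at this diagram level, a chain homotopy equivalence. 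Since $-\otimes_{\Omega_n^{epi}}\pt_{\tauobj}$ is an additive functor, it preserves chain homotopy equivalences, so $\kappa\otimes_{\Omega_n^{epi}}\pt_{\tauobj}$ is a chain homotopy equivalence of dg-modules, in particular a weak equivalence. Unwinding the identifications of the first step then yields the theorem.

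The point worth flagging is that the real work is not in this formal argument: the only genuinely hard input is Lemma~\ref{CoefficientConstructions:CoefficientKoszulAcyclicity}, namely the acyclicity of $K(\pt_{\sigmaobj},\Omega_n^{epi},\Omega_n^{epi}(\phiobj,-))$ for $\phiobj\neq\sigmaobj$, whose proof is deferred to~\S\ref{KoszulAcyclicity}. Granting that lemma, the argument above is routine, and the only small precaution is to make sure the complexes are bounded below, so that acyclic complexes of projectives are contractible and the passage from quasi-isomorphism to chain homotopy equivalence---which is what survives tensoring---is legitimate. I would present the reduction in exactly this order, so that the heavy combinatorial content stays cleanly isolated in the single acyclicity lemma and the present statement follows by a short functorial argument.
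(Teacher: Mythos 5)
Your proposal is correct and follows essentially the same route as the paper: identify $\iota$ with $\kappa$ via the punctual-coefficient and tensor-product identifications, note that the diagram-level $\kappa$ is a weak equivalence by the commuting triangle of augmentations (Lemmas~\ref{CoefficientConstructions:CoefficientBarAcyclicity} and~\ref{CoefficientConstructions:CoefficientKoszulAcyclicity}), and use projectivity (Observation~\ref{CoefficientConstructions:ProjectiveComplexes}) to see that $-\otimes_{\Omega_n^{epi}}\pt_{\tauobj}$ preserves this equivalence. The only difference is that you spell out the mechanism the paper leaves implicit (quasi-isomorphisms between bounded-below complexes of projectives are chain homotopy equivalences, which additive functors preserve), which is a welcome precision rather than a divergence.
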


\begin{proof}
Le morphisme $\kappa: K(\pt_{\sigmaobj},\Omega_n^{epi},\Omega_n^{epi})\rightarrow B(\pt_{\sigmaobj},\Omega_n^{epi},\Omega_n^{epi})$
induit
par produit tensoriel $-\otimes_{\Omega_n^{epi}}\pt_{\tauobj}$
une \'equivalence faible de dg-modules
\begin{equation*}
\underbrace{K(\pt_{\sigmaobj},\Omega_n^{epi},\Omega_n^{epi})\otimes_{\Omega_n^{epi}}\pt_{\tauobj}}_{= K(\pt_{\sigmaobj},\Omega_n^{epi},\pt_{\tauobj})}
\xrightarrow{\sim}
\underbrace{B(\pt_{\sigmaobj},\Omega_n^{epi},\Omega_n^{epi})\otimes_{\Omega_n^{epi}}\pt_{\tauobj}}_{= B(\pt_{\sigmaobj},\Omega_n^{epi},\pt_{\tauobj})}
\end{equation*}
puisque $K(\pt_{\sigmaobj},\Omega_n^{epi},\Omega_n^{epi})$ et $B(\pt_{\sigmaobj},\Omega_n^{epi},\Omega_n^{epi})$
sont des complexes de $\Omega_n^{epi}$-diagrammes projectifs.
Ceci entraine, d'apr\`es l'observation pr\'ec\'edente, la conclusion du th\'eor\`eme.
\end{proof}

\subsection{Complexes \`a coefficients et foncteurs Tor}\label{DGCoefficientConstructions}
On s'int\'eresse maintenant aux applications des complexes \`a coefficients
pour le calcul des foncteurs $\Tor^{\Omega_n^{epi}}_*(S,T)$
et $\Ext_{\Omega_n^{epi}}^*(S,T)$.
On formalise nos r\'esultats dans le cadre diff\'erentiel gradu\'e
en utilisant l'ensemble des id\'ees expos\'ees
dans la section pr\'eliminaire~(\S\S\ref{Background:DGModules}-\ref{Background:ModelStructure}).
On commence par montrer comment les complexes \`a coefficients
d\'efinis dans la section pr\'ec\'edente s'\'etendent aux $\Omega_n^{epi}$-diagrammes
en dg-modules.

\subsubsection{Produits tensoriels tordus et constructions \`a coefficients}\label{CoefficientConstructions:DGDiagrams}
Nos complexes \`a coefficients
\begin{equation*}
\Gamma(S,\Omega_n^{epi},T),\quad\Gamma = B,K
\end{equation*}
s'identifient en fait \`a des dg-modules tordus
de la forme
\begin{equation*}
\Gamma(S,\Omega_n^{epi},T)
= \Bigl\{\bigoplus_{(\tauobj,\sigmaobj)} S(\sigmaobj)\otimes\Gamma(\Omega_n^{epi})(\tauobj,\sigmaobj)\otimes T(\tauobj),\partial\Bigr\}
\end{equation*}
pour un certain homomorphisme de torsion $\partial$.
On peut int\'egrer les termes de la diff\'erentielle de~$B(S,\Omega_n^{epi},T)$
qui ne font pas intervenir l'action de~$\Omega_n^{epi}$
sur les coefficients~$(S,T)$
dans la diff\'erentielle interne de la construction~$B(\Omega_n^{epi})$.
L'homo\-mor\-phis\-me de torsion de notre produit tensoriel
se r\'eduit donc aux termes
\begin{multline*}
\partial(x\otimes\{\tauobj_0\xleftarrow{u_1}\cdots\xleftarrow{u_d}\tauobj_d\}\otimes y)
= u_1^*(x)\otimes\{\tauobj_1\xleftarrow{u_2}\cdots\xleftarrow{u_d}\tauobj_d\}\otimes y\\
+ (-1)^d x\otimes\{\tauobj_0\xleftarrow{u_1}\cdots\xleftarrow{u_{d-1}}\tauobj_{d-1}\}\otimes u_d{}_*(y)
\end{multline*}
dans le cas~$\Gamma = B$,
et reprend l'ensemble des termes de la formule du~\S\ref{CoefficientConstructions:BasicDefinition}
dans le cas~$\Gamma = K$.

Il suffit de former le produit tensoriel $S(\sigmaobj)\otimes\Gamma(\Omega_n^{epi})(\tauobj,\sigmaobj)\otimes T(\tauobj)$
dans la cat\'egorie des dg-modules
pour \'etendre la d\'efinition des complexes $\Gamma(S,\Omega_n^{epi},T)$, $\Gamma = B,K$,
aux $\Omega_n^{epi}$-diagrammes en dg-modules.
Les objets $S(\sigmaobj)$ et $T(\tauobj)$
sont alors munis d'une diff\'erentielle interne qui est simplement ajout\'ee
aux homomorphismes de torsion du~\S\ref{CoefficientConstructions:BasicDefinition}.

On ne consid\`ere dans la suite que des diagrammes en dg-modules $(S,T)$
dont les composantes $S(\sigmaobj)$ et $T(\tauobj)$
sont des dg-modules cofibrants,
pour tout $(\sigmaobj,\tauobj)\in\Ob\Omega_n^{epi}\times\Ob\Omega_n^{epi}$.
On dit alors que les diagrammes $(S,T)$ sont dg-cofibrants.

Le morphisme $\kappa: K(S,\Omega_n^{epi},T)\rightarrow B(S,\Omega_n^{epi},T)$
consid\'er\'e au~\S\ref{CoefficientConstructions}
est simplement d\'efini par des produits tensoriels $\id_{S(\sigmaobj)}\otimes\iota\otimes\id_{T(\tauobj)}$
induits par le morphisme~$\iota: K(\Omega_n^{epi})\rightarrow B(\Omega_n^{epi})$
de la Proposition~\ref{KoszulConstruction:KoszulEmbedding}.
On a le r\'esultat suivant~:

\begin{lemm}\label{CoefficientConstructions:CoefficientKoszulEquivalence}
Le morphisme de complexes \`a coefficients
\begin{equation*}
\kappa: K(S,\Omega_n^{epi},T)\rightarrow B(S,\Omega_n^{epi},T)
\end{equation*}
est une \'equivalence faible de dg-modules
d\`es lors que $(S,T)$ sont des $\Omega_n^{epi}$-diagrammes en dg-modules dg-cofibrants.\qed
\end{lemm}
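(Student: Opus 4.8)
The plan is to deduce this from the punctual case already settled in Theorem~\ref{CoefficientConstructions:KoszulEquivalence} by means of a spectral sequence comparison, exploiting the twisted-tensor-product description of \S\ref{CoefficientConstructions:DGDiagrams}. Recall from that paragraph that both complexes, for $\Gamma = B,K$, are carried by the same graded module $\bigoplus_{(\tauobj,\sigmaobj)} S(\sigmaobj)\otimes\Gamma(\Omega_n^{epi})(\tauobj,\sigmaobj)\otimes T(\tauobj)$, that their differential splits as the sum of \emph{internal} differentials (those of $S$, of $T$, and, when $\Gamma = B$, the bar differential of $B(\Omega_n^{epi})$) and of a \emph{twisting} homomorphism $\partial$ encoding the action of $\Omega_n^{epi}$ on the coefficients, and that $\kappa$ is induced factorwise by $\id_{S(\sigmaobj)}\otimes\iota\otimes\id_{T(\tauobj)}$. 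By Proposition~\ref{KoszulConstruction:KoszulEmbedding} the map $\kappa$ commutes with the internal differentials, and by Proposition~\S\ref{CoefficientConstructions:KoszulEmbedding} it commutes with $\partial$ as well.

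First I would filter both sides by the degree $\deg(u) = \deg(\tauobj_d) - \deg(\tauobj_0)$ of the underlying composite morphism of $\Omega_n^{epi}$ (the object carrying the $T$-coefficient being the source, the object carrying the $S$-coefficient the target). The internal differentials leave every object fixed and hence preserve $\deg(u)$; the bar differential of $B(\Omega_n^{epi})$ only regroups intermediate arrows and so preserves the composite; whereas each term of the twisting $\partial$ absorbs one morphism of degree $1$ into a coefficient, strictly lowering $\deg(u)$ by one. Thus, writing $F_p$ for the span of the terms with $\deg(u)\leq p$, we obtain an increasing, exhaustive filtration with $F_{-1} = 0$, which is preserved by the full differential and by $\kappa$. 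Being bounded below and exhaustive, the associated spectral sequences converge, and $\kappa$ induces a morphism between them.

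On the $E^0$-page only the filtration-preserving part of the differential survives, so the twisting disappears and one is left with
\begin{equation*}
E^0(\Gamma) \cong \bigoplus_{(\tauobj,\sigmaobj)} S(\sigmaobj)\otimes\Gamma(\Omega_n^{epi})(\tauobj,\sigmaobj)\otimes T(\tauobj),
\end{equation*}
the middle factor being equipped with the bar differential when $\Gamma = B$ and with the trivial differential when $\Gamma = K$. The map $E^0(\kappa)$ is exactly $\id_{S(\sigmaobj)}\otimes\iota\otimes\id_{T(\tauobj)}$. Now Theorem~\ref{CoefficientConstructions:KoszulEquivalence} asserts that $\iota\colon K(\Omega_n^{epi})(\tauobj,\sigmaobj)\to B(\Omega_n^{epi})(\tauobj,\sigmaobj)$ is a weak equivalence, and both complexes are degreewise free, hence dg-cofibrant, over $\kk$; since $S(\sigmaobj)$ and $T(\tauobj)$ are dg-cofibrant by hypothesis, tensoring the weak equivalence $\iota$ on either side with these cofibrant factors preserves weak equivalences. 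Therefore $E^0(\kappa)$ is a weak equivalence, so $\kappa$ induces an isomorphism on $E^1$, and the comparison theorem for the two convergent spectral sequences yields that $\kappa$ is itself a weak equivalence.

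The main obstacle is the $E^0$-stage: one must know that $\id\otimes\iota\otimes\id$ remains a weak equivalence after tensoring, and over an arbitrary ground ring $\kk$ this is guaranteed only because $S(\sigmaobj)$ and $T(\tauobj)$ are cofibrant — which is precisely the role of the dg-cofibrancy assumption (over a field the step is automatic and the filtration argument goes through verbatim). The remaining verification, namely the convergence of the filtration and the applicability of the comparison theorem, is routine once one observes that the filtration is bounded below and exhaustive.
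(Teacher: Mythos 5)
Votre proposition est correcte et suit essentiellement la m\^eme d\'emarche que la preuve du papier~: celle-ci consiste pr\'ecis\'ement en cette comparaison de suites spectrales, obtenue en filtrant le produit tensoriel tordu par le degr\'e (poids) du facteur m\'edian $\Gamma(\Omega_n^{epi})(\tauobj,\sigmaobj)$, de sorte que $d^0$ ne retient que les diff\'erentielles internes de $S$, $T$ et de $\Gamma(\Omega_n^{epi})$ tandis que la torsion fait strictement chuter la filtration, l'isomorphisme au rang $E^1$ provenant du Th\'eor\`eme~\ref{CoefficientConstructions:KoszulEquivalence} joint \`a l'hypoth\`ese de dg-cofibrance des coefficients. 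Deux remarques cosm\'etiques seulement~: du c\^ot\'e bar, la torsion absorbe un morphisme de degr\'e $\geq 1$ (pas n\'ecessairement $1$), donc le poids chute d'au moins un -- ce qui suffit \`a votre argument -- et la cofibrance des facteurs m\'edians tient \`a ce qu'ils sont des complexes de $\kk$-modules libres \emph{born\'es inf\'erieurement}, et non \`a la seule libert\'e degr\'e par degr\'e.
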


\begin{proof}
La filtration par le degr\'e sur chaque dg-graphe $\Gamma(\Omega_n^{epi})(\tauobj,\sigmaobj)$, pour $\Gamma = B,K$,
induit une filtration naturelle au niveau des produits tensoriels
$\Gamma(S,\Omega_n^{epi},T)
= \bigl\{\bigoplus_{(\tauobj,\sigmaobj)} S(\sigmaobj)\otimes\Gamma(\Omega_n^{epi})(\tauobj,\sigmaobj)\otimes T(\tauobj),\partial\bigr\}$.
On en d\'eduit l'existence d'une suite spectrale $E^r(\Gamma(S,\Omega_n^{epi},T))\Rightarrow H_*(\Gamma(S,\Omega_n^{epi},T))$
telle que $d^0$ est d\'etermin\'e par les diff\'erentielles internes des objets $S$, $T$
et $\Gamma(\Omega_n^{epi})$.
Le morphisme $\kappa$ pr\'eserve cette filtration
et induit un morphisme de suites spectrales $\kappa: E^r(K(S,\Omega_n^{epi},T))\rightarrow E^r(B(S,\Omega_n^{epi},T))$
qui est un isomorphisme au rang $E^1$.
La conclusion s'ensuit.
\end{proof}

La cat\'egorie des $\Omega_n^{epi}$-diagrammes contravariants en dg-modules,
et la cat\'egorie des $\Omega_n^{epi}$-diagrammes covariants de m\^eme,
h\'erite d'une structure de cat\'egorie mod\`ele projective naturelle (voir~\cite[\S 11.6]{Hirschhorn}).
Les foncteurs $\Tor^{\Omega_n^{epi}}_*(S,T)$ ont une d\'efinition naturelle
dans le cadre des dg-diagrammes
comme l'homologie de produits tensoriels $Q_S\otimes_{\Omega_n^{epi}} T$
appliqu\'es \`a un remplacement cofibrant $0\rightarrowtail Q_S\xrightarrow{\sim} S$
dans la cat\'egorie des $\Omega_n^{epi}$-diagrammes en dg-modules.
On a une d\'efinition \'equivalente faisant intervenir un remplacement cofibrant de $T$.
Les foncteurs $\Ext_{\Omega_n^{epi}}^*(S,T)$
sont d\'efinis de fa\ con analogues
comme l'homologie de dg-hom $\Hom_{\Omega_n^{epi}}(Q_S,T)$
sur un remplacement cofibrant de $S$.

Les observations des~\S\S\ref{CoefficientConstructions:KoszulEmbedding}-\ref{CoefficientConstructions:CoefficientBarAcyclicity}
et~\S\ref{CoefficientConstructions:CoefficientTensorProduct}
se g\'en\'eralisent aux diagrammes dg-cofibrants.
On montre aussi que le complexe bar $B(S,\Omega_n^{epi},\Omega_n^{epi})$
est cofibrant comme $\Omega_n^{epi}$-diagramme
d\`es lors que $S$ est dg-cofibrant
et d\'efinit donc en remplacement cofibrant particulier de~$S$
dans la cat\'egorie des $\Omega_n^{epi}$-diagrammes.
On en d\'eduit que l'homologie de $B(S,\Omega_n^{epi},T) = B(S,\Omega_n^{epi},\Omega_n^{epi})\otimes_{\Omega_n^{epi}} T$
d\'etermine le foncteur Tor diff\'erentiel gradu\'e $\Tor^{\Omega_n^{epi}}_*(S,T)$.
Le Lemme~\ref{CoefficientConstructions:CoefficientKoszulEquivalence}
entraine donc~:

\begin{mainsectionthm}\label{CoefficientConstructions:TorFunctor}
On a l'identit\'e
\begin{equation*}
\Tor^{\Omega_n^{epi}}_*(S,T) = H_*(K(S,\Omega_n^{epi},T)),
\end{equation*}
d\`es lors que $(S,T)$ sont des $\Omega_n^{epi}$-diagrammes en dg-modules dg-cofibrants.\qed
\end{mainsectionthm}

On obtient de m\^eme~:

\begin{mainsectionthm}\label{CoefficientConstructions:ExtFunctor}
On a l'identit\'e
\begin{equation*}
\Ext_{\Omega_n^{epi}}^*(S,T) = H_*(\Hom_{\Omega_n^{epi}}(K(\Omega_n^{epi},\Omega_n^{epi},S),T),
\end{equation*}
d\`es lors que $(S,T)$ sont des $\Omega_n^{epi}$-diagrammes en dg-modules dg-cofibrants.\qed
\end{mainsectionthm}

\subsubsection{Relations avec le r\'esultat principal de~\cite{LivernetRichter}}\label{CoefficientConstructions:TrunkCoefficientComplex}
Notons $\underline{i}_n$ l'objet de $\Omega_n^{epi}$
repr\'e\-sen\-t\'e par l'arbre-tronc
\begin{equation*}
\xymatrix@W=0mm@H=0mm@R=3mm@C=3mm@M=0mm{ \ar@{.}[r] & \ar@{-}[d] & \ar@{.}[l] \\
\ar@{.}[r] & \ar@{-}[d] & \ar@{.}[l] \\
& \ar@{.}[d] & \\
& \ar@{-}[d] & \\
\ar@{.}[r] & \ar@{-}[d] & \ar@{.}[l] \\
\ar@{.}[r] && \ar@{.}[l] }
\end{equation*}
On consid\`ere apr\`es~\cite{LivernetRichter}
le $\Omega_n^{epi}$-diagramme ponctuel $b_n = \pt_{\underline{i}_n}$
qui vaut par d\'efinition~:
\begin{equation*}
b_n(\sigmaobj) = \begin{cases} \kk, & \text{si $\sigmaobj = \underline{i}_n$} \\
0, & \text{sinon}. \end{cases}
\end{equation*}

L'arbre tronc $\underline{i}_n$ d\'efinit en fait l'objet final de $\Omega_n^{epi}$.
Par suite,
on obtient que le complexe de Koszul \`a coefficient dans $b_n$
poss\`ede un d\'eveloppement de la forme~:
\begin{equation*}
K(b_n,\Omega_n^{epi},T) = \Bigl\{\bigoplus_{\tauobj} T(\tauobj),\partial\Bigr\}.
\end{equation*}

Le complexe de Koszul $K(b_n,\Omega_n^{epi},T)$
s'identifie en fait au complexe $C_*(T)$
d\'efini dans~\cite[\S\S 3.5-9]{LivernetRichter}.
Le Th\'eor\`eme~\ref{CoefficientConstructions:TorFunctor}
donne donc une g\'en\'eralisation
de l'identit\'e
\begin{equation*}
\Tor^{\Omega_n^{epi}}_*(b_n,T) = H_*(C_*(T))
\end{equation*}
d\'emontr\'ee par les auteurs de~\cite{LivernetRichter}.

\subsection{Appendice~: la propri\'et\'e d'acyclicit\'e du complexe de Koszul}\label{KoszulAcyclicity}
Le but de cette section est d'\'etablir la propri\'et\'e suivante dont on avait report\'e la d\'e\-mons\-tra\-tion
au~\S\ref{CoefficientConstructions}~:

\begin{mainsectionlemm}[{affirmation du Lemme~\ref{CoefficientConstructions:CoefficientKoszulAcyclicity}}]\label{KoszulAcyclicity:Result}
L'homologie du dg-module
\begin{equation*}
L_n(\tauobj,\sigmaobj) = K(\pt_{\sigmaobj},\Omega_n^{epi},\Omega_n^{epi}(\tauobj,-))
\end{equation*}
associ\'e au diagramme ponctuel $\pt_{\sigmaobj}$ et au diagramme de Yoneda $\Omega_n^{epi}(\tauobj,-)$
est triviale lorsque~$\tauobj\not=\sigmaobj$.
\end{mainsectionlemm}

Ce lemme g\'en\'eralise le r\'esultat obtenu dans~\cite{LivernetRichter}
pour le diagramme ponctuel $b_{\underline{i}_n}$
associ\'e \`a un arbre tronc $\underline{i}_n$.
On suit le m\^eme plan de d\'emonstration.
On devra cependant introduire des notions ad hoc pour travailler avec diagrammes ponctuels~$b_{\tauobj}$
associ\'es \`a des arbres quelconques $\tauobj$
et des morphismes $u: \tauobj\rightarrow\sigmaobj$
susceptibles d'entrem\^eler les composantes de~$\tauobj$
de fa\c con compliqu\'ee.

On commence par reprendre les d\'efinitions pour analyser la structure du dg-module~$L_n(\tauobj,\sigmaobj)$.

\subsubsection{La d\'efinition du complexe $L_n(\tauobj,\sigmaobj)$}\label{KoszulAcyclicity:Complex}
Le dg-module
\begin{equation*}
L_n(\tauobj,\sigmaobj) = K(\pt_{\sigmaobj},\Omega_n^{epi},\Omega_n^{epi}(\tauobj,-))
\end{equation*}
est engendr\'e en degr\'e $d$ par les tenseurs $\{v\}\otimes\{w\}$
associ\'es aux couples de morphismes composables $\sigmaobj\xleftarrow{v}\thetaobj\xleftarrow{w}\tauobj$
tels que $\deg(v) = d$,
avec la diff\'erentielle
\begin{equation*}
\partial(\{v\}\otimes\{w\}) = \sum_{\substack{v = a b\\ \deg b = 1}} \pm\{a\}\otimes\{b w\}.
\end{equation*}
La somme s'\'etend sur l'ensemble des d\'ecompositions $v = a b$ telles que $\deg b = 1$.
Le facteur $\{w\}$ repr\'esente, dans la d\'efinition du~\S\ref{KoszulConstruction:KoszulCategory},
un \'el\'ement du diagramme de Yoneda $\Omega_n^{epi}(\tauobj,-)$.

On suppose dans la suite de cette \'etude que les \'el\'ements de $\In\tauobj$
sont en bijection avec un ensemble source $\eset$
muni d'un degr\'e interne
qui s'ajoute \`a la graduation du complexe $L_n(\tauobj,\sigmaobj)$
telle qu'on l'a d\'efinie.
On applique aussi la g\'en\'eralisation, mentionn\'ee au~\S\ref{KoszulConstruction:Signs},
de la d\'efinition du signe $\sgn(b)$
associ\'e \`a un morphisme de degr\'e $1$
dans la formule de la diff\'erentielle de $L_n(\tauobj,\sigmaobj)$.

Le complexe $L_n(\tauobj,\sigmaobj)$
poss\`ede un scindage naturel
\begin{equation*}
L_n(\tauobj,\sigmaobj)
= \bigoplus_{u: \tauobj\rightarrow\sigmaobj} L_n(\tauobj,\sigmaobj)_u
\end{equation*}
pour des sous-complexes $L_n(\tauobj,\sigmaobj)_u$, $u\in\Mor_{\Omega_n^{epi}}(\tauobj,\sigmaobj)$,
engendr\'es par les tenseurs $\{v\}\otimes\{w\}$
tels que~$v w = u$.

\subsubsection{Une suite spectrale}\label{KoszulAcyclicity:SpectralSequence}
L'id\'ee consiste \`a munir $L_n(\tauobj,\sigmaobj)_u$
de la filtration
\begin{equation*}
0 = F_0 L_n(\tauobj,\sigmaobj)_u\subset\cdots
\subset F_d L_n(\tauobj,\sigmaobj)_u\subset\cdots
\subset\colim_d F_d L_n(\tauobj,\sigmaobj)_u = L_n(\tauobj,\sigmaobj)_u
\end{equation*}
dont le terme $F_d L_n(\tauobj,\sigmaobj)_u$ est constitu\'e des facteurs $\kk\{v\}\otimes\kk\{w\}$
associ\'es \`a un objet milieu $\theta = \{\dset_0\xrightarrow{\theta_1}\cdots\xleftarrow{\theta_n}\dset_n\}$
tel que $d_1+\dots+d_{n-1}\leq d$.

La diff\'erentielle $d^0$ de la suite spectrale d\'efinie par cette filtration
se r\'eduit aux termes
\begin{equation*}
d^0(\{v\}\otimes\{w\}) = \sum_{v = a b_0} \pm\{a\}\otimes\{b_0 w\}
\end{equation*}
associ\'es aux d\'ecompositions $v = a b_0$
telles que, dans la repr\'esentation graphique des objets de $\Omega_n^{epi}$,
le morphisme $b_0: \thetaobj\rightarrow\underline{\rho}$
fixe le nombre de sommets de niveau $i>0$.
Dans la description du~\S\ref{KoszulConstruction:IndecomposableMorphisms},
ces morphismes $b_0$
sont donn\'es par la fusion de deux sommets cons\'ecutifs de niveau $0$,
repr\'esent\'ee sch\'ematiquement dans la Figure~\ref{Fig:TopMerging}.
L'\'etiquetage des feuilles nous permet, d'apr\`es les observations du~\S\ref{PrunedTrees:CommaPoset},
de repr\'esenter l'application induite par $b_0$
sur les ensembles sources des arbres.
\begin{figure}[h]
\begin{equation*}
\vcenter{\xymatrix@W=0mm@H=0mm@R=4.5mm@C=4.5mm@M=0mm{ \ar@{}[d]|{\displaystyle\cdots} &
\ar@{-}[drr]\ar@{.}[rr]^{\displaystyle k} & \bullet\ar@{-}[dr] && \bullet\ar@{-}[dl] & \ar@{.}[ll]_{\displaystyle l}\ar@{-}[dll] &
\ar@{}[d]|{\displaystyle\cdots} \\
\ar@{.}[rrr]\ar@{-}[drrr] &&& \ar@{-}[d] &&& \ar@{-}[dlll]\ar@{.}[lll] \\
\ar@{.}[rrr] &&& \ar@{}[d]|{\vdots} &&& \ar@{.}[lll] \\
&&&&&& }}
\quad\xrightarrow{\displaystyle b_0}
\quad\vcenter{\xymatrix@W=0mm@H=0mm@R=4.5mm@C=4.5mm@M=0mm{ \ar@{}[d]|{\displaystyle\cdots} &
\ar@{-}[drr]\ar@{.}[rrrr]^{\displaystyle k\,l} && \bullet\ar@{-}[d] && \ar@{-}[dll] &
\ar@{}[d]|{\displaystyle\cdots} \\
\ar@{.}[rrr]\ar@{-}[drrr] &&& \ar@{-}[d] &&& \ar@{-}[dlll]\ar@{.}[lll] \\
\ar@{.}[rrr] &&& \ar@{}[d]|{\vdots} &&& \ar@{.}[lll] \\
&&&&&& }}
\end{equation*}
\caption{}\label{Fig:TopMerging}\end{figure}

\medskip
On consid\`ere, comme dans~\cite[Proposition 4.7]{LivernetRichter},
le foncteur de troncature $\tr: \Omega_n^{epi}\rightarrow\Omega_{n-1}^{epi}$
d\'efini sur les objets par l'op\'eration
\begin{equation*}
\tr\{\tset_0\xrightarrow{\tau_1}\tset_1\xrightarrow{\tau_2}\cdots\xrightarrow{\tau_n}\tset_n\}
= \{\tset_1\xrightarrow{\tau_2}\cdots\xrightarrow{\tau_n}\tset_n\}.
\end{equation*}
On dit qu'un tenseur $\{\hat{v}\}\otimes\{\hat{w}\}$
couvre une d\'ecomposition tronqu\'ee $\tr u = v w$
si on a $\tr\hat{v} = v$ et $\tr\hat{w} = w$.

La diff\'erentielle $d^0$ pr\'eservant la structure aux niveaux $>0$,
on obtient~:

\begin{obsv}\label{KoszulAcyclicity:CoverSplitting}
Le module $L_n(\tauobj,\sigmaobj)_{\tr u = v w}$
engendr\'e par les tenseurs $\{\hat{v}\}\otimes\{\hat{w}\}$
tels que $u = \hat{v}\,\hat{w}$
couvre une d\'ecomposition tronqu\'ee donn\'ee $\tr u = v w$
d\'efinit un sous-complexe de $E^0 L_n(\tauobj,\sigmaobj)_u$,
de sorte que $E^0 L_n(\tauobj,\sigmaobj)_u$
admet un scindage~:
\begin{equation*}
E^0 L_n(\tauobj,\sigmaobj)_u = \bigoplus_{\tr u = v w}
(L_n(\tauobj,\sigmaobj)_{\tr u = v w},d^0).
\end{equation*}
\end{obsv}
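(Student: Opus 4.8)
The plan is to verify that the partition of the generators of $L_n(\tauobj,\sigmaobj)_u$ by covering type is compatible with the differential $d^0$; the compatibility will follow at once from the explicit form of $d^0$ recalled just above the statement.

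First I would note that every generator $\{\hat{v}\}\otimes\{\hat{w}\}$ of $L_n(\tauobj,\sigmaobj)_u$, associated to a factorization $u = \hat{v}\,\hat{w}$, covers exactly one truncated decomposition, namely $\tr u = (\tr\hat{v})(\tr\hat{w})$, since $\tr$ is a functor. Grouping the generators accordingly yields, at the level of the underlying graded $\kk$-modules, the direct sum decomposition
\begin{equation*}
E^0 L_n(\tauobj,\sigmaobj)_u = \bigoplus_{\tr u = v w} L_n(\tauobj,\sigmaobj)_{\tr u = v w},
\end{equation*}
and it only remains to check that each summand is stable under $d^0$.

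The key observation is that the morphisms $b_0$ occurring in the formula for $d^0$ merge two consecutive vertices of level $0$ (Figure~\ref{Fig:TopMerging}), hence leave the structure at every level $i>0$ of the trees unchanged, so that $\tr b_0 = \id$ in $\Omega_{n-1}^{epi}$. Given a generator $\{\hat{v}\}\otimes\{\hat{w}\}$ covering $\tr u = v w$ and a decomposition $\hat{v} = a\,b_0$ contributing to $d^0(\{\hat{v}\}\otimes\{\hat{w}\})$, the functoriality of $\tr$ together with the identity $\tr b_0 = \id$ gives $\tr a = (\tr a)(\tr b_0) = \tr\hat{v} = v$ and $\tr(b_0\hat{w}) = (\tr b_0)(\tr\hat{w}) = \tr\hat{w} = w$. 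Thus each term $\{a\}\otimes\{b_0\hat{w}\}$ of $d^0(\{\hat{v}\}\otimes\{\hat{w}\})$ again covers $\tr u = v w$, so that $L_n(\tauobj,\sigmaobj)_{\tr u = v w}$ is a subcomplex of $(E^0 L_n(\tauobj,\sigmaobj)_u,d^0)$. The announced splitting follows.

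I do not expect a genuine difficulty here: once the level-$0$ nature of the $b_0$ is recorded, the stability of each summand is a purely formal consequence of the functoriality of $\tr$. The only point deserving a little care is the bookkeeping of the filtration, namely that the terms of $\partial$ surviving in $d^0$ are exactly those whose factor $b_0$ leaves the number of vertices of level $>0$ unchanged; but this is precisely what the description of $d^0$ preceding the statement supplies.
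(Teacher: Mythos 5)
Your argument is correct and coincides with the paper's own justification, which is condensed into the single remark that $d^0$ preserves the structure at levels $>0$ (the generators $b_0$ of $d^0$ merge vertices at level $0$ only, hence $\tr b_0 = \id$). Your explicit use of the functoriality of $\tr$ to check that each term $\{a\}\otimes\{b_0\hat{w}\}$ covers the same truncated decomposition is exactly the intended, if unwritten, verification.
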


On analyse la d\'efinition des d\'ecompositions $u = \hat{v}\,\hat{w}$
dans le prochain paragraphe en vue d'obtenir une identification des complexes $(L_n(\tauobj,\sigmaobj)_{\tr u = v w},d^0)$.

\subsubsection{Troncature et recouvrement}\label{KoszulAcyclicity:Truncation}
On se donne un morphisme
\begin{equation*}
\{\tset_0\xrightarrow{\tau_1}\cdots\xrightarrow{\tau_n}\tset_n\}
\xrightarrow{u}\{\sigma_0\xrightarrow{\sigma_1}\cdots\xrightarrow{\sigma_n}\sset_n\}
\end{equation*}
et une d\'ecomposition $\tr u = v w$
de $\tr u$
dans la cat\'egorie $\Omega_{n-1}^{epi}$.
Cette d\'ecomposition est d\'efinie par le squelette solide
d'un diagramme de la forme~:
\begin{equation*}
\xymatrix{ \tset_0\ar@/_3em/[dd]_{u}\ar@{.>}[d]_{\pi_0}\ar[r]^{\tau_1} &
\tset_1\ar[d]^w\ar[r]^{\tau_2} & \cdots\ar[r]^{\tau_n} & \tset_n\ar[d]^w \\
\dset_0\ar@{.>}[d]_{v}\ar@{.>}[r]^{\theta_1} &
\dset_1\ar[d]^v\ar[r]^{\theta_2} & \cdots\ar[r]^{\theta_n} & \dset_n\ar[d]^v \\
\sset_0\ar[r]_{\sigma_1} & \sset_1\ar[r]_{\sigma_2} & \cdots\ar[r]_{\sigma_n} & \sset_n }.
\end{equation*}
On consid\`ere l'ensemble des applications pointill\'ees qui peuvent compl\'eter un tel diagramme
d\'efinissant alors une d\'ecomposition $u = \hat{v}\,\hat{w}$
du morphisme $u$ dans $\Omega_n^{epi}$.
L'application $\pi_0: \tset_0\rightarrow\dset_0$
suffit \`a d\'eterminer toutes les autres
puisque les applications du diagramme sont suppos\'ees surjectives par d\'efinition de $\Omega_n^{epi}$.

On pourra s'aider de la Figure~\ref{Fig:CompositeTopMerging} pour suivre la construction de la factorisation $u = v\pi_0$.
\begin{figure}[t]
\centerline{\xymatrix@W=0mm@H=0mm@R=4.5mm@C=1.25mm@M=0mm{ 
\cdots\ar@{.}[r]
& \bullet_1\ar@{-}[drrr]\ar@{-}[r] & \bullet_1\ar@{-}[drr]\ar@{.}[r] &
\bullet_2\ar@{-}[dr]\ar@{-}[r] & \bullet_2\ar@{-}[d]\ar@{.}[r] &
\cdots\ar@{.}[r] & \ar@{-}[r] & \bullet_l\ar@{-}[dlll] &
\cdots\ar@{.}[l]\ar@{.}[r]
& \bullet_1\ar@{-}[drrr]\ar@{-}[r] & \bullet_1\ar@{-}[drr]\ar@{.}[r] &
\bullet_2\ar@{-}[dr]\ar@{-}[r] & \bullet_2\ar@{-}[d]\ar@{.}[r] &
\cdots\ar@{.}[r] & \bullet_l\ar@{-}[r] & \bullet_l\ar@{-}[dlll] &
\cdots\ar@{.}[l]
&& \ar@{.>}[rrrr]^{\pi_0} &&&& &&
\cdots\ar@{.}[r]
& \bullet_1\ar@{.}[drrr]\ar@{-}[r] & \bullet_1\ar@{.}[drr]\ar@{.}[r] &
\bullet_2\ar@{.}[dr]\ar@{-}[r] & \bullet_2\ar@{.}[d]\ar@{.}[r] &
\cdots\ar@{.}[r] & \bullet_l\ar@{-}[r] & \bullet_l\ar@{.}[dlll] &
\cdots\ar@{.}[l]
&& \ar@{.>}[rrrr]^{v} &&&& &&
\cdots\ar@{.}[r]
& \bullet_1\ar@{-}[drr]\ar@{.}[r] & \bullet_2\ar@{-}[dr]\ar@{.}[r] & \cdots\ar@{.}[rr] && \bullet_l\ar@{-}[dll] &
\cdots\ar@{.}[l]
\\
\cdots
&&&& \ar@{.}[llll]\bullet\ar@{.}[rrrr]\ar@{}[d]|(0.25){\displaystyle z_1}\ar@{-}[drrrr]!U &&&&
\cdots
&&&& \ar@{.}[llll]\bullet\ar@{.}[rrrr]\ar@{}[d]|(0.25){\displaystyle z_1}\ar@{-}[dllll]!U &&&&
\cdots
&& \ar[rrrr]^{w} &&&& &&
\cdots\ar@{.}[r]
&&&& \ar@{.}[llll]\bullet\ar@{.}[rrrr]\ar@{-}[d]_(0.25){\displaystyle y_1} &&&&
\cdots
&& \ar[rrrr]^{v} &&&& &&
\cdots\ar@{.}[r]
&&& \ar@{.}[lll]\bullet\ar@{-}[d]_(0.25){\displaystyle x_1}\ar@{.}[rrr] &&&
\cdots
\\
&&&& &&&&
\vdots &&&& &&&&
&& &&&& &&
&&&& \vdots &&&&
&& &&&& &&
&&& \vdots &&&
}}
\caption{Une construction de factorisation $u = v\pi_0$ donnant un recouvrement $u = \hat{v}\,\hat{w}$
de d\'ecomposition tronqu\'ee $tr u = v w$.
Les expressions $\bullet_k-\bullet_k$
repr\'esentent les intervalles de sommets fusionn\'es sur les diff\'erents \'el\'ements $\bullet_k = x_0\in\sset_0$
par l'application $u: \tset_0\rightarrow\sset_0$.}\label{Fig:CompositeTopMerging}\end{figure}
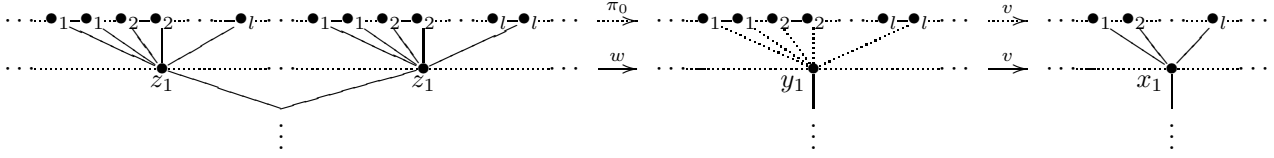

L'application $w\tau_1$ envoie les fibres $\tau_1^{-1}(z_1)\subset\tset_0$
des sommets $z_1\in w^{-1}(y_1)$
sur $y_1$, pour chaque $y_1\in\dset_1$.
La factorisation $w\tau_1 = \theta_1\pi_0$
revient \`a la donn\'ee d'une concat\'enation d'ensembles ordonn\'es $\dset_0 = \coprod_{y_1\in\dset_1}\dset_0(y_1)$,
qui repr\'esentent les fibres $\dset_0(y_1) = \theta_1^{-1}(y_1)$ de l'application $\theta_1$,
et d'applications surjectives
\begin{equation}\renewcommand{\theequation}{*}
\pi_0: \coprod_{z_1\in w^{-1}(y_1)}\tau_1^{-1}(z_1)\rightarrow\dset_0(y_1)
\end{equation}
qui repr\'esentent la restriction de $\pi_0$ \`a chaque groupe de composantes associ\'e \`a un \'el\'ement $y_1\in\dset_1$.
Ces applications doivent aussi pr\'eserver l'ordre sur chaque fibre $\tau_1^{-1}(z_1)$
par d\'efinition de la cat\'egorie $\Omega_n^{epi}$.
Dans la Figure~\ref{Fig:CompositeTopMerging},
on a repr\'esent\'e la fibre $\dset_0(y_1) = \theta_1^{-1}(y_1)$ d'un seul point $y_1\in\dset_1$
et les fibres $\tau_1^{-1}(z_1)$
des sommets $z_1\in\tset_1$ qui sont envoy\'es sur ce point $y_1$.

L'existence d'une factorisation $u = v\pi_0$ impose la relation suppl\'ementaire
$u(k)\leq u(l)\,\Rightarrow\,\pi_0(k)\leq\pi_0(l)$
pour chaque paire $\{k,l\}$
sur un m\^eme groupe de composantes $\coprod_{z_1\in w^{-1}(y_1)}\tau_1^{-1}(z_1)$
car les applications $v$ doivent \'egalement \^etre croissantes sur les fibres.
On a aussi $u(k)\not=u(l)\,\Rightarrow\,\pi_0(k)\not=\pi_0(l)$.
On note que $u^{-1}(x_0)\cap\tau_1^{-1}(z_1)$
d\'efinit un sous-intervalle de $\tau_1^{-1}(z_1)\subset\tset_0$, pour tout $x_0\in\sset_0$,
puisque l'application $u$ est aussi suppos\'ee croissante sur chaque fibre $\tau_1^{-1}(z_1)$, $z_1\in\tset_1$.
Dans la Figure~\ref{Fig:CompositeTopMerging},
on a repr\'esent\'e l'image inverse des \'el\'ements $\bullet_k = x_0\in\sigma_1^{-1}(x_1)$
dans la fibre du sommet $y_1$ et des sommets $z_1$
par des intervalles $\bullet_k - \bullet_k$
qui, d'apr\`es les observations de ce paragraphe, se doivent d'\^etre agenc\'es selon l'ordre
de l'ensemble image $\{\bullet_1,\dots,\bullet_l\} = \sigma_1^{-1}(x_1)\subset\sset_0$.
L'application $\pi_0$
est donc constitu\'ee d'un assemblage de surjections
\begin{equation*}
\bullet_k-\bullet_k\amalg\cdots\amalg\bullet_k-\bullet_k\xrightarrow{\pi_0}\bullet_k-\bullet_k
\end{equation*}
pr\'eservant l'ordre sur chaque intervalle $\bullet_k - \bullet_k$
du domaine.

In fine,
on conclut de notre analyse que l'application $\pi_0: \tset_0\rightarrow\dset_0$
s'ins\'erant dans une factorisation $u = \hat{v}\,\hat{w}$
est enti\`erement d\'etermin\'ee par la donn\'ee d'ensembles ordonn\'es $\dset_0(x_0,y_1)$, $x_0\in\sset_0$, $y_1\in\tset_1$,
qui repr\'esenteront les images inverses $v^{-1}(x_0)\cap\theta_1^{-1}(y_1)$ dans les fibres des sommets $y_1\in\tset_1$,
et d'applications surjectives
\begin{equation}\renewcommand{\theequation}{**}
\pi_0: \coprod_{z_1\in w^{-1}(y_1)} u^{-1}(x_0)\cap\tau_1^{-1}(z_1)\rightarrow\dset_0(x_0,y_1)
\end{equation}
pr\'eservant l'ordre sur chaque composante $u^{-1}(x_0)\cap\tau_1^{-1}(z_1)$.
L'ensemble ordonn\'e $\dset_0$ est alors d\'efini par la concat\'enation
$\dset_0 = \coprod_{y_1 = 1}^{d_1}\bigl\{\coprod_{x_0 = 1}^{s_0}\dset_0(x_0,y_1)\bigr\}$
dans l'ordre indiqu\'e par la somme.
L'application $\pi_0: \tset_0\rightarrow\dset_0$
est form\'ee de la somme des surjections
que l'on s'est donn\'ees~(**),
avec une permutation de battage provenant de l'ordonnancement de la somme dans la d\'efinition de $\dset_0$.
L'application $v: \dset_0\rightarrow\sset_0$
est d\'etermin\'ee par l'identit\'e $v^{-1}(x_0) = \coprod_{y_1\in\dset_1}\dset_0(x_0,y_1)$, pour chaque~$x_0\in\sset_0$,
et l'application $\theta_1: \dset_0\rightarrow\dset_1$
par l'identit\'e $\theta_1^{-1}(y_1) = \coprod_{x_0\in\sset_0}\dset_0(x_0,y_1)$, pour chaque~$y_1\in\dset_1$.
Dans la Figure~\ref{Fig:CompositeTopMerging},
le domaine $\dset_0(x_0,y_1)$ associ\'e au point $\bullet_k = x_0$
est repr\'esent\'e par l'intervalle $\bullet_k - \bullet_k$
au dessus de $y_1$,
les intervalles $\bullet_k - \bullet_k$ au dessus des diff\'erents $z_1$
repr\'esentant les images inverses $u^{-1}(x_0)\cap\tau_1^{-1}(z_1)$
dans les fibres des diff\'erents sommets $z_1\in\tset_1$.

\medskip
On a suppos\'e au d\'epart que le domaine $\tset_0 = \In\tauobj$
est en bijection avec un ensemble d'entr\'ees donn\'e $\eset$.
On va identifier chaque $e\in\eset$ au point correspondant de $\tset_0$
pour simplifier l'\'ecriture des relations qui vont suivre.
On forme, pour chaque couple $(x_0,y_1)\in\sset_0\times\dset_1$,
le produit tensoriel d'alg\`ebres associatives libres
\begin{equation*}
A_{x_0 y_1} = \bigotimes_{z_1\in w^{-1}(y_1)}\kk\langle X_e,e\in u^{-1}(x_0)\cap\tau_1^{-1}(z_1)\rangle
\end{equation*}
sur des variables $X_e$ de degr\'e $\deg(e)$, pour chaque $e\in\eset$.
On consid\`ere la construction bar $B(A_{x_0 y_1})$
de chacune de ces alg\`ebres gradu\'ees.
\begin{figure}[t]
\centerline{\xymatrix@W=0mm@H=0mm@R=6mm@C=5mm@M=0mm{ 
\ar@{-}[dr]\ar@{.}[rr]^<{e_1}^>{e_2} && \ar@{-}[dl]\ar@{.}[r] &
\ar@{-}[dr]\ar@{.}[rr]^<{e_3}^>{f_1} && \ar@{-}[dl]\ar@{.}[r] &
\ar@{-}[dr]\ar@{.}[r]^<{e_4}^>{e_5} & \ar@{-}[d]\ar@{.}[r]^>{f_2} & \ar@{-}[dl]
&& \ar@{.>}[r]^{\pi_0} &&&
\ar@{.}[dr]\ar@{.}[r]^<{e_1 e_3}^>{e_2} & \ar@{.}[d]\ar@{.}[r]^>{f_1} & \ar@{.}[dl]\ar@{.}[rr] &&
\ar@{.}[dr]\ar@{.}[r]^<{e_4}^>{e_5} & \ar@{.}[d]\ar@{.}[r]^>{f_2} & \ar@{.}[dl]
&& \ar@{.>}[r]^{v} &&&
\ar@{-}[drr]\ar@{}[d]|{x_0^1}\ar@{.}[rrrr]^<{e_1\dots e_5}^>{f_1 f_2} &&&& \ar@{-}[dll]\ar@{}[d]|{x_0^2} \\
\ar@{.}[r] & \ar@{-}[drrr]\ar@{.}[rrr] &&& \ar@{-}[d]\ar@{.}[rrr] &&& \ar@{-}[dlll]\ar@{.}[r] &
&& \ar[r]^{w} &&&
\ar@{.}[r] & \ar@{-}[drr]\ar@{}[d]|{y_1^1}\ar@{.}[rrrr] &&&& \ar@{-}[dll]\ar@{}[d]|{y_1^2}\ar@{.}[r] &
&& \ar[r]^{v} &&&
\ar@{.}[rr] && \ar@{-}[d]\ar@{.}[rr] && \\
\ar@{.}[rrrrrrrr] &&& &&& &&
&& \ar[r] &&&
\ar@{.}[rrrrrr] &&& &&&
&& \ar[r] &&&
\ar@{.}[rrrr] &&&& }}
\caption{Un exemple de d\'ecomposition $u = v w$.}\label{Fig:CompositeExample}\end{figure}
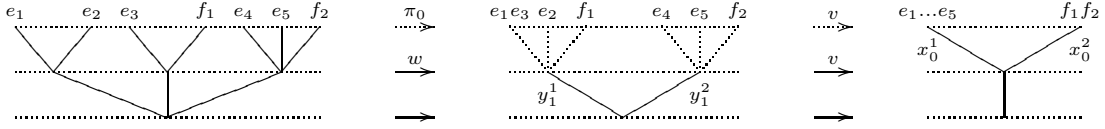
Pour la d\'ecomposition tronqu\'ee $\tr u = v w$ de la Figure~\ref{Fig:CompositeExample},
on obtient ainsi les alg\`ebres
\begin{multline*}
A_{x_0^1 y_1^1} = \kk\langle X_{e_1},X_{e_2}\rangle\otimes\kk\langle X_{e_3}\rangle,
\quad A_{x_0^2 y_1^1} = \kk\langle X_{f_1}\rangle,\\
\text{et}\quad A_{x_0^1 y_1^2} = \kk\langle X_{e_4},X_{e_5}\rangle,
\quad A_{x_0^2 y_1^2} = \kk\langle X_{f_2}\rangle.
\end{multline*}
On n'utilisera pas le produit tensoriel, que l'on r\'eserve pour la notation du complexe bar $B(A_{x_0 y_1})$,
mais un simple $\cdot$
dans l'expression des \'el\'ements de $A_{x_0 y_1}$.
On peut en fait identifier $A_{x_0 y_1}$
\`a l'alg\`ebre engendr\'ee par les variables $X_e$, $e\in u^{-1}(x_0)\cap\tau_1^{-1}(z_1)$, $z_1\in w^{-1}(y_1)$,
avec les relations de commutations $X_{e_1}\cdot X_{e_2} = X_{e_2}\cdot X_{e_1}$,
lorsque les variables $X_{e_1}$ et $X_{e_2}$
appartiennent \`a des groupements $u^{-1}(x_0)\cap\tau_1^{-1}(z_1)$
diff\'erents.

Le complexe $B(A_{x_0 y_1})$ poss\`ede un facteur direct $B(A_{x_0 y_1})^{sh}$
qui est engendr\'e par les tenseurs $\alpha_1\otimes\dots\otimes\alpha_d\in A_{x_0 y_1}^{\otimes d}$
tels que :
\begin{itemize}
\item
le mon\^ome $\alpha = \alpha_1\cdot\ldots\cdot\alpha_d$
obtenu par concat\'enation des facteurs $\alpha_i$
est de degr\'e $1$ en chaque variable $X_e$, pour tout $e\in u^{-1}(x_0)\cap\tau_1^{-1}(z_1)$, $z_1\in w^{-1}(y_1)$,
\item
les variables $X_e$ associ\'ees aux indices $e\in u^{-1}(x_0)\cap\tau_1^{-1}(z_1)$, pour un \'el\'ement $z_1\in w^{-1}(y_1)$ fix\'e,
apparaissent dans l'ordre des indices~$u^{-1}(x_0)\cap\tau_1^{-1}(z_1)\subset\tset_0$
dans le mon\^ome $\alpha$.
\end{itemize}
Dans notre exemple,
on obtient pour $B(A_{x_0^1 y_1^1})^{sh}$
un complexe de la forme~:
\begin{multline*}
\underbrace{\kk X_{e_1}\otimes X_{e_2}\otimes X_{e_3}\oplus\kk X_{e_1}\otimes X_{e_3}\otimes X_{e_2}\oplus\kk X_{e_3}\otimes X_{e_1}\otimes X_{e_2}}_{\deg=3}\\
\xrightarrow{\partial}
\quad\begin{aligned} & \kk X_{e_1} X_{e_2}\otimes X_{e_3}\oplus\kk X_{e_1} X_{e_3}\otimes X_{e_2}\oplus\kk X_{e_3} X_{e_1}\otimes X_{e_2} \\
\oplus &
\underbrace{\kk X_{e_1}\otimes X_{e_2} X_{e_3}\oplus\kk X_{e_1}\otimes X_{e_3} X_{e_2}\oplus\kk X_{e_3}\otimes X_{e_1} X_{e_2}}_{\deg=2} \\
\end{aligned}\\
\xrightarrow{\partial}
\quad\underbrace{\kk X_{e_1} X_{e_2} X_{e_3}\oplus\kk X_{e_1} X_{e_3} X_{e_2}\oplus\kk X_{e_3} X_{e_1} X_{e_2}}_{\deg=1}.
\end{multline*}

\medskip
On observe~:

\begin{obsv}
La donn\'ee de la surjection~(**) au~\S\ref{KoszulAcyclicity:Truncation}, pour un couple $(x_0,y_1)$ fix\'e,
revient \`a la donn\'ee d'un tenseur $\pm\otimes_{k\in\dset_0(x_0,y_1)}\alpha_k$
de $B(A_{x_0 y_1})^{sh}$,
les mon\^omes $\alpha_k\in A_{x_0 y_1}$, $k\in\dset_0(x_0,y_1)$,
associ\'es \`a une surjection $\pi_0$ \'etant d\'etermin\'es par les indices des images inverses $\pi_0^{-1}(k)\subset\eset$.

Cette correspondance fait implicitement appel \`a une permutation d'\'el\'ements gra\-du\-\'es,
associ\'es aux entr\'ees $e\in\eset$.
Le signe qui appara\^\i t dans l'expression de notre tenseur est produit par cette permutation.
\end{obsv}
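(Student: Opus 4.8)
The plan is to realize the asserted correspondence as an explicit bijection, set the signs aside at first and reinstate them at the end by the Koszul rule. First I would define the forward map: given a surjection $\pi_0$ of the form~(**) of~\S\ref{KoszulAcyclicity:Truncation}, attached to a fixed pair $(x_0,y_1)$, I associate to each $k\in\dset_0(x_0,y_1)$ the monomial $\alpha_k$ given by the product of the variables $X_e$ indexed by the elements $e\in\pi_0^{-1}(k)$. This product is a well-defined element of $A_{x_0 y_1}$: the elements of $\pi_0^{-1}(k)$ that lie in a single fiber $u^{-1}(x_0)\cap\tau_1^{-1}(z_1)$ form an interval, since $\pi_0$ preserves the order of that fiber, so their product in the free factor $\kk\langle X_e,e\in u^{-1}(x_0)\cap\tau_1^{-1}(z_1)\rangle$ is unambiguous, whereas variables attached to distinct $z_1\in w^{-1}(y_1)$ graded-commute in $A_{x_0 y_1}$. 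Reading the tensor $\otimes_{k}\alpha_k$ in the order of the concatenated set $\dset_0(x_0,y_1)$ then yields the candidate element of the bar complex.

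Next I would check that this tensor lies in the shuffle summand $B(A_{x_0 y_1})^{sh}$. The degree-one condition in each variable is immediate, because $\pi_0$ is a map and therefore sends every $e$ to a single $k$; hence each $X_e$ occurs exactly once in the concatenated word $\alpha=\alpha_1\cdot\ldots\cdot\alpha_d$. The ordering condition --- that for a fixed $z_1$ the variables $X_e$, $e\in u^{-1}(x_0)\cap\tau_1^{-1}(z_1)$, occur in $\alpha$ following the order of the fiber --- follows from the order-preservation of $\pi_0$ on that fiber together with the fact that the factors $\alpha_k$ are concatenated in the order of $\dset_0(x_0,y_1)$. Each $\alpha_k$ is a nonempty word, so the tensor indeed belongs to the reduced bar complex.

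Then I would produce the inverse. Starting from a tensor $\alpha_1\otimes\dots\otimes\alpha_d$ in $B(A_{x_0 y_1})^{sh}$, I recover the fibers by setting $\pi_0^{-1}(k)$ to be the set of indices $e$ such that $X_e$ appears in $\alpha_k$. The degree-one condition guarantees that this defines a map $\pi_0$ on $\coprod_{z_1\in w^{-1}(y_1)}u^{-1}(x_0)\cap\tau_1^{-1}(z_1)$, the non-degeneracy of the bar factors guarantees its surjectivity, and the ordering condition guarantees that $\pi_0$ preserves the order on each fiber $u^{-1}(x_0)\cap\tau_1^{-1}(z_1)$. These two assignments are visibly mutually inverse, which gives the bijection.

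Finally, for the sign: to write $\otimes_{k}\alpha_k$ one rearranges the graded symbols $X_e$ from their reference order --- the order induced on $\eset$ by the source tree, as in~\S\ref{KoszulConstruction:Signs} --- into the order in which they appear in the tensor, namely grouped by $k\in\dset_0(x_0,y_1)$ and, within each $\alpha_k$, by the free factors indexed by $z_1$. This rearrangement of graded elements produces exactly the Koszul sign $\pm$ recorded in the statement. I expect the sign bookkeeping to be the only delicate point: one must confirm that the graded commutation across distinct $z_1$-factors, combined with the reordering into the $\dset_0(x_0,y_1)$-blocks, is governed by the same $\sgn$ convention as in~\S\ref{KoszulConstruction:Signs}, so that the correspondence will later be compatible with the differential $d^0$. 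The combinatorial bijection itself is otherwise a direct unwinding of the factorization analysis carried out in~\S\ref{KoszulAcyclicity:Truncation}.
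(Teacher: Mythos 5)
Your proposal is correct and follows exactly the route the paper intends: the paper states this as an \emph{observation} with no formal proof (only the worked example following Figure~\ref{Fig:CompositeExample}), and your explicit two-way bijection --- $\alpha_k$ as the product of the $X_e$, $e\in\pi_0^{-1}(k)$, with degree-one, non-degeneracy and ordering conditions matching surjectivity and order-preservation of $\pi_0$, and the sign given by the Koszul permutation from the reference order of~\S\ref{KoszulConstruction:Signs} --- is precisely the definitional unwinding the paper takes for granted. Nothing is missing; your closing caveat about compatibility with $d^0$ is not part of this observation but of the next one (Observation~\ref{KoszulAcyclicity:TensorInterpretation}).
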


Par exemple,
pour la surjection $\pi_0$ de la Figure~\ref{Fig:CompositeExample},
on obtient les tenseurs
\begin{multline*}
X_{e_1} X_{e_3}\otimes X_{e_2}\in B(A_{x_0^1 y_1^1})^{sh},
\quad X_{f_1}\in B(A_{x_0^2 y_1^1})^{sh},\\
\text{et}\quad X_{e_4}\otimes X_{e_5}\in B(A_{x_0^1 y_1^2})^{sh},
\quad X_{f_2}\in B(A_{x_0^2 y_1^2})^{sh}.
\end{multline*}

L'article~\cite{LivernetRichter}
utilise une repr\'esentation diff\'erente,
en termes de posets,
des surjections (**) du~\S\ref{KoszulAcyclicity:Truncation}.
L'interpr\'etation en termes de tenseurs dans le complexe bar nous permet de g\'erer les signes qui interviennent dans les diff\'erentielles
de fa\c con automatique.

\medskip
On a maintenant~:

\begin{obsv}\label{KoszulAcyclicity:TensorInterpretation}
La diff\'erentielle $d^0(\{\hat{v}\}\otimes\{\hat{w}\}) = \sum_{\hat{v} = a b_0} \sgn(b_0)\cdot\{a\}\otimes\{b_0\hat{w}\}$
du complexe $(L_n(\tauobj,\sigmaobj)_{\tr u = v w},d^0)$
s'identifie \`a la diff\'erentielle du produit tensoriel des complexes $B(A_{x_0 y_1})^{sh}$,
de sorte que l'on a une identit\'e~:
\begin{equation*}
(L_n(\tauobj,\sigmaobj)_{\tr u = v w},d^0) = \bigotimes_{(x_0,y_1)\in\sset_0\times\dset_1} B(A_{x_0 y_1})^{sh}.
\end{equation*}
\end{obsv}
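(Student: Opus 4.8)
Le plan est de d\'eduire l'identit\'e annonc\'ee de la correspondance d\'ej\`a mise en place au~\S\ref{KoszulAcyclicity:Truncation} et dans l'observation qui la pr\'ec\`ede, en v\'erifiant successivement que cette correspondance respecte la graduation, qu'elle transporte la diff\'erentielle $d^0$ sur celle du produit tensoriel des complexes bar, et enfin qu'elle est compatible avec les signes.

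D'abord, je rassemblerais la bijection au niveau des g\'en\'erateurs. Un tenseur de base $\{\hat{v}\}\otimes\{\hat{w}\}$ couvrant la d\'ecomposition tronqu\'ee $\tr u = v w$ est enti\`erement d\'etermin\'e par l'application $\pi_0: \tset_0\rightarrow\dset_0$, elle-m\^eme \'equivalente, d'apr\`es le~\S\ref{KoszulAcyclicity:Truncation}, \`a la donn\'ee des ensembles ordonn\'es $\dset_0(x_0,y_1)$ et des surjections croissantes~(**), une pour chaque couple $(x_0,y_1)\in\sset_0\times\dset_1$. L'observation pr\'ec\'edente identifie chacune de ces surjections \`a un tenseur de base de $B(A_{x_0 y_1})^{sh}$; en prenant le produit de ces tenseurs sur l'ensemble des couples $(x_0,y_1)$, et r\'eciproquement, on obtient une bijection entre la base de $L_n(\tauobj,\sigmaobj)_{\tr u = v w}$ et celle de $\bigotimes_{(x_0,y_1)} B(A_{x_0 y_1})^{sh}$. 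Cette bijection respecte la graduation~: le degr\'e bar dans $B(A_{x_0 y_1})^{sh}$ vaut le nombre de facteurs, c'est-\`a-dire $|\dset_0(x_0,y_1)|$, et la somme sur les couples $(x_0,y_1)$ redonne $|\dset_0| = d_0$, qui mesure pr\'ecis\'ement (\`a un d\'ecalage constant fix\'e par $v$ et $w$) le degr\'e homologique dans le complexe $E^0 L_n(\tauobj,\sigmaobj)_u$.

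Ensuite, je v\'erifierais que $d^0$ correspond \`a la diff\'erentielle bar du produit tensoriel. Les termes de $d^0$ proviennent des d\'ecompositions $\hat{v} = a b_0$ o\`u $b_0:\thetaobj\rightarrow\underline{\rho}$ fusionne deux sommets cons\'ecutifs de niveau $0$ sur une m\^eme fibre $\theta_1^{-1}(y_1)$. Pour qu'une telle d\'ecomposition factorise $\hat{v}$, les deux sommets doivent avoir la m\^eme image dans $\sigmaobj$, donc appartenir au m\^eme $\dset_0(x_0,y_1)$ et \^etre cons\'ecutifs dans cet ensemble ordonn\'e. Sous la bijection, une telle fusion revient \`a remplacer le couple de facteurs adjacents $\alpha_k\otimes\alpha_{k+1}$ par leur produit $\alpha_k\cdot\alpha_{k+1}$ dans l'alg\`ebre $A_{x_0 y_1}$, qui est pr\'ecis\'ement un terme de la diff\'erentielle de $B(A_{x_0 y_1})$~; le mon\^ome concat\'en\'e restant de degr\'e $1$ en chaque variable et respectant l'ordre sur chaque groupe $u^{-1}(x_0)\cap\tau_1^{-1}(z_1)$, ce terme demeure dans le facteur direct $B(A_{x_0 y_1})^{sh}$. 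La somme sur tous les couples $(x_0,y_1)$ et sur toutes les fusions internes reconstitue exactement la diff\'erentielle $\sum 1\otimes\cdots\otimes\partial\otimes\cdots\otimes 1$ du produit tensoriel de complexes.

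La difficult\'e principale r\'eside dans le contr\^ole des signes~: il faut s'assurer que le signe $\sgn(b_0)$ produit par la r\`egle de permutation des sommets du~\S\ref{KoszulConstruction:Signs} co\"\i ncide avec le signe du terme correspondant de la diff\'erentielle bar du produit tensoriel, c'est-\`a-dire le produit du signe interne du complexe bar et du signe de Koszul issu du r\'earrangement des facteurs gradu\'es. La correspondance a pr\'ecis\'ement \'et\'e con\c cue pour rendre cette v\'erification automatique~: l'ordre de parcours de l'arbre fix\'e au~\S\ref{KoszulConstruction:Signs} et l'ordre des facteurs du produit tensoriel \'etant tous deux induits par le m\^eme agencement de $\dset_0 = \coprod_{y_1}\coprod_{x_0}\dset_0(x_0,y_1)$, la permutation de battage servant \`a d\'efinir $\pi_0$ et les permutations d'\'el\'ements gradu\'es intervenant de part et d'autre sont les m\^emes. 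Il suffit donc de suivre la convention de signe \`a travers la bijection pour conclure l'identification des deux complexes.
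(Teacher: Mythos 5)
Votre d\'emonstration est correcte et suit exactement la voie que l'article sous-entend~: le texte \'enonce ce r\'esultat comme une simple observation, sans preuve s\'epar\'ee, en s'appuyant sur l'analyse du~\S\ref{KoszulAcyclicity:Truncation} (g\'en\'erateurs $\{\hat{v}\}\otimes\{\hat{w}\}$ $\leftrightarrow$ tenseurs des $B(A_{x_0 y_1})^{sh}$ via les surjections~(**)) et sur la remarque qui pr\'ec\`ede selon laquelle l'interpr\'etation tensorielle rend la gestion des signes automatique, ce qui est pr\'ecis\'ement ce que vous explicitez. Le point cl\'e -- l'identification des termes de $d^0$ (fusions de deux sommets cons\'ecutifs de niveau $0$ appartenant n\'ecessairement \`a un m\^eme bloc $\dset_0(x_0,y_1)$) avec les termes $\alpha_k\otimes\alpha_{k+1}\mapsto\alpha_k\cdot\alpha_{k+1}$ de la diff\'erentielle bar, qui restent dans le facteur direct $B(A_{x_0 y_1})^{sh}$ -- est v\'erifi\'e correctement dans votre r\'edaction.
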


On sait que la construction bar (r\'eduite) d'une alg\`ebre associative libre $A = \kk\langle Y_f,f\in\fset\rangle$
v\'erifie
\begin{equation*}
H_*(\overline{B}(A)) = \bigoplus_{f\in\fset} \kk\{Y_f\},
\end{equation*}
pour des classes $\{Y_f\}$ plac\'ees en degr\'e $\deg(f)+1$ (voir par exemple~\cite[\S 3.1]{Loday}).
On en d\'eduit, en appliquant la formule de K\"unneth (voir~\cite[\S X.7]{MacLane})
\begin{equation*}
H_*(B(A_{x_0 y_1})) = \bigotimes_{z_1\in w^{-1}(y_1)} H_*(B(\kk\langle X_e,e\in u^{-1}(x_0)\cap\tau_0^{-1}(z_1)\rangle))
\end{equation*}
que le complexe $B(A_{x_0 y_1})^{sh}$ est acyclique,
sauf lorsque $u^{-1}(x_0)\cap\tau_0^{-1}(z_1)$
est r\'eduit \`a un seul \'el\'ement pour tout $z_1$,
auquel cas on obtient $H_*(B(A_{x_0 y_1})) = \kk$.

Dans le prochain paragraphe, on introduit une nouvelle notion - la notion de morphisme injectif fibre \`a fibre au niveau~$0$ -
pour appliquer ce r\'esultat au complexe~$(L_n(\tauobj,\sigmaobj)_{\tr u = v w},d^0)$.

\subsubsection{Une construction de cycles au niveau $E^0$ de la suite spectrale}\label{KoszulAcyclicity:TopCycles}
On dit qu'un morphisme $u: \tauobj\rightarrow\sigmaobj$
est injectif fibre \`a fibre au niveau $0$
si l'application
\begin{equation*}
\tau_1^{-1}(z_1)\xrightarrow{u|_{\tau_1^{-1}(z_1)}}\sigma_1^{-1}(u(z_1))
\end{equation*}
induite par $u: \tset_0\rightarrow\sset_0$ est injective pour tout $z_1\in\tset_1$.
La Figure~\ref{Fig:FiberInjective} donne un exemple de tel morphisme.
Rappelons que l'\'etiquetage des feuilles permet, d'apr\`es les observations du~\S\ref{PrunedTrees:CommaPoset},
de d\'eterminer compl\`etement le morphisme $u$
par sa repr\'esentation graphique (en utilisant les \'etiquettes comme des variables muettes).
La condition d'injectivit\'e de ce paragraphe revient \`a assurer que $u^{-1}(x_0)\cap\tau_0^{-1}(z_1)$
est r\'eduit \`a un seul \'el\'ement pour tout $z_1\in\tset_1$.
On a donc, d'apr\`es les remarques suivant l'Observation~\ref{KoszulAcyclicity:TensorInterpretation}~:

\begin{lemm}
Pour toute d\'ecomposition tronqu\'ee $\tr u = v w$,
on a la relation $H_*(E^0 L_n(\tauobj,\sigmaobj)_{\tr u = v w},d^0) = \kk$ lorsque le morphisme $u$ est injectif fibre \`a fibre au niveau $0$
et  $H_*(E^0 L_n(\tauobj,\sigmaobj)_{\tr u = v w},d^0) = 0$ sinon.\qed
\end{lemm}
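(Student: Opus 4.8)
The plan is to read the homology of $(L_n(\tauobj,\sigmaobj)_{\tr u = v w},d^0)$ off the tensor factorisation of Observation~\ref{KoszulAcyclicity:TensorInterpretation} by means of the Künneth formula. First I would record the identity
\[
(L_n(\tauobj,\sigmaobj)_{\tr u = v w},d^0) = \bigotimes_{(x_0,y_1)\in\sset_0\times\dset_1} B(A_{x_0 y_1})^{sh}
\]
of that observation, each factor being a complex of free $\kk$-modules, and then apply Künneth to obtain
\[
H_*\bigl(L_n(\tauobj,\sigmaobj)_{\tr u = v w},d^0\bigr)\simeq\bigotimes_{(x_0,y_1)\in\sset_0\times\dset_1} H_*\bigl(B(A_{x_0 y_1})^{sh}\bigr).
\]
This reduces the whole question to the analysis of a single factor $H_*(B(A_{x_0 y_1})^{sh})$.

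Next I would feed in the computation established just before the statement. For $A_{x_0 y_1}=\bigotimes_{z_1\in w^{-1}(y_1)}\kk\langle X_e,e\in u^{-1}(x_0)\cap\tau_1^{-1}(z_1)\rangle$, the Künneth formula together with the identity $H_*(\overline{B}(\kk\langle Y_f,f\in\fset\rangle))=\bigoplus_{f\in\fset}\kk\{Y_f\}$ shows that the reduced bar homology of a free algebra is carried by weight one in each generator. Since the shuffle summand $B(A_{x_0 y_1})^{sh}$ is, by its very definition, multilinear — of weight one in every variable $X_e$ — it can contribute to homology only in the configurations where each set $u^{-1}(x_0)\cap\tau_1^{-1}(z_1)$, $z_1\in w^{-1}(y_1)$, is reduced to a single element; in that case $H_*(B(A_{x_0 y_1})^{sh})=\kk$, and as soon as one of these sets carries two or more elements the factor becomes acyclic.

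Finally I would reassemble the factors: the tensor product equals $\kk$ exactly when every factor is $\kk$, and it vanishes as soon as a single factor is acyclic. As $w$ is surjective, each $z_1\in\tset_1$ lies in some fibre $w^{-1}(y_1)$, so the requirement that $u^{-1}(x_0)\cap\tau_1^{-1}(z_1)$ be a single element for all $(x_0,y_1)$ amounts exactly to the injectivity of every restriction $u|_{\tau_1^{-1}(z_1)}\colon\tau_1^{-1}(z_1)\rightarrow\sigma_1^{-1}(u(z_1))$, that is, to $u$ being injective fibre-wise at level~$0$. This yields $H_*(E^0 L_n(\tauobj,\sigmaobj)_{\tr u = v w},d^0)=\kk$ in that case and $0$ otherwise. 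The only delicate point — rather than a genuine obstacle — is to match the multilinearity constraint defining $B(A_{x_0 y_1})^{sh}$ against the weight-one concentration of the free bar homology, which is precisely what singles out the fibre-wise injective configurations.
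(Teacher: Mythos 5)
Your proof is correct and is essentially the paper's own argument: the lemma is stated there with no separate proof precisely because, exactly as in your write-up, it is read off from the identification $(L_n(\tauobj,\sigmaobj)_{\tr u = v w},d^0)=\bigotimes_{(x_0,y_1)\in\sset_0\times\dset_1}B(A_{x_0 y_1})^{sh}$ of Observation~\ref{KoszulAcyclicity:TensorInterpretation}, the weight-one concentration of the bar homology of a free associative algebra combined with the K\"unneth formula, and the remark that fibre-wise injectivity at level~$0$ amounts to each set $u^{-1}(x_0)\cap\tau_1^{-1}(z_1)$ containing at most one element. No gap to report.
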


On applique la d\'efinition de l'isomorphisme de K\"unneth (voir~\cite[\S X.7]{MacLane} ou~\cite[\S 4.2]{Loday})
pour obtenir des repr\'e\-sen\-tants des classes
de
\begin{equation*}
E^1 L_n(\tauobj,\sigmaobj)_{u} = H_*(E^0 L_n(\tauobj,\sigmaobj)_u,d^0).
\end{equation*}
On consid\`ere simplement l'ensemble des d\'ecompositions $u = \hat{v}\,\hat{w}$ du~\S\ref{KoszulAcyclicity:Truncation}
telles que $\dset_1 = \tset_0$
et l'application $\pi_0$
est bijective,
pour chaque d\'ecomposition donn\'ee $\tr u = v w$
dans $\Omega_{n-1}^{epi}$.
La Figure~\ref{Fig:TruncationFillin} donne un l'ensemble de ces d\'ecompositions $u = \hat{v}\,\hat{w}$
pour une d\'ecomposition tronqu\'ee donn\'ee $\tr u = v w$
du morphisme de la Figure~\ref{Fig:FiberInjective}.
\begin{figure}[t]
\begin{equation*}
\vcenter{\xymatrix@W=0mm@H=0mm@R=4.5mm@C=4.5mm@M=0mm{ \ar@{-}[d]\ar@{.}[r]^<{\displaystyle e_1} &
\ar@{-}[dr]\ar@{.}[rr]^<{\displaystyle e_2} && \ar@{-}[dl]\ar@{.}[r]^<{\displaystyle f_1} &
\ar@{-}[dr]\ar@{.}[rr]^<{\displaystyle e_3} && \ar@{-}[dl]\ar@{.}[l]_<{\displaystyle f_2} \\
\ar@{-}[drr]\ar@{.}[rr] && \ar@{-}[d]\ar@{.}[rrr] &&& \ar@{-}[dlll]\ar@{.}[r] & \\
\ar@{.}[rrrrrr] &&&&&& }}
\quad\xrightarrow{\displaystyle u}
\quad\vcenter{\xymatrix@W=0mm@H=0mm@R=4.5mm@C=4.5mm@M=0mm{ \ar@{-}[drr]\ar@{.}[rr]^<{\displaystyle e_1\,e_2\,e_3} &&&&
\ar@{-}[dll]\ar@{.}[ll]_<{\displaystyle f_1\,f_2} \\
\ar@{.}[rr] && \ar@{-}[d] && \ar@{.}[ll] \\
\ar@{.}[rrrr] &&&& }}
\end{equation*}
\caption{}\label{Fig:FiberInjective}\end{figure}
\begin{figure}[t]
\begin{equation*}
\xymatrix@W=0mm@H=0mm@R=4.5mm@C=4.5mm@M=0mm{
&&&&&& 
&&
\save[].[rrrrrrdd]!C="g21" *+<24pt>[F-,]\frm{}\restore 
\ar@{-}[dr]\ar@{.}[r]^<{e_1} &
\ar@{-}[d]\ar@{.}[r]^<{e_2} & \ar@{-}[dl]\ar@{.}[rr]^<{f_1} &&
\ar@{-}[dr]\ar@{.}[r]^<{e_3} && \ar@{-}[dl]\ar@{.}[l]_<{f_2}
&&
&&&&&& 
\\
&&&&&& 
&&
& \ar@{-}[drr]\ar@{.}[rr]\ar@{.}[l] &&&& \ar@{-}[dll]\ar@{.}[ll]\ar@{.}[r] & 
&&
&&&&&& 
\\
&&&&&& 
&&
\ar@{.}[rrrrrr] &&&&&& 
&&
&&&&&& 
\\
&&&&&& 
&&
&&&&&& 
&&
&&&&&& 
\\
&&&&&& 
&&
&&&&&& 
&&
&&&&&& 
\\
\save[].[rrrrrrdd]!C="g1" *+<24pt>[F-,]\frm{}\restore 
\ar@{-}[d]\ar@{.}[r]^<{e_1} &
\ar@{-}[dr]\ar@{.}[rr]^<{e_2} && \ar@{-}[dl]\ar@{.}[r]^<{f_1} &
\ar@{-}[dr]\ar@{.}[rr]^<{e_3} && \ar@{-}[dl]\ar@{.}[l]_<{f_2}
&&
&&&&&& 
&&
\save[].[rrrrrrdd]!C="g3" *+<24pt>[F-,]\frm{}\restore 
& \ar@{-}[drr]\ar@{.}[rr]^<{e_1\,e_2\,e_3} &&&&
\ar@{-}[dll]\ar@{.}[ll]_<{f_1\,f_2} &
\\
\ar@{-}[drr]\ar@{.}[rr] && \ar@{-}[d]\ar@{.}[rrr] &&& \ar@{-}[dlll]\ar@{.}[r] & 
&&
&&&&&& 
&&
& \ar@{.}[rr] && \ar@{-}[d] && \ar@{.}[ll] & 
\\
\ar@{.}[rrrrrr] &&&&&& 
&&
&&&&&& 
&&
& \ar@{.}[rrrr] &&&& & 
\\
&&&&&& 
&&
&&&&&& 
&&
&&&&&& 
\\
&&&&&& 
&&
&&&&&& 
&&
&&&&&& 
\\
&&&&&& 
&&
\save[].[rrrrrrdd]!C="g22" *+<24pt>[F-,]\frm{}\restore 
\ar@{-}[dr]\ar@{.}[r]^<{e_2} &
\ar@{-}[d]\ar@{.}[r]^<{e_1} & \ar@{-}[dl]\ar@{.}[rr]^<{f_1} &&
\ar@{-}[dr]\ar@{.}[r]^<{e_3} && \ar@{-}[dl]\ar@{.}[l]_<{f_2}
&&
&&&&&& 
\\
&&&&&& 
&&
& \ar@{-}[drr]\ar@{.}[rr]\ar@{.}[l] &&&& \ar@{-}[dll]\ar@{.}[ll]\ar@{.}[r] & 
&&
&&&&&& 
\\
&&&&&& 
&&
\ar@{.}[rrrrrr] &&&&&& 
&&
&&&&&& 
\ar "g1"!UC+<16pt,16pt>;"g21"!CL-<16pt,16pt>
\ar "g1"!DC+<16pt,-16pt>;"g22"!CL-<16pt,-16pt>
\ar "g21"!CR+<16pt,-16pt>;"g3"!UC+<-16pt,16pt>
\ar "g22"!CR+<16pt,16pt>;"g3"!DC+<-16pt,-16pt>
}
\end{equation*}
\caption{}\label{Fig:TruncationFillin}\end{figure}

\medskip
On obtient alors~:

\begin{lemm}\label{KoszulAcyclicity:TopCycleDefinition}
Lorsque le morphisme $u$ est injectif fibre \`a fibre au niveau $0$,
la somme $\Z(\{v\}\otimes\{w\}) = \sum_{u = \hat{v}\,\hat{w}} \pm\{\hat{v}\}\otimes\{\hat{w}\}$
sur l'ensemble des d\'ecompositions couvrant $\tr u = v w$
telles que $\dset_1 = \tset_0$ et $\pi_0$ est bijective
d\'efinit un cycle dans $L_n(\tauobj,\sigmaobj)_u$.
L'application $\Z: \{v\}\otimes\{w\}\mapsto\Z(\{v\}\otimes\{w\})$
d\'efinit alors un isomorphisme de modules gradu\'es~:
\begin{equation*}
\bigoplus_{\tr u = v w}\kk\{v\}\otimes\{w\}\xrightarrow{\simeq} H_*(E^0 L_n(\tauobj,\sigmaobj)_u,d^0) = E^1 L_n(\tauobj,\sigmaobj)_u.\qed
\end{equation*}
\end{lemm}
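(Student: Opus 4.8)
Le plan consiste \`a identifier explicitement, via l'isomorphisme de K\"unneth, un repr\'esentant de cycle pour chacune des classes d'homologie d\'etermin\'ees par le lemme pr\'ec\'edent. On part du scindage
\[
E^0 L_n(\tauobj,\sigmaobj)_u = \bigoplus_{\tr u = v w} (L_n(\tauobj,\sigmaobj)_{\tr u = v w},d^0)
\]
donn\'e par l'Observation~\ref{KoszulAcyclicity:CoverSplitting}, et de l'identification
\[
(L_n(\tauobj,\sigmaobj)_{\tr u = v w},d^0) = \bigotimes_{(x_0,y_1)\in\sset_0\times\dset_1} B(A_{x_0 y_1})^{sh}
\]
de l'Observation~\ref{KoszulAcyclicity:TensorInterpretation}. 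Puisque $u$ est injectif fibre \`a fibre au niveau $0$, chaque groupement $u^{-1}(x_0)\cap\tau_1^{-1}(z_1)$ se r\'eduit \`a un unique \'el\'ement $e(z_1)$, de sorte que $A_{x_0 y_1} = \bigotimes_{z_1\in w^{-1}(y_1)}\kk\langle X_{e(z_1)}\rangle$ est un produit tensoriel d'alg\`ebres libres \`a un g\'en\'erateur. Il suffit alors de d\'eterminer un repr\'esentant du g\'en\'erateur de l'homologie (de dimension $1$) de chaque facteur $B(A_{x_0 y_1})^{sh}$, puis de recoller ces repr\'esentants par la formule de K\"unneth.

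On rappellera d'abord que la construction bar r\'eduite d'une alg\`ebre libre \`a un g\'en\'erateur $\kk\langle X_e\rangle$ a pour homologie $\kk\{X_e\}$, repr\'esent\'ee par le tenseur de longueur $1$ r\'eduit \`a $X_e$ (voir~\cite[\S 3.1]{Loday}). La formule de K\"unneth appliqu\'ee au produit tensoriel ci-dessus (voir~\cite[\S X.7]{MacLane} ou~\cite[\S 4.2]{Loday}) montre que le g\'en\'erateur de $H_*(B(A_{x_0 y_1})^{sh})$ est repr\'esent\'e par le battage des g\'en\'erateurs individuels $X_{e(z_1)}$, $z_1\in w^{-1}(y_1)$, c'est-\`a-dire par la somme $\sum_{\sigma}\pm X_{e(z_{\sigma(1)})}\otimes\dots\otimes X_{e(z_{\sigma(m)})}$ portant sur tous les ordonnancements des $m = \#w^{-1}(y_1)$ facteurs. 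D'apr\`es le dictionnaire du~\S\ref{KoszulAcyclicity:Truncation}, chaque terme de ce battage correspond exactement \`a une surjection~(**) bijective $\pi_0$, l'ordre des facteurs du tenseur codant l'ordre de l'ensemble $\dset_0(x_0,y_1)$.

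Il resterait \`a recoller sur l'ensemble des couples $(x_0,y_1)$. Le produit tensoriel ext\'erieur de l'isomorphisme de K\"unneth transforme la collection de ces battages en la somme $\sum\pm\{\hat{v}\}\otimes\{\hat{w}\}$ portant sur les d\'ecompositions $u = \hat{v}\,\hat{w}$ couvrant $\tr u = v w$ pour lesquelles $\pi_0$ est bijective (donc $\dset_0 = \tset_0$), soit exactement $\Z(\{v\}\otimes\{w\})$. Ceci montrera \`a la fois que $\Z(\{v\}\otimes\{w\})$ est un cycle pour $d^0$ et que $\Z$ r\'ealise, sur chaque facteur $(L_n(\tauobj,\sigmaobj)_{\tr u = v w},d^0)$, l'isomorphisme de K\"unneth vers l'homologie de dimension $1$ correspondante. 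En sommant sur les d\'ecompositions tronqu\'ees $\tr u = v w$, on obtiendra l'isomorphisme de modules gradu\'es $\bigoplus_{\tr u = v w}\kk\{v\}\otimes\{w\}\xrightarrow{\simeq} E^1 L_n(\tauobj,\sigmaobj)_u$ annonc\'e.

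Le point d\'elicat sera la gestion des signes~: il faudra v\'erifier que les signes issus des permutations d'\'el\'ements gradu\'es qui d\'efinissent $\sgn$ au~\S\ref{KoszulConstruction:Signs} co\"\i ncident, terme \`a terme, avec les signes de Koszul du battage fourni par l'isomorphisme de K\"unneth. L'interpr\'etation des facteurs du complexe bar en termes de tenseurs, adopt\'ee au~\S\ref{KoszulAcyclicity:Truncation}, est pr\'ecis\'ement destin\'ee \`a rendre cette co\"\i ncidence automatique~; la v\'erification se ram\`enera ainsi \`a confronter terme \`a terme la somme d\'efinissant $\Z(\{v\}\otimes\{w\})$ et l'image de la classe de K\"unneth par l'identification de l'Observation~\ref{KoszulAcyclicity:TensorInterpretation}.
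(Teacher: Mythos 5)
Votre d\'emonstration est correcte et suit essentiellement la d\'emarche du texte, qui \'enonce ce lemme comme cons\'equence directe de la discussion qui le pr\'ec\`ede~: scindage de l'Observation~\ref{KoszulAcyclicity:CoverSplitting}, identification tensorielle de l'Observation~\ref{KoszulAcyclicity:TensorInterpretation}, homologie de la construction bar d'une alg\`ebre libre \`a un g\'en\'erateur, puis application de l'isomorphisme de K\"unneth (r\'ealis\'e par le battage) pour produire les repr\'esentants explicites des classes de $E^1 L_n(\tauobj,\sigmaobj)_u$. La correspondance entre les termes du battage et les d\'ecompositions \`a $\pi_0$ bijective, ainsi que la gestion automatique des signes par l'interpr\'etation tensorielle du~\S\ref{KoszulAcyclicity:Truncation}, sont pr\'ecis\'ement les points sur lesquels l'article s'appuie.
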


Le signe $\pm$ dans la d\'efinition de $\Z(\{v\}\otimes\{w\})$
est d\'etermin\'e par la permutation de battage $\pi_0$
qui intervient dans la construction de $\hat{w}$.

\medskip
La donn\'ee d'un repr\'esentant explicite des classes des modules d'homologies $E^1 = H_*(E^0 L_n(\tauobj,\sigmaobj)_u,d^0)$
permet d'identifier la diff\'erentielle $d^1$
de la suite spectrale.
En fait, on obtient ais\'ement~:

\begin{lemm}\label{KoszulAcyclicity:Homology}
On a la relation
\begin{equation*}
d^1\Z(\{v\}\otimes\{w\}) = \sum_{\substack{v = a b\\ \deg b = 1}} \pm\Z(\{a\}\otimes\{b w\})
\end{equation*}
de sorte que l'application $\Z: \{v\}\otimes\{w\}\mapsto\Z(\{v\}\otimes\{w\})$
induit un isomorphisme de dg-modules
\begin{equation*}
L_{n-1}(\tr\sigmaobj,\tr\tauobj)_{\tr u}\xrightarrow{\simeq}(E^1 L_n(\tauobj,\sigmaobj)_u,d^1)
\end{equation*}
lorsque le morphisme $u$ est injectif fibre \`a fibre au niveau $0$.\qed
\end{lemm}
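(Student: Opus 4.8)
The plan is to deduce the isomorphism of dg-modules from the $d^1$-formula, so that the whole content lies in establishing that formula. First I would record that the graded module $\bigoplus_{\tr u = vw}\kk\{v\}\otimes\{w\}$ is exactly the component $L_{n-1}(\tr\sigmaobj,\tr\tauobj)_{\tr u}$ equipped with its Koszul differential $\partial(\{v\}\otimes\{w\}) = \sum_{v = ab,\ \deg b = 1}\pm\{a\}\otimes\{bw\}$, and that Lemme~\ref{KoszulAcyclicity:TopCycleDefinition} already realizes $\Z$ as an isomorphism of graded modules onto $E^1 L_n(\tauobj,\sigmaobj)_u$. Granting the displayed relation $d^1\Z(\{v\}\otimes\{w\}) = \sum_{v=ab,\ \deg b = 1}\pm\Z(\{a\}\otimes\{bw\})$, the map $\Z$ then intertwines the two differentials and is therefore an isomorphism of dg-modules; everything reduces to computing $d^1$ on the top cycles.

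To compute $d^1$ I would first isolate the part of $\partial$ that realizes it. A degree-$1$ factor appearing in the bar differential either merges two consecutive vertices at level $0$, leaving $d_1+\dots+d_{n-1}$ fixed (these are the terms of $d^0$, by~\S\ref{KoszulAcyclicity:SpectralSequence}), or merges vertices at a level $i$ with $1\leq i\leq n-1$, lowering the filtration degree by exactly one; the latter induce $d^1$. I would then apply $\partial$ to the representative $\Z(\{v\}\otimes\{w\}) = \sum_{u = \hat v\,\hat w}\pm\{\hat v\}\otimes\{\hat w\}$, discard the level-$0$ contributions (which cancel, since $\Z(\{v\}\otimes\{w\})$ is a $d^0$-cycle), and regroup the surviving terms by the truncated factorisation $v = (\tr\hat a)(\tr\hat b)$. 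Since truncation sends a level-$\geq 1$ merging $\hat b$ of $\hat v$ to a degree-$1$ factor $b = \tr\hat b$ of $v$, and every factorisation $v = ab$ with $\deg b = 1$ arises this way, the level-$\geq 1$ part of $\partial\Z(\{v\}\otimes\{w\})$ splits into packets indexed by the factorisations $v = ab$.

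It then remains to identify each packet with $\pm\Z(\{a\}\otimes\{bw\})$ in $E^1$. Here I would use that $u$ is injective fibre by fibre at level $0$, so that the subcomplex $L_n(\tauobj,\sigmaobj)_{\tr u = a(bw)}$ has $d^0$-homology equal to $\kk$ by the Lemme preceding~\ref{KoszulAcyclicity:TopCycleDefinition}; the packet, being a $d^0$-cycle in this summand, is automatically a scalar multiple of the generator $\Z(\{a\}\otimes\{bw\})$, and only the scalar is in question. The delicate point is that when $\hat b$ merges vertices at level $1$ the resulting middle object no longer carries the bijective level-$0$ expansion characterising the top cycles, so the packet equals $\pm\Z(\{a\}\otimes\{bw\})$ only up to a $d^0$-boundary; I would settle this through the identification of Observation~\ref{KoszulAcyclicity:TensorInterpretation} with a tensor product of reduced bar complexes $B(A_{x_0 y_1})^{sh}$, in which the Künneth generator is represented by any non-zero top class and the shuffle identities (as in the proof of Proposition~\ref{KoszulConstruction:KoszulEmbedding}) produce the coefficient mechanically.

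The main obstacle I anticipate is precisely this last sign bookkeeping: showing the scalar is $\pm 1$ and coincides with the sign occurring in the $\Omega_{n-1}^{epi}$-Koszul differential, while correctly absorbing the battage permutation $\pi_0$ hidden in the very definition of $\Z$. The device that makes this tractable is the bar-complex model of~\S\ref{KoszulAcyclicity:Truncation}, which encodes each permutation of graded generators as a bar differential and thereby generates the required signs from the rules of~\S\ref{KoszulConstruction:Signs} without any ad hoc computation.
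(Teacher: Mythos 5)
Your proposal is correct and follows essentially the route the paper intends: the paper gives no detailed proof of this lemma (it is stated with a terminal \qed, the preceding sentence merely observing that the explicit representatives $\Z(\{v\}\otimes\{w\})$ permit the identification of $d^1$), and your computation --- $d^1$ as the filtration-lowering part of $\partial$ applied to the representatives, regrouping of the surviving terms by the truncated factorisations $v = ab$, identification of each packet inside the summands of l'Observation~\ref{KoszulAcyclicity:CoverSplitting}, signs handled through the bar-complex model of l'Observation~\ref{KoszulAcyclicity:TensorInterpretation} --- is precisely the computation left implicit there.

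Two refinements. First, your stated reason for working only up to $d^0$-boundary is inaccurate: by \S\ref{KoszulConstruction:IndecomposableMorphisms}, a degree-$1$ morphism $\hat{b}$ merging vertices at a level $i\geq 1$ restricts to a shuffle \emph{bijection} at level $0$, so $\hat{b}\hat{w}$ again has bijective level-$0$ part and every term $\{\hat{a}\}\otimes\{\hat{b}\hat{w}\}$ of a packet is literally a term of $\Z(\{a\}\otimes\{b w\})$. The genuine subtlety lies in the converse direction --- that each term of $\Z(\{a\}\otimes\{b w\})$ is reached exactly once by factoring the coverings of $vw$ --- and your up-to-boundary argument is a legitimate way of sidestepping it, at the price of the scalar computation you describe. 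Second, for the asserted isomorphism with $L_{n-1}(\tr\sigmaobj,\tr\tauobj)_{\tr u}$ to hold as dg-modules, the entries $e' = \tau_0^{-1}(z_1)$ of $\tr\tauobj$ must carry the internal degree $\deg(e') = \#\tau_0^{-1}(z_1) + \sum_{k\in\tau_0^{-1}(z_1)} \deg(k)$; this convention, which the paper fixes immediately after the statement of the lemma, is exactly what makes the signs and degrees of the Koszul differential of $L_{n-1}$ agree with those your bar-complex bookkeeping produces, and it is also what legitimates the inductive reapplication of the lemma with graded entries. Your proposal should make this grading explicit rather than leave it inside the generic sign $\pm$.
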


Les entr\'ees de $\tr\tauobj = \{\tset_1\xrightarrow{\tau_2}\dots\xrightarrow{\tau_n}\tset_n\}$
sont en bijection avec l'ensemble des fibres $\eset' = \{\tau_0^{-1}(z_1),\ z_1\in\tset_1\}$
de la premi\`ere surjection de $\tauobj = \{\tset_0\xrightarrow{\tau_1}\dots\xrightarrow{\tau_n}\tset_n\}$.
Pour que les signes de $L_{n-1}(\tr\sigmaobj,\tr\tauobj)_{\tr u}$
correspondent aux signes de la diff\'erentielle $d^1$,
on associe \`a ces entr\'ees $e' = \tau_0^{-1}(z_1)$
le degr\'e
\begin{equation*}
\deg(e') = \#\tau_0^{-1}(z_1) + \sum_{k\in\tau_0^{-1}(z_1)} \deg(k).
\end{equation*}
Dans cette expression,
le nombre $\#\tau_0^{-1}(z_1)$, qui d\'esigne le cardinal de l'ensemble $\tau_0^{-1}(z_1)$,
correspond, dans la repr\'esentation de~\S\ref{KoszulConstruction:Signs},
au nombre de suspensions associ\'ees aux sommets de $\tau_0^{-1}(z_1)$.

\medskip
En r\'ecapitulant les r\'esultats obtenus, on obtient~:

\begin{lemm}
On a $E^1 L_n(\tauobj,\sigmaobj)_u = L_{n-1}(\tr\sigmaobj,\tr\tauobj)_{\tr u}$
si le morphisme $u$ est injectif fibre \`a fibre au niveau $0$
et $E^1 L_n(\tauobj,\sigmaobj)_u = 0$
sinon.\qed
\end{lemm}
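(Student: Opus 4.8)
Le plan est de synth\'etiser directement les r\'esultats \'etablis pr\'ec\'edemment dans cette section, en distinguant selon que le morphisme $u$ est ou non injectif fibre \`a fibre au niveau $0$. L'essentiel du travail --- le calcul d'homologie reposant sur l'identification tensorielle de l'Observation~\ref{KoszulAcyclicity:TensorInterpretation}, la formule de K\"unneth et l'acyclicit\'e des complexes $B(A_{x_0 y_1})^{sh}$, ainsi que la construction des repr\'esentants explicites $\Z(\{v\}\otimes\{w\})$ --- ayant d\'ej\`a \'et\'e men\'e dans les lemmes pr\'ec\'edents, la preuve se r\'eduit \`a assembler ces \'enonc\'es ; c'est pourquoi cet \'enonc\'e final se contente d'une r\'ecapitulation.

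Je traiterais d'abord le cas o\`u $u$ est injectif fibre \`a fibre au niveau $0$. Le Lemme~\ref{KoszulAcyclicity:TopCycleDefinition} identifie alors $E^1 L_n(\tauobj,\sigmaobj)_u = H_*(E^0 L_n(\tauobj,\sigmaobj)_u,d^0)$, comme module gradu\'e, \`a la somme $\bigoplus_{\tr u = v w}\kk\{v\}\otimes\{w\}$ au moyen de l'application $\Z$, c'est-\`a-dire au module gradu\'e sous-jacent \`a $L_{n-1}(\tr\sigmaobj,\tr\tauobj)_{\tr u}$. Le Lemme~\ref{KoszulAcyclicity:Homology} calcule ensuite la diff\'erentielle induite $d^1$ sur ces repr\'esentants et montre qu'elle s'identifie, signes compris, \`a la diff\'erentielle de $L_{n-1}(\tr\sigmaobj,\tr\tauobj)_{\tr u}$ --- le degr\'e $\deg(e') = \#\tau_0^{-1}(z_1) + \sum_{k\in\tau_0^{-1}(z_1)}\deg(k)$ attribu\'e aux entr\'ees de $\tr\tauobj$ \'etant pr\'ecis\'ement choisi pour cela. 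On obtient ainsi l'isomorphisme de dg-modules $L_{n-1}(\tr\sigmaobj,\tr\tauobj)_{\tr u}\xrightarrow{\simeq}(E^1 L_n(\tauobj,\sigmaobj)_u,d^1)$, soit l'identit\'e cherch\'ee.

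Dans le cas o\`u $u$ n'est pas injectif fibre \`a fibre au niveau $0$, j'utiliserais le scindage de l'Observation~\ref{KoszulAcyclicity:CoverSplitting}, qui exprime $E^0 L_n(\tauobj,\sigmaobj)_u$ comme la somme directe des sous-complexes $(L_n(\tauobj,\sigmaobj)_{\tr u = v w},d^0)$, index\'ee par les d\'ecompositions tronqu\'ees $\tr u = v w$. Le premier lemme de cette section, qui calcule $H_*(E^0 L_n(\tauobj,\sigmaobj)_{\tr u = v w},d^0)$, affirme que chacun de ces facteurs a une homologie triviale d\`es que $u$ ne v\'erifie pas la condition d'injectivit\'e fibre \`a fibre. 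Comme l'homologie commute aux sommes directes, on conclut imm\'ediatement que $E^1 L_n(\tauobj,\sigmaobj)_u = H_*(E^0 L_n(\tauobj,\sigmaobj)_u,d^0) = 0$.

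La difficult\'e principale ne r\'eside donc pas dans cet \'enonc\'e final, mais dans les lemmes qui le pr\'ec\`edent : le point d\'elicat \'etait de ramener le complexe $(L_n(\tauobj,\sigmaobj)_{\tr u = v w},d^0)$ \`a un produit tensoriel de constructions bar d'alg\`ebres associatives libres (Observation~\ref{KoszulAcyclicity:TensorInterpretation}) et de contr\^oler les signes lors du passage \`a la diff\'erentielle $d^1$. Une fois ces points acquis, la distinction des deux cas et l'assemblage des \'enonc\'es se font sans obstacle.
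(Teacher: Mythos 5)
Votre d\'emonstration est correcte et suit essentiellement la m\^eme d\'emarche que le papier, qui pr\'esente ce lemme comme une simple r\'ecapitulation des r\'esultats pr\'ec\'edents (d'o\`u l'absence de preuve s\'epar\'ee)~: le scindage de l'Observation~\ref{KoszulAcyclicity:CoverSplitting} joint \`a l'acyclicit\'e des facteurs $B(A_{x_0 y_1})^{sh}$ pour le cas non injectif, et les Lemmes~\ref{KoszulAcyclicity:TopCycleDefinition} et~\ref{KoszulAcyclicity:Homology} pour le cas injectif fibre \`a fibre au niveau $0$. Rien \`a signaler.
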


Une r\'ecurrence imm\'ediate entraine donc que l'homologie du complexe $L_n(\tauobj,\sigmaobj)_u$
est triviale,
sauf lorsque le morphisme $u$ est injectif fibre \`a fibre \`a tout niveau,
ce qui suppose $u = \id$
et $\tauobj=\sigmaobj$.
La conclusion du Lemme~\ref{KoszulAcyclicity:Result} s'ensuit et ceci boucle la d\'emonstration
des r\'esultats de~\S\S\ref{CoefficientConstructions}-\ref{DGCoefficientConstructions}.\qed

\section*{Partie 2. La r\'esolution de Koszul des la cat\'egorie des arbres \'elagu\'es}

On utilise le r\'esultat du Th\'eor\`eme~\ref{CoefficientConstructions:KoszulEquivalence}
pour d\'efinir un mod\`ele minimal de $\Omega_n^{epi}$
dans le cadre diff\'erentiel gradu\'e.
On r\'evise les applications de constructions de l'alg\`ebre diff\'erentielle gradu\'ee
aux cat\'egories au~\S\ref{DGConstructions}
avant de d\'efinir ce mod\`ele minimal au~\S\ref{PrunedTreeCobar}.

\subsection{Interm\`ede~: constructions sur les cat\'egories enrichies en dg-modules}\label{DGConstructions}

On consid\`ere pour nos constructions une cat\'egorie $\dg\Cat_{\X}$
form\'ee des petites cat\'egories enrichies en dg-modules (on parlera de dg-cat\'egories)
avec un ensemble d'objets fix\'e $\X$
(dans les applications de cet article, on prendra $\X = \Ob\Omega_n^{epi}$,
l'ensemble des arbres \'elagu\'es \`a niveaux).
Les morphismes de $\dg\Cat_{\X}$
sont les foncteurs de dg-cat\'egories
qui sont l'identit\'e sur l'ensemble des objets.

Le but de cette section est de passer en revue les applications des constructions bar et cobar de l'alg\`ebre diff\'erentielle gradu\'ee
aux cat\'egories de $\dg\Cat_{\X}$.
On passe rapidement sur les d\'emonstrations qui, pour la plupart, sont des g\'en\'eralisations
formelles des arguments d\'evelopp\'es dans le cadre des alg\`ebres.
On renvoie le lecteur \`a l'article~\cite{HusemollerMooreStasheff}
pour un expos\'e d\'etaill\'e de ces arguments.
On s'en rapportera \'egalement \`a la th\`ese \cite{Hasegawa} (voir plus particuli\`erement le chapitre~5 de cette th\`ese)
pour une pr\'esentation du cadre cat\'egorique permettant la g\'en\'eralisation
des constructions de~\cite{HusemollerMooreStasheff}.
Citons \'egalement l'article~\cite{Keller} pour un survol des applications des dg-cat\'egories
et une ample bibliographie sur le sujet.

On commence par expliciter la structure interne des cat\'egories et des foncteurs de~$\dg\Cat_{\X}$.

\subsubsection{La structure des dg-cat\'egories avec ensemble d'objets fix\'e}\label{DGConstructions:DGCategories}
La structure des dg-cat\'egories $\Theta\in\dg\Cat_{\X}$
est enti\`erement d\'etermin\'ee par la donn\'ee de dg-hom $\Theta(\bobj,\aobj)\in\dg\Mod$,
associ\'es aux couples $(\aobj,\bobj)\in\X\times\X$,
avec des morphismes de composition
\begin{equation*}
\Theta(\xobj,\aobj)\otimes\Theta(\bobj,\xobj)\xrightarrow{\mu}\Theta(\bobj,\aobj)
\end{equation*}
et des morphismes d'identit\'e
\begin{equation*}
\kk\xrightarrow{\eta}\Theta(\xobj,\xobj)
\end{equation*}
v\'erifiant une g\'en\'eralisation naturelle
des axiomes d'associativit\'e et d'unit\'e de la composition des morphismes dans les cat\'egories.
La donn\'ee des morphismes d'identit\'e \'equivaut \`a la donn\'ee d'\'el\'ements unit\'es $1_{\xobj}\in\Theta(\xobj,\xobj)$
tels que $\delta(1_{\xobj}) = 0$
pour tout $\xobj\in\X$.
On utilisera \'egalement les notations multiplicatives usuelles $\alpha\cdot\beta$
pour le produit de composition d'une dg-cat\'egorie.

Un morphisme $f: \Phi\rightarrow\Psi$
de la cat\'egorie $\dg\Cat_{\X}$
est d\'efini par la donn\'ee de morphismes de dg-modules
$f: \Phi(\bobj,\aobj)\rightarrow\Psi(\bobj,\aobj)$
pr\'eservant les morphismes d'identit\'e et les structures de composition
des cat\'egories.

On supposera pour des raisons techniques que l'ensemble $\X$
est munie d'une graduation $\deg: \X\rightarrow\NN$ \`a l'instar de l'ensemble des arbres \'elagu\'es.
On consid\'erera alors la sous-cat\'egorie pleine $\dg\Cat_{\X}^+\subset\dg\Cat_{\X}$
engendr\'ee par les cat\'egories $\Theta\in\dg\Cat_{\X}$
telles que $\Theta(\xobj,\xobj) = \kk 1_{\xobj}$, pour tout $\xobj\in\X$,
et $\Theta(\bobj,\aobj) = 0$
pour tout couple $\bobj\not=\aobj$
tel que $\deg(\bobj)-\deg(\aobj)\leq 0$.
On dira que $\dg\Cat_{\X}^+$
est la sous-cat\'egorie des dg-cat\'egories connexes de $\dg\Cat_{\X}$.

\subsubsection{Remarques~: une \'equivalence entre dg-cat\'egories et dg-alg\`ebres}\label{DGConstructions:DGAlgebraEquivalence}
Si on suppose que l'ensemble $\X$
est fini,
alors on a une \'equi\-va\-lence de cat\'egories entre les dg-cat\'egories $\Theta\in\dg\Cat_{\X}$
et les dg-alg\`ebres sur l'anneau commutatif $\kk_{\X} = \kk[1_{\xobj}, \xobj\in\X]$
engendr\'e comme $\kk$-module par des idempotents orthogonaux $1_{\xobj}$,~$\xobj\in\X$,
de sorte que~:
\begin{equation*}
1_{\bobj}\cdot 1_{\aobj} = \begin{cases} 1_{\aobj}, & \text{si $\aobj = \bobj$}, \\
0, & \text{sinon}. \end{cases}
\end{equation*}

On consid\'ere d'abord la cat\'egorie $\dg\Gr_{\X}$ des dg-graphes sur~$\X$
dont les objets sont les collections de dg-modules $\Gamma(\bobj,\aobj)$
index\'ees par les couples $(\bobj,\aobj)\in\X\times\X$.
On a, lorsque $\X$ est fini, une \'equivalence de cat\'egories de $\dg\Gr_{\X}$
dans la cat\'egorie des $\kk_{\X}$-bimodules
qui \`a un dg-graphe $\Gamma$ associe le dg-module
\begin{equation*}
\Gamma_{\X} = \bigoplus_{(\bobj,\aobj)\in\X\times\X} \Gamma(\bobj,\aobj),
\end{equation*}
muni de la structure de $\kk_{\X}$-bimodule telle que
\begin{equation*}
\Gamma(\bobj,\aobj) = 1_{\aobj}\cdot\Gamma_{\X}\cdot 1_{\bobj}.
\end{equation*}
Le produit tensoriel de $\kk_{\X}$-bimodules au dessus de $\kk_{\X}$
v\'erifie $N\otimes_{\kk_{\X}} M = \bigoplus_{\xobj\in\X} (N\cdot 1_{\xobj})\otimes(1_{\xobj}\cdot M)$
de sorte que pour les $\kk_{\X}$-bimodules associ\'es \`a des dg-graphes $\Gamma,\Delta\in\dg\Gr_{\X}$,
on a la relation
\begin{equation}
1_{\aobj}\cdot(\Gamma_{\X}\otimes_{\kk_{\X}}\Delta_{\X})\cdot 1_{\bobj}
= \bigoplus_{\xobj\in\X}\Gamma(\xobj,\aobj)\otimes\Delta(\bobj,\xobj).
\end{equation}

La dg-alg\`ebre associ\'ee \`a une cat\'egorie $\Theta$
est d\'efinie par le $\kk_{\X}$-bimodule $\Theta_{\X}$
associ\'e au dg-graphe sous-jacent \`a $\Theta$,
muni du morphisme unit\'e $\eta: \kk_{\X}\rightarrow\Theta_{\X}$ donn\'e par la somme des morphismes identit\'e de $\Theta$,
et du morphisme produit $\mu: \Theta_{\X}\otimes_{\kk_{\X}}\Theta_{\X}\rightarrow\Theta_{\X}$
induit composante par composante par les morphismes de composition de $\Theta$.

On utilisera cette correspondance de fa\c con heuristique pour transcrire des constructions
de l'alg\`ebre diff\'e\-ren\-tielle gradu\'ee
dans le cadre des cat\'egories, sans supposer n\'ecessairement $\X$ fini.
Le principe g\'en\'eral consiste simplement \`a remplacer le produit tensoriel $\otimes_{\kk_{\X}}$
par son d\'eveloppement~(*) pour former l'analogue cat\'egorique des constructions usuelles.
On utilisera dans la suite la notation $\otimes_{\X}$
pour d\'esigner le produit tensoriel~(*)
de dg-graphes sur~$\X$.

\subsubsection{Sur les dg-cat\'egories libres}\label{DGConstructions:FreeCategories}
Le foncteur d'oubli $U: \dg\Cat_{\X}\rightarrow\dg\Gr_{\X}$ qui applique une dg-cat\'egorie $\Theta$ sur son dg-graphe sous-jacent
poss\`ede ainsi un adjoint \`a gauche, le foncteur objet libre $\Free: \dg\Gr_{\X}\rightarrow\dg\Cat_{\X}$,
qui applique un dg-graphe $\Gamma$
sur la dg-cat\'egorie telle que~:
\begin{equation*}
\Free(\Gamma)(\bobj,\aobj)
= \bigoplus_{\substack{(\bobj,\xobj_{m-1},\dots,\xobj_{1},\aobj)\\
m\geq 0}}
\Gamma(\xobj_{1},\aobj)
\otimes\Gamma(\xobj_{2},\xobj_{1})
\otimes\dots
\otimes\Gamma(\bobj,\xobj_{m-1}).
\end{equation*}
La sommation s'\'etend sur l'ensemble des collections
\begin{equation*}
(\xobj_m,\dots,\xobj_0)\in\X\times\dots\times\X,\quad m\geq 0,
\end{equation*}
telles que $\xobj_m = \bobj$ et $\xobj_0 = \aobj$.
Lorsque $\bobj\not=\aobj$,
ces conditions entrainent $m>0$.
Lorsque $\bobj=\aobj$,
on peut avoir $m=0$
avec un produit tensoriel associ\'e d'ordre nul
retournant donc le dg-module unit\'e $\kk\in\dg\Mod$.
Le morphisme de composition de $\Free(\Gamma)$ est donn\'e par la concat\'enation des tenseurs.
Le morphisme d'identit\'e de $\Free(\Gamma)$ est donn\'e par l'inclusion des tenseurs d'ordre $m=0$
dans le d\'eveloppement des hom-objets $\Free(\Gamma)(\xobj,\xobj)$.

On a aussi un morphisme de dg-graphes $\eta: \Gamma\rightarrow\Free(\Gamma)$
qui identifie $\Gamma(\bobj,\aobj)$
au facteur de $\Free(\Gamma)(\bobj,\aobj)$
constitu\'e des tenseurs d'ordre $m=1$.
On peut ainsi identifier $\Gamma$ \`a un sous-objet de $\Free(\Gamma)$.

\subsubsection{Homomorphismes de torsion de dg-cat\'egories}\label{DGConstructions:TwistedDerivations}
On se donne maintenant une dg-cat\'egorie $\Theta\in\dg\Cat_{\X}$.
On dit qu'une collection d'homo\-mor\-phismes
$\partial: \Theta(\bobj,\aobj)\rightarrow\Theta(\bobj,\aobj)$
d\'efinit une d\'erivation de $\Theta$
si on a la relation
\begin{equation}
\partial(\alpha\cdot\beta) = (\partial\alpha)\cdot\beta + \pm\alpha\cdot(\partial\beta),
\end{equation}
pour tout couple d'\'el\'ements composables
$\alpha\in\Theta(\xobj,\aobj)$, $\beta\in\Theta(\bobj,\xobj)$,
le signe $\pm$
provenant de la commutation de l'homomorphisme $\partial$
avec l'\'el\'ement $\beta$.
On note que la relation~(*) entraine automatiquement que les \'el\'ements unit\'es $1_{\xobj}\in\Theta(\xobj,\xobj)$
sont annul\'es par~$\partial$.

On dit qu'une d\'erivation constitu\'ee d'homomorphismes de torsion
$\partial: \Theta(\bobj,\aobj)\rightarrow\Theta(\bobj,\aobj)$
est une d\'erivation de torsion de $\Theta$.
La relation de d\'erivation~(*) entraine que les morphismes de compositions et les morphismes d'identit\'es de $\Theta$
induisent des morphismes de dg-modules
sur les dg-modules tordus $(\Theta(\bobj,\aobj),\partial)$.
Par cons\'equent,
la collection de dg-modules tordus associ\'ee \`a une d\'erivation de torsion $(\Theta(\bobj,\aobj),\partial)$,
que l'on d\'esignera par la donn\'ee du couple $(\Theta,\partial)$,
h\'erite d'une structure de dg-cat\'egorie.

\subsubsection{Sur les dg-cat\'egories quasi-libres}\label{DGConstructions:QFreeCategories}
On a d\'efini la dg-cat\'egorie libre $\Free(\Gamma)$
associ\'ee \`a un dg-graphe $\Gamma$ au~\S\ref{DGConstructions:FreeCategories}.
Une dg-cat\'egorie quasi-libre est une dg-cat\'egorie tordue $(\Free(\Gamma),\partial)$
associ\'ee \`a une dg-cat\'egorie libre $\Free(\Gamma)$.

La dg-cat\'egorie libre $\Free(\Gamma)$
est,
d'apr\`es la d\'efinition du~\S\ref{DGConstructions:FreeCategories},
engendr\'ee comme dg-module par des tenseurs
\begin{equation}
\alpha_1\otimes\dots\otimes\alpha_m\in\Gamma(\xobj_1,\xobj_0)\otimes\cdots
\otimes\Gamma(\xobj_m,\xobj_{m-1}).
\end{equation}
Le dg-graphe $\Gamma$
s'identifie au facteur direct de $\Free(\Gamma)$
engendr\'e par les tenseurs d'ordre~$1$.
Le morphisme d'inclusion $\Gamma\subset\Free(\Gamma)$
s'identifie au morphisme universel de la dg-cat\'egorie libre.
Les tenseurs~(*) repr\'esentent en fait la composition des \'el\'ements $\alpha_i$
dans $\Free(\Gamma)$.
Les tenseurs~(*) d'ordre $m>1$ engendrent le dg-graphe $\Dec\Free(\Gamma)\subset\Free(\Gamma)$
des \'el\'ements d\'ecomposables de $\Free(\Gamma)$.

Une d\'erivation de torsion sur une dg-cat\'egorie libre $\partial: \Free(\Gamma)\rightarrow\Free(\Gamma)$
est donc d\'etermin\'ee par sa restriction au dg-graphe $\Gamma\subset\Free(\Gamma)$
puisque la relation de d\'erivation du~\S\ref{DGConstructions:TwistedDerivations}
entraine que l'on a l'identit\'e
\begin{equation*}
\partial(\alpha_1\otimes\cdots\otimes\alpha_m)
= \sum_{i=1}^{m}\pm\alpha_1\cdot\ldots\cdot\partial(\alpha_i)\cdot\ldots\cdot\alpha_m
\end{equation*}
pour une telle composition d'\'el\'ements.

Le morphisme de dg-cat\'egories $\phi_f: \Free(\Gamma)\rightarrow\Theta$
associ\'e \`a un morphisme de dg-graphes $f: \Gamma\rightarrow\Theta$
par la relation d'adjonction de l'objet libre
se d\'etermine par la formule
\begin{equation*}
\phi_f(\alpha_1\otimes\dots\otimes\alpha_m) = f(\alpha_1)\cdot\ldots\cdot f(\alpha_m)
\end{equation*}
pour un \'el\'ement compos\'e de $\Free(\Gamma)$.
Le morphisme $f: \Gamma\rightarrow\Theta$
repr\'esente en fait la restriction de $\phi_f$
au dg-graphe $\Gamma\subset\Free(\Gamma)$.

On \'etend la d\'efinition de $\phi_f$ aux homomorphismes $f$ de degr\'e $0$
pour construire les morphismes sur une dg-cat\'egorie quasi-libre.
On constate ais\'ement que l'homo\-mor\-phisme $\phi_f: (\Free(\Gamma),\partial)\rightarrow\Theta$
associ\'e \`a un homomorphisme de dg-graphes $f: \Gamma\rightarrow\Theta$ (de degr\'e $0$)
d\'efinit un morphisme de dg-cat\'egories si et seulement si on a la relation
\begin{equation}\renewcommand{\theequation}{**}
\delta f(\alpha) - f(\delta\alpha) = \phi_f(\partial(\alpha))
\end{equation}
pour tout g\'en\'erateur~$\alpha\in\Gamma$.

\medskip
On reprend maintenant la d\'efinition de la construction cobar de~\cite{HusemollerMooreStasheff}.
On utilisera l'extension naturelle aux dg-graphes de l'op\'eration de suspension des dg-modules $\Sigma C$
dont la d\'efinition est rappel\'ee au~\S\ref{Background:Suspension},
ainsi que l'op\'eration inverse de d\'esuspension $\Sigma^{-1}C$.

\subsubsection{La construction cobar appliqu\'ee aux dg-cocat\'egories}\label{DGConstructions:CobarCategory}
La construction cobar est un foncteur qui applique une dg-cocat\'egorie
sur une dg-cat\'egorie quasi-libre.
Les dg-cocat\'egories sont les structures qui lorsque $\X$ est fini
correspondent aux cog\`ebres augment\'ees sur $\kk_{\X}$
dans le dictionnaire du~\S\ref{DGConstructions:DGAlgebraEquivalence}.

Une dg-cocat\'egorie consiste en g\'en\'eral en la donn\'ee d'un dg-graphe $\Gamma$
muni de morphismes de diagonale
\begin{equation*}
\Gamma(\bobj,\aobj)
\xrightarrow{\Delta}\bigoplus_{\xobj\in\X}
\Gamma(\xobj,\aobj)\otimes\Gamma(\bobj,\xobj)
\end{equation*}
et de morphismes d'augmentation
\begin{equation*}
\Gamma(\xobj,\xobj)\xrightarrow{\epsilon}\kk
\end{equation*}
satisfaisant les duaux naturels des relations d'associativit\'e et d'unit\'e
des dg-cat\'egories.

On ne consid\'erera dans la suite que des dg-cocat\'egories connexes
au sens que $\Gamma(\xobj,\xobj) = \kk$, pour tout $\xobj\in\X$,
et $\Gamma(\bobj,\aobj) = 0$
pour tout couple $\bobj\not=\aobj$
tel que $\deg(\bobj)-\deg(\aobj)\leq 0$.
On forme alors le dg-graphe $\widetilde{\Gamma}$
tel que~:
\begin{equation*}
\widetilde{\Gamma}(\bobj,\aobj) = \begin{cases} 0, & \text{si $\bobj = \aobj$}, \\
\Gamma(\bobj,\aobj), & \text{sinon}.
\end{cases}
\end{equation*}

La diagonale de $\Gamma$
induit un morphisme de degr\'e $-1$
\begin{equation*}
\Sigma^{-1}\widetilde{\Gamma}(\bobj,\aobj)
\xrightarrow{\Delta}\bigoplus_{\xobj\in\X}
\Sigma^{-1}\widetilde{\Gamma}(\xobj,\aobj)
\otimes\Sigma^{-1}\widetilde{\Gamma}(\bobj,\xobj)
\subset\Free(\Sigma^{-1}\widetilde{\Gamma})(\bobj,\aobj)
\end{equation*}
qui d\'etermine une d\'erivation $\partial$
sur la dg-cat\'egorie libre $\Free(\Sigma^{-1}\widetilde{\Gamma})$.
On montre ais\'ement que cette d\'erivation est une d\'erivation de torsion (on a en fait $\delta(\partial) = 0$ et $\partial^2 = 0$).

La construction cobar de $\Gamma$
est la dg-cat\'egorie quasi-libre
\begin{equation*}
B^c(\Gamma) = (\Free(\Sigma^{-1}\widetilde{\Gamma}),\partial)
\end{equation*}
associ\'ee \`a la d\'erivation de torsion ainsi d\'efinie.

\medskip
La construction cobar d\'efinit clairement un foncteur sur la cat\'egorie des dg-coca\-t\'e\-go\-ries connexes.
De plus~:

\begin{prop}\label{DGConstructions:CobarEquivalence}
Le morphisme de dg-cat\'egories
\begin{equation*}
B^c(f): B^c(\Gamma)\rightarrow B^c(\Delta)
\end{equation*}
induit par une \'equivalence faible de dg-cocat\'egories connexes $f: \Gamma\xrightarrow{\sim}\Delta$
est une \'equivalence faible,
pourvu que les les dg-cocat\'egories $\Gamma$ et $\Delta$
soient cofibrantes comme dg-graphes.
\end{prop}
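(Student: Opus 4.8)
Le plan est de proc\'eder par un argument de suite spectrale associ\'ee \`a la filtration par la longueur tensorielle de la construction cobar, en adaptant la strat\'egie classique pour les alg\`ebres de~\cite{HusemollerMooreStasheff}. Comme dg-module gradu\'e sous-jacent, $B^c(\Gamma)(\bobj,\aobj) = \Free(\Sigma^{-1}\widetilde{\Gamma})(\bobj,\aobj)$ se d\'ecompose en la somme des produits tensoriels $\Sigma^{-1}\widetilde{\Gamma}(\xobj_1,\xobj_0)\otimes\dots\otimes\Sigma^{-1}\widetilde{\Gamma}(\xobj_m,\xobj_{m-1})$, index\'es par les suites $(\xobj_m,\dots,\xobj_0)$ telles que $\xobj_m = \bobj$ et $\xobj_0 = \aobj$. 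Sur cette d\'ecomposition, la diff\'erentielle interne $\delta$, h\'erit\'ee facteur par facteur de celle de $\Sigma^{-1}\widetilde{\Gamma}$, pr\'eserve la longueur tensorielle $m$, tandis que la d\'erivation de torsion $\partial$, induite par la diagonale de $\Gamma$ qui scinde un facteur en deux, augmente cette longueur de $1$. Je filtre donc chaque hom-dg-module $B^c(\Gamma)(\bobj,\aobj)$ par les sous-objets $F^p$ engendr\'es par les tenseurs de longueur $\geq p$~: la diff\'erentielle $\delta$ pr\'eserve chaque $F^p$ et $\partial$ envoie $F^p$ dans $F^{p+1}$.

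Le point qui assure la convergence est l'hypoth\`ese de connexit\'e des dg-cocat\'egories~: elle force $\widetilde{\Gamma}(\xobj_i,\xobj_{i-1}) = 0$ d\`es que $\deg(\xobj_i)-\deg(\xobj_{i-1})\leq 0$, de sorte que tout tenseur non nul de longueur $m$ contribuant \`a $B^c(\Gamma)(\bobj,\aobj)$ v\'erifie $m\leq\deg(\bobj)-\deg(\aobj)$. La filtration est donc finie sur chaque hom-dg-module. Le morphisme $B^c(f) = \Free(\Sigma^{-1}\widetilde{f})$ \'etant induit composante par composante par $\Sigma^{-1}\widetilde{f}$, il respecte la longueur tensorielle, pr\'eserve la filtration, et d\'efinit ainsi un morphisme entre les suites spectrales associ\'ees.

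Au rang $E^0$, la diff\'erentielle se r\'eduit \`a la seule diff\'erentielle interne $\delta$ et le morphisme induit par $B^c(f)$ est la somme des produits tensoriels des $\Sigma^{-1}\widetilde{f}$ facteur par facteur. C'est ici qu'intervient l'hypoth\`ese de cofibrance des dg-cocat\'egories comme dg-graphes~: chaque $\widetilde{\Gamma}(\xobj_i,\xobj_{i-1})$ est alors un dg-module cofibrant et $f$ induit une \'equivalence faible sur chacun de ces facteurs, de sorte que la formule de K\"unneth --- ou, de fa\c con \'equivalente, le fait que le produit tensoriel d'\'equivalences faibles entre dg-modules cofibrants reste une \'equivalence faible --- montre que chaque produit tensoriel est une \'equivalence faible. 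Le morphisme induit au rang $E^1$ est donc un isomorphisme, et puisque la filtration est finie, la conclusion suit par convergence de la suite spectrale.

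Je m'attends \`a ce que la difficult\'e principale r\'eside dans l'application propre de la formule de K\"unneth au rang $E^0$~: sans l'hypoth\`ese de cofibrance, le produit tensoriel d'\'equivalences faibles n'est en g\'en\'eral pas une \'equivalence faible, et c'est pr\'ecis\'ement cette subtilit\'e qui justifie la restriction aux dg-graphes cofibrants dans l'\'enonc\'e. Le reste de l'argument --- mise en place de la filtration, convergence, compatibilit\'e de $B^c(f)$ --- est formel.
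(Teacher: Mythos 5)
Your proof is correct and follows essentially the same route as the paper, whose entire proof is the remark that one applies the ``standard spectral argument'' to dg-categories, with connectedness ensuring convergence of the spectral sequences. The filtration by tensor length, the bound $m\leq\deg(\bobj)-\deg(\aobj)$ coming from connexity, and the use of dg-graph cofibrancy to make the K\"unneth step at $E^1$ valid over a general ground ring are exactly the details that this remark leaves implicit.
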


\begin{proof}
On applique l'argument spectral standard aux dg-cat\'egories.
On observera simplement que l'hypoth\`ese de connexit\'e assure la convergence
des suites spectrales utilis\'ees.
\end{proof}

\subsubsection{La construction bar appliqu\'ee aux dg-cat\'egories}\label{DGConstructions:BarCategory}
Soit $\Theta\in\dg\Cat_{\X}^+$ une dg-cat\'egorie connexe
au sens d\'efini au~\S\ref{DGConstructions:DGCategories}.
On forme le dg-graphe $\widetilde{\Theta}$
tel que
\begin{equation*}
\widetilde{\Theta}(\bobj,\aobj) = \begin{cases} 0, & \text{si $\bobj = \aobj$}, \\
\Theta(\bobj,\aobj), & \text{sinon}.
\end{cases}
\end{equation*}
La construction bar de $\Theta$
est un dg-graphe $B(\Theta)$
d\'efini par les produits tensoriels de dg-modules
\begin{equation*}
B(\Theta)(\bobj,\aobj)
= \bigoplus_{\substack{(\bobj,\xobj_{m-1},\dots,\xobj_{1},\aobj)\\
m\geq 0}}
\Sigma\widetilde{\Theta}(\xobj_{1},\aobj)
\otimes\Sigma\widetilde{\Theta}(\xobj_{2},\xobj_{1})
\otimes\dots
\otimes\Sigma\widetilde{\Theta}(\bobj,\xobj_{m-1}).
\end{equation*}
muni d'un certain homomorphisme de torsion $\partial$.
Lorsque $\bobj\not=\aobj$,
la somme s'\'etend sur les collections $(\xobj_0,\dots,\xobj_m)$
telles que $\xobj_i\not=\xobj_{i+1}$ et $\deg\xobj_{i+1} - \deg\xobj_{i}>0$, $\forall i$,
puisque les dg-modules $\widetilde{\Theta}(\xobj_{i+1},\xobj_{i})$ ne s'annulent pas
sous ces conditions seulement.
On a alors $\widetilde{\Theta}(\xobj_{i+1},\xobj_{i}) = \Theta(\xobj_{i+1},\xobj_{i})$.

On a par convention $B(\Theta)(\xobj,\xobj) = \kk$
lorsque $\bobj=\aobj=\xobj$.
On note aussi que l'on a~$B(\Theta)(\bobj,\aobj) = 0$
pour tout couple $\bobj\not=\aobj$
tel que $\deg(\bobj)-\deg(\aobj)\leq 0$.

La diff\'erentielle de $B(\Theta)$
est la somme de la diff\'erentielle naturelle des tenseurs, induite par la diff\'erentielle interne de $\Theta$,
et de l'homomorphisme de torsion
\begin{equation*}
\partial: B(\Theta)(\bobj,\aobj)\rightarrow B(\Theta)(\bobj,\aobj)
\end{equation*}
tel que
\begin{equation*}
\partial(\alpha_1\otimes\dots\otimes\alpha_m)
= \sum_{i=1}^{m-1} \pm\{\alpha_1\otimes\dots\otimes(\alpha_{i}\cdot\alpha_{i+1})\otimes\dots\otimes\alpha_m\}, 
\end{equation*}
pour un tenseur $\alpha_1\otimes\dots\otimes\alpha_m\in\Sigma\Theta(\xobj_1,\xobj_0)\otimes\cdots
\otimes\Sigma\Theta(\xobj_m,\xobj_{m-1})$.
Le signe $\pm$ est d\'etermin\'e par la commutation,
avec les facteurs $\alpha_k\in\Sigma\Theta(\xobj_{k},\xobj_{k-1})$, $k<i$,
du morphisme de composition de $\Theta$
qui, par suspension,
est \'equivalent \`a un homomorphisme de degr\'e $-1$~:
\begin{equation*}
\Sigma\Theta(\xobj_{i+1},\xobj_{i})\otimes\Sigma\Theta(\xobj_{i+2},\xobj_{i+1})
\xrightarrow{\mu}\Sigma\Theta(\xobj_{i+2},\xobj_{i}).
\end{equation*}

Lorsque $\Theta = \Omega_n^{epi}$,
on retrouve la d\'efinition utilis\'ee au~\S\ref{KoszulConstruction}.

\subsubsection{La construction cobar-bar appliqu\'ee aux dg-cat\'egories}\label{DGConstructions:CobarBarCategory}
La construction bar d'une dg-cat\'egorie $B(\Theta)$
h\'erite d'une diagonale
\begin{equation*}
B(\Theta)(\bobj,\aobj)
\xrightarrow{\Delta}\bigoplus_{\xobj\in\X}
B(\Theta)(\xobj,\aobj)
\otimes B(\Theta)(\bobj,\xobj)
\end{equation*}
d\'efinie composante par composante
par les morphismes de d\'econcat\'enation
\begin{multline*}
\underbrace{\{\Sigma\Theta(\xobj_1,\aobj)\otimes\cdots
\otimes\Sigma\Theta(\bobj,\xobj_{m-1})\}}_{\subset B(\Theta)} \\
\rightarrow
\underbrace{\{\Sigma\Theta(\xobj_1,\aobj)\otimes\cdots
\otimes\Sigma\Theta(\xobj_k,\xobj_{k-1})\}}_{\subset B(\Theta)} \\
\otimes
\underbrace{\{\Sigma\Theta(\xobj_{k+1},\xobj_k)\otimes\cdots
\otimes\Sigma\Theta(\bobj,\xobj_{m-1})\}}_{\subset B(\Theta)}.
\end{multline*}
L'identit\'e $B(\Theta)(\xobj,\xobj) = \kk$
nous donne aussi un morphisme de counit\'e sur $B(\Theta)$,
de sorte que le dg-graphe $B(\Theta)$
forme une dg-cocat\'egorie.
On note aussi que cette dg-cocat\'egorie est, d'apr\`es les observations du~\S\ref{DGConstructions:BarCategory}, connexe
au sens d\'efini au~\S\ref{DGConstructions:CobarCategory}.

On forme la construction cobar associ\'ee \`a~$B(\Theta)$.
On adoptera la notation $\widetilde{\Gamma} = \widetilde{B}(\Theta)$
pour le dg-graphe connexe associ\'ee \`a cette dg-cocat\'egorie $\Gamma = B(\Theta)$.
On a alors la proposition suivante~:

\begin{prop}\label{DGConstructions:CobarBarAugmentation}\hspace*{2mm}
\begin{enumerate}
\item\label{CobarBarAugmentation:Construction}
L'homomorphisme de dg-graphes $\epsilon: \widetilde{B}(\Theta)\rightarrow\Theta$
d\'efini par les projections sur les composantes tensorielles d'ordre $m=1$
de $B(\Theta)$
induit un morphisme de dg-cat\'egories~:
\begin{equation*}
B^c(B(\Theta)) = (\Free(\Sigma^{-1}\widetilde{B}(\Theta)),\partial)
\xrightarrow{\epsilon}\Theta.
\end{equation*}
\item\label{CobarBarAugmentation:Acyclicity}
Et le morphisme d'augmentation $\epsilon: B^c(B(\Theta))\rightarrow\Theta$
ainsi d\'efini
est une \'equi\-va\-lence faible.
\end{enumerate}
\end{prop}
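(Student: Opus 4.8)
This is the classical cobar-bar resolution result, transported to the setting of dg-categories with a fixed object set. We must show that the counit $\epsilon\colon B^c(B(\Theta))\to\Theta$ is a well-defined morphism of dg-categories and that it is a weak equivalence, for every connected dg-category $\Theta\in\dg\Cat_{\X}^+$.

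Let me plan both parts.

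=== PROOF PROPOSAL ===

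The plan is to treat the two assertions separately, reducing each to a formal check plus one acyclicity argument. For part~(\ref{CobarBarAugmentation:Construction}), recall from~\S\ref{DGConstructions:QFreeCategories} that an homomorphism of dg-graphs $\epsilon\colon\Sigma^{-1}\widetilde{B}(\Theta)\to\Theta$ extends to a morphism of dg-categories $\phi_\epsilon\colon B^c(B(\Theta))\to\Theta$ if and only if it satisfies the compatibility relation~(**), namely $\delta\epsilon(\alpha)-\epsilon(\delta\alpha)=\phi_\epsilon(\partial\alpha)$ for every generator $\alpha\in\Sigma^{-1}\widetilde{B}(\Theta)$. First I would observe that $\epsilon$ picks out the tensor components of weight $m=1$ in $B(\Theta)$, that is the desuspension of $\Sigma\widetilde\Theta$, so that on such a generator $\epsilon$ recovers the underlying element of $\Theta$. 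I would then verify~(**) directly: the torsion homomorphism $\partial$ on the cobar construction is built from the diagonal of $B(\Theta)$, and its image under $\phi_\epsilon$ reassembles precisely the bar differential $\partial$ of $B(\Theta)$ described in~\S\ref{DGConstructions:BarCategory}; the internal differential term $\delta\epsilon-\epsilon\delta$ matches the part of the bar differential coming from the internal differential of $\Theta$. This is a bookkeeping computation with suspensions and signs, entirely analogous to the algebra case in~\cite{HusemollerMooreStasheff}, and I would present it by naming the components rather than writing every sign.

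For part~(\ref{CobarBarAugmentation:Acyclicity}), the strategy is the standard contracting-homotopy argument. I would equip $B^c(B(\Theta))$ with the filtration by tensor-length in the cobar direction (equivalently by the number of bar-generators appearing), so that the associated graded complex has as its differential only the ``internal'' piece of the cobar-bar differential—the piece that does not use the product of $\Theta$ to glue two bar-generators together. On this associated graded object one recognizes, hom-object by hom-object, the normalized cobar-bar complex of a coaugmented coalgebra, for which the classical explicit contracting homotopy $s$ (inserting or extracting a bar-length-one generator, as in~\cite[Chapter 5]{Hasegawa} or~\cite{HusemollerMooreStasheff}) exhibits $\epsilon$ as a quasi-isomorphism. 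Concretely, the composite $B(\Theta)\hookrightarrow B^c(B(\Theta))$ given by the weight-one inclusion provides a section of $\epsilon$ up to the homotopy $s$, yielding $\delta(s)=\id-\eta\epsilon$ on each hom-object. The connectivity hypothesis $\Theta\in\dg\Cat_{\X}^+$ is what guarantees that the filtration is exhaustive and bounded below in each bidegree, so that the contracting homotopy is well-defined degreewise and no completion issues arise.

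The main obstacle, and the place where the fixed-object-set setting differs from ordinary algebra, is bookkeeping the convergence of the spectral sequence associated to the tensor-length filtration. In the algebra case one works over a field (or a fixed ground ring) and the cobar-bar complex is a single graded object; here each hom-object $B^c(B(\Theta))(\bobj,\aobj)$ is a sum over chains $\bobj=\xobj_m,\dots,\xobj_0=\aobj$ of composable bar-generators, and one must check that for fixed source and target only finitely many terms contribute in each total degree. This is exactly where the connectivity condition $\deg(\bobj)-\deg(\aobj)>0$ for $\bobj\neq\aobj$ does the work: it forces $\deg\xobj_{i+1}-\deg\xobj_i>0$ along every chain, hence bounds the tensor length $m$ by $\deg(\bobj)-\deg(\aobj)$, making each hom-object degreewise finite in the relevant sense and collapsing the convergence question to the elementary boundedness already used in Proposition~\ref{DGConstructions:CobarEquivalence}. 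Once this boundedness is noted, the spectral sequence converges and the $E^1$-computation (the acyclicity of the internal cobar-bar differential on each hom-object) finishes the proof. I would therefore organize the write-up as: (i) the algebraic identity~(**) proving $\epsilon$ is a morphism; (ii) the filtration and its exhaustiveness via connectivity; (iii) the identification of the associated graded with a classical cobar-bar complex and the invocation of the standard contracting homotopy.
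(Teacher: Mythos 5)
Your treatment of assertion~(\ref{CobarBarAugmentation:Construction}) coincides with the paper's: the paper reduces it to the verification of relation~(**) of~\S\ref{DGConstructions:QFreeCategories} (left as an exercise), and the check you outline on generators of word-length $m=1$, $m=2$, $m\geq 3$ is exactly that verification. For assertion~(\ref{CobarBarAugmentation:Acyclicity}) the paper merely cites~\cite[II \S 4]{HusemollerMooreStasheff}; you attempt to reproduce the classical filtration argument, which is a legitimate plan, but the filtration you name does not do what you claim, and as written the key step fails.

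The differential of $B^c(B(\Theta))=(\Free(\Sigma^{-1}\widetilde{B}(\Theta)),\partial)$ has three pieces: (a)~the internal differential of $\Theta$ applied to each letter, (b)~the bar differential internal to each bar-word, which merges two adjacent letters \emph{using the composition of $\Theta$} (\S\ref{DGConstructions:BarCategory}), and (c)~the cobar torsion, which deconcatenates one bar-generator into two and uses no composition at all (\S\ref{DGConstructions:CobarCategory}). In particular there is no differential that ``glues two bar-generators together using the product of $\Theta$''; you seem to have the bar-of-cobar in mind rather than the cobar-of-bar. Your filtration -- by the number of cobar factors, i.e.\ of bar-generators -- is shifted only by~(c); its associated graded therefore retains (a)$+$(b), so it \emph{does} involve the product of $\Theta$, contrary to your description. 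Concretely the associated graded is $\Free(\Sigma^{-1}\widetilde{B}(\Theta))$ with the differential induced by that of $B(\Theta)$, so the next page of the spectral sequence is the free category on $\Sigma^{-1}$ of the homology of $\widetilde{B}(\Theta)$; but computing $H_*(B(\Theta))$ is precisely the Koszul-type problem of this paper, so this route is circular and the argument does not close.

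The repair is to filter instead by the total number of letters of $\Theta$ (the weight): $F_p$ is spanned by the tensors involving at most $p$ factors from $\widetilde{\Theta}$ in all bar-words together. Pieces (a) and (c) preserve the weight while piece (b) lowers it by one, so the associated graded is the cobar-bar construction of the dg-graph $\widetilde{\Theta}$ equipped with the \emph{zero} composition -- exactly the ``product-free'' complex you described. Hom-object by hom-object, in weight $N\geq 2$ this deconcatenation complex is a tensor product of $N-1$ acyclic two-term complexes $\kk\xrightarrow{\simeq}\kk$ (together with the internal differential of the letters), hence contributes nothing, while weights $0$ and $1$ give $\kk 1_{\xobj}$ and $\widetilde{\Theta}(\bobj,\aobj)$; thus $E^1=\Theta$ and $\epsilon$ induces this isomorphism. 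Your convergence remark then applies verbatim and is indeed where connectivity of $\Theta\in\dg\Cat_{\X}^+$ enters: on $B^c(B(\Theta))(\bobj,\aobj)$ the weight is bounded by $\deg(\bobj)-\deg(\aobj)$, so the filtration is bounded and exhaustive. (Also, the chain-level section of $\epsilon$ is $\eta:\Theta\rightarrow B^c(B(\Theta))$, $\alpha\mapsto\Sigma^{-1}\Sigma\alpha$, not an inclusion of $B(\Theta)$, and the contracting homotopy you invoke exists on the associated graded level, not globally.) With this one substitution your write-up becomes the standard argument to which the paper delegates.
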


\begin{proof}
La d\'emonstration de l'assertion~(\ref{CobarBarAugmentation:Construction})
se r\'eduit \`a une v\'erification facile, laiss\'ee en exercice,
de la relation~(**)
du~\S\ref{DGConstructions:QFreeCategories}.

On renvoie \`a~\cite[II \S 4]{HusemollerMooreStasheff}
pour une d\'emonstration de l'assertion~(\ref{CobarBarAugmentation:Acyclicity})
dans le cadre des dg-alg\`ebres.
\end{proof}

Cettte construction cobar-bar $B^c(B(\Theta))$ d\'efinit ainsi un mod\`ele quasi-libre naturel de $\Theta$
dans la cat\'egorie des dg-cat\'egories $\dg\Cat_{\X}$.

\subsubsection{Cat\'egories de diagrammes associ\'ees \`a une dg-cat\'egorie}\label{DGConstructions:DiagramCategories}
La notion de diagramme sur une petite cat\'egorie poss\`ede une extension naturelle dans le cadre des dg-cat\'egories~:
un $\Theta$-diagramme covariant $T: \Theta\rightarrow\dg\Mod$
consiste en la donn\'ee d'une application $T: \X\rightarrow\dg\Mod$
qui associe un dg-module $T(\xobj)\in\dg\Mod$ \`a chaque objet $\xobj\in\X$
et de morphismes de dg-modules
\begin{equation*}
\Theta(\bobj,\aobj)\otimes T(\bobj)\xrightarrow{T^{\sharp}} T(\aobj),
\end{equation*}
pour chaque couple d'objets $(\bobj,\aobj)\in\X\times\X$,
satisfaisant des relations d'unit\'e et d'associativit\'e standard~;
la notion de $\Theta$-diagramme contravariant est d\'efinie
de fa\c con sym\'etrique,
avec des morphismes
\begin{equation*}
\Theta(\bobj,\aobj)\otimes S(\aobj)\xrightarrow{S^{\sharp}} S(\bobj)
\end{equation*}
agissant en sens inverse.

La cat\'egorie des diagrammes (covariants) associ\'ee \`a $\Theta$
sera not\'ee $\dg\Mod^{\Theta}$.
On a une \'equivalence formelle entre la cat\'egorie des $\Theta$-diagrammes contravariants
et la cat\'egorie des diagrammes covariants sur la dg-cat\'egorie $\Theta^{op}$
telle que $\Theta^{op}(\bobj,\aobj) = \Theta(\aobj,\bobj)$.

\medskip
Lorsque $\X$ est fini, le dictionnaire de~\S\ref{DGConstructions:DGAlgebraEquivalence},
donne une \'equivalence de cat\'egories
entre $\dg\Mod^{\Theta}$
et la cat\'egories des modules \`a gauche sur la dg-alg\`ebre $\Theta_{\X}$
associ\'ee \`a la dg-cat\'egorie $\Theta$~:
on associe d'abord \`a toute collection $T(\xobj)$, $\xobj\in\X$,
le dg-module $T_{\X} = \bigoplus_{\xobj\in\X} T(\xobj)$
muni de la structure de $\kk_{\X}$-module telle que $T(\xobj) = 1_{\xobj}\cdot T_{\X}$~;
lorsque $T$ est un $\Theta$-diagramme,
on a un produit $\Theta_{\X}\otimes_{\kk_{\X}}T_{\X}\rightarrow T_{\X}$
donn\'e composante par composante par l'action de $\Theta$
sur $T$,
qui fait de $T_{\X}$ un module sur $\Theta_{\X}$.
On a une \'equivalence de cat\'egories analogue
entre la cat\'egorie des diagrammes contravariants $\dg\Mod^{\Theta^{op}}$
et la cat\'egories des modules \`a droite sur $\Theta_{\X}$.

\medskip
On s'int\'eresse aux diagrammes sur une construction cobar $B^c(\Gamma)$.
On section de la proposition suivante~:

\begin{prop}\hspace*{2mm}\label{DGConstructions:EndomorphismCategory}
\begin{enumerate}
\item
Une collection de dg-modules $T(\xobj)$, $\xobj\in\X$,
forme un diagramme sur la dg-cat\'egorie $\End_T\in\dg\Cat_{\X}$
telle que
\begin{equation*}
\End_T(\bobj,\aobj) = \Hom_{\dg\Mod}(T(\bobj),T(\aobj)).
\end{equation*}
\item
De plus,
munir la collection $T(\xobj)$, $\xobj\in\X$,
d'une structure de $\Theta$-diagramme
revient \`a se donner un morphisme de dg-cat\'egories $\phi: \Theta\rightarrow\End_T$.
\end{enumerate}
\end{prop}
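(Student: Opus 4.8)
Le plan est de reconnaître $\End_T$ comme la sous-dg-catégorie pleine de $\dg\Mod$, vue comme catégorie enrichie sur elle-même via son hom-interne, engendrée par les objets $T(\xobj)$, $\xobj\in\X$. Pour l'assertion~(1), je commencerais par définir les morphismes de composition
\begin{equation*}
\End_T(\xobj,\aobj)\otimes\End_T(\bobj,\xobj)\xrightarrow{\mu}\End_T(\bobj,\aobj)
\end{equation*}
par la composition usuelle des homomorphismes, $\mu(f\otimes g) = f\circ g$, et les morphismes d'identité $\kk\rightarrow\End_T(\xobj,\xobj)$ par $1\mapsto\id_{T(\xobj)}$. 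Les axiomes d'associativité et d'unité sont immédiats puisqu'il s'agit de la composition et des identités des morphismes de $\kk$-modules sous-jacents. Le seul point à vérifier est que $\mu$ commute aux différentielles~: d'après la formule du~\S\ref{Background:TensorStructures} pour la différentielle du hom-interne, cette compatibilité équivaut à la règle de dérivation $\delta(f\circ g) = \delta(f)\circ g + \pm f\circ\delta(g)$, qui résulte formellement de la structure de catégorie monoïdale symétrique fermée de $\dg\Mod$, le signe étant $\pm = (-1)^{\deg f}$. Ceci munit $\End_T$ d'une structure de dg-catégorie. La structure de $\End_T$-diagramme sur la collection $T(\xobj)$ est alors donnée par les morphismes d'évaluation (la counité de l'adjonction)
\begin{equation*}
\End_T(\bobj,\aobj)\otimes T(\bobj)\rightarrow T(\aobj),
\end{equation*}
dont la naturalité et l'associativité fournissent exactement les relations d'unité et d'associativité requises.

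Pour l'assertion~(2), j'utiliserais l'adjonction produit tensoriel--hom-interne de $\dg\Mod$. Se donner une structure de $\Theta$-diagramme sur la collection $T(\xobj)$ revient à se donner des morphismes de dg-modules $T^{\sharp}: \Theta(\bobj,\aobj)\otimes T(\bobj)\rightarrow T(\aobj)$, lesquels correspondent bijectivement, par adjonction, à des morphismes
\begin{equation*}
\Theta(\bobj,\aobj)\xrightarrow{\phi}\Hom_{\dg\Mod}(T(\bobj),T(\aobj)) = \End_T(\bobj,\aobj).
\end{equation*}
Puisque l'adjonction est un isomorphisme de dg-modules, les morphismes de degré $0$ se correspondent, de sorte que les $T^{\sharp}$ chain maps correspondent bien aux $\phi$ chain maps. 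Il reste à traduire les axiomes de $\Theta$-diagramme en termes de $\phi$~: l'axiome d'unité, exprimant que $1_{\bobj}$ agit par l'identité sur $T(\bobj)$, équivaut à $\phi(1_{\bobj}) = \id_{T(\bobj)}$, soit la préservation des identités~; l'axiome d'associativité, exprimant la compatibilité de l'action avec le produit de composition de $\Theta$, se transpose en l'identité $\phi(\alpha\cdot\beta) = \phi(\alpha)\circ\phi(\beta)$, soit la préservation de la composition. Ainsi $\phi$ est un morphisme de dg-catégories si et seulement si l'action adjointe satisfait les axiomes de $\Theta$-diagramme.

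La partie essentielle du travail est purement formelle et ne présente pas d'obstacle véritable~: une fois posée l'adjonction de $\dg\Mod$, tout revient à dérouler le dictionnaire entre morphismes adjoints et axiomes catégoriques. Le seul point demandant un soin minime est le suivi des signes issus de l'isomorphisme de symétrie, mais ceux-ci sont entièrement pris en charge par le formalisme de la catégorie monoïdale symétrique fermée sous-jacente.
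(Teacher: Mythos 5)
Votre démonstration est correcte et correspond exactement à ce que le papier entend par sa preuve d'un seul mot, « Formel. »~: la vérification de la règle de Leibniz pour la composition dans $\End_T$ et la traduction des axiomes de diagramme via l'adjonction produit tensoriel--hom-interne sont précisément les étapes routinières que l'auteur laisse implicites. Vous avez simplement explicité l'argument formel que le papier considère comme évident.
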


\begin{proof}Formel.\end{proof}

Une structure de $B^c(\Gamma)$-diagramme covariant sur une collection $T(\bobj)$, $\bobj\in\X$,
est donc donn\'ee par un morphisme de dg-cat\'egories $\psi: B^c(\Gamma)\rightarrow\End_T$.
Les observations du~\S\ref{DGConstructions:QFreeCategories}
montrent que ce morphisme $\psi = \psi_g$
est d\'etermin\'e par sa restriction $g$ au dg-graphe $\Sigma^{-1}\widetilde{\Gamma}$,
laquelle associe un homomorphisme de dg-modules $\alpha_*: T(\bobj)\rightarrow T(\aobj)$
de degr\'e $d-1$
\`a chaque morphisme g\'en\'erateur $\alpha\in\Gamma(\bobj,\aobj)$
de degr\'e $d$,
de telle sorte que l'on a la relation
\begin{equation*}
\delta(\alpha_*) = \psi_g((\delta+\partial)(\alpha))
\end{equation*}
dans $\Hom_{\dg\Mod}(T(\bobj),T(\aobj))$.
L'homomorphisme $g: \Sigma^{-1}\widetilde{\Gamma}\rightarrow\End_T$
est par adjonction \'equivalent \`a une collection d'homomorphismes
\begin{equation*}
\Gamma(\bobj,\aobj)\otimes T(\bobj)\xrightarrow{g^{\sharp}} T(\aobj)
\end{equation*}
de degr\'e $-1$.

On a une observation analogue pour les $B^c(\Gamma)$-diagrammes contravariants
car on constate que la cat\'egorie sous-jacente \`a $B^c(\Gamma)^{op}$
s'identifie \`a la construction cobar $B^c(\Gamma{}^{op})$ sur le diagramme $\Gamma{}^{op}$
tel que $\Gamma{}^{op}(\aobj,\bobj) = \Gamma(\bobj,\aobj)$.
Une structure de $B^c(\Gamma)$-diagramme contravariant sur une collection $S(\aobj)$, $\aobj\in\X$,
est donc donn\'ee
par un morphisme de dg-cat\'egories $\phi: B^c(\Gamma^{op})\rightarrow \End_S$.
L'homomorphisme $f: \Sigma^{-1}\widetilde{\Gamma}{}^{op}\rightarrow\End_S$
d\'eterminant ce morphisme $\phi = \phi_f$
est par adjonction \'equivalent \`a une collection d'homomorphismes
\begin{equation*}
\Gamma(\bobj,\aobj)\otimes S(\aobj)\xrightarrow{f^{\sharp}} S(\bobj)
\end{equation*}
de degr\'e $-1$.

\subsubsection{Complexes \`a coefficients associ\'es \`a une construction cobar}\label{DGConstructions:CobarCoefficientComplexes}
On se donne toujours un couple $(S,T)$
tel que $S$ est un diagramme contravariant et $T$ est un diagramme covariant
sur la dg-cat\'egorie $B^c(\Gamma)$
associ\'ee \`a une dg-cocat\'egorie.
On note $S\otimes_{\X}\Gamma\otimes_{\X} T$
le produit tensoriel
\begin{equation*}
S\otimes_{\X}\Gamma\otimes_{\X} T
= \bigoplus_{(\bobj,\aobj)\in\X\times\X} S(\aobj)
\otimes\Gamma(\bobj,\aobj)\otimes T(\bobj)
\end{equation*}
qui dans notre dictionnaire entre dg-cat\'egorie et dg-alg\`ebre
correspond au produit tensoriel sur $\kk_{\X}$
des $\kk_{\X}$-modules associ\'es aux collections sous-jacentes des objets $S$, $\Gamma$ et $T$.
Ce produit tensoriel est muni d'un homomorphisme de torsion naturel
\begin{equation*}
\partial: S\otimes_{\X}\Gamma\otimes_{\X} T\rightarrow S\otimes_{\X}\Gamma\otimes_{\X} T
\end{equation*}
d\'efini composante par composante, pour tout couple de $B^c(\Gamma)$-diagrammes $(S,T)$,
par l'homomorphisme compos\'e
\begin{multline*}
S(\aobj)\otimes\Gamma(\bobj,\aobj)\otimes T(\bobj)\\
\begin{aligned}
& \xrightarrow{\Delta_*}
\bigoplus_{\xobj\in\X}
S(\aobj)\otimes\Gamma(\xobj,\aobj)
\otimes\Gamma(\bobj,\xobj)
\otimes T(\bobj)\\
& \xrightarrow{(f^{\sharp}_*,g^{\sharp}_*)}
\Bigl\{\bigoplus_{\xobj\in\X}
S(\xobj)
\otimes\Gamma(\bobj,\xobj)
\otimes T(\bobj)\Bigr\}
\oplus
\Bigl\{\bigoplus_{\xobj\in\X}
S(\aobj)
\otimes\Gamma(\xobj,\aobj)
\otimes T(\xobj)\Bigr\},
\end{aligned}
\end{multline*}
d\'etermin\'e par la diagonale de $\Gamma$
et l'action de $\Gamma\subset B^c(\Gamma)$
sur $S$ et $T$.
La relation des homomorphismes de torsion $\delta(\partial) + \partial^2 = 0$
se d\'eduit ais\'ement des \'equations du~\S\ref{DGConstructions:QFreeCategories}
pour les morphismes $\phi_f$ et $\phi_g$.
Par cons\'equent, on a un dg-module tordu bien d\'efini
$(S\otimes_{\X}\Gamma\otimes_{\X} T,\partial)$
naturellement associ\'e \`a tout couple de $B^c(\Gamma)$-diagrammes $(S,T)$.
Le complexe bar \`a coefficients associ\'e \`a une dg-cat\'egorie connexe $\Theta$
s'identifie au complexe tordu
$B(S,\Theta,T) = (S\otimes_{\X} B(\Theta)\otimes_{\X} T,\partial)$
associ\'e \`a la construction bar de~$\Theta$.

\medskip
Le produit tensoriel au-dessus d'une petite cat\'egorie $S\otimes_{\Theta} T$
poss\`ede une extension naturelle
dans le cadre des dg-cat\'egories.
On a aussi une extension naturelle des foncteurs $\Tor$
au diagrammes sur une dg-cat\'egorie.
Le produit tensoriel $S\otimes_{\Theta} T$ s'identifie en fait au produit tensoriel $S_{\X}\otimes_{\Theta_{\X}} T_{\X}$
des modules associ\'es \`a $S$ et $T$
sur la dg-alg\`ebre associ\'ee \`a $\Theta$
et le foncteur $\Tor$ se d\'etermine par les m\'ethodes d'alg\`ebre homologique diff\'erentielle standard
(on appliquera par exemple~\cite[\S\S 18-20]{FelixHalperinThomas}).

Les complexes tordus d\'efinis au~\S\ref{DGConstructions:CobarCoefficientComplexes}
satisfont la relation
\begin{equation*}
\Tor^{B^c(\Gamma)}_*(S,T) = H_*(S\otimes_{\X}\Gamma\otimes_{\X} T,\partial),
\end{equation*}
pour tout couple de $B^c(\Gamma)$-diagrammes dg-cofibrants $(S,T)$,
car on a une \'equivalence faible de dg-cocat\'egories $\eta: \Gamma\xrightarrow{\sim} B(B^c(\Gamma))$ d'apr\`es~\cite[II \S 4]{HusemollerMooreStasheff},
et un argument spectral standard nous permet de comparer le complexe $(S\otimes_{\X}\Gamma\otimes_{\X} T,\partial)$
au complexe bar \`a coefficients
$B(S,B^c(\Gamma),T) = (S\otimes_{\X} B(B^c(\Gamma))\otimes_{\X} T,\partial)$
calculant $\Tor^{B^c(\Gamma)}_*(S,T)$.

\subsection{Construction cobar et mod\`ele minimal de la cat\'egorie des arbres \'elagu\'es}\label{PrunedTreeCobar}
La construction bar $B(\Omega_n^{epi})$
du~\S\ref{KoszulConstruction}
s'identifie \`a la construction bar de la cat\'egorie enrichie en $\kk$-modules $\Omega_n^{epi}$
vue comme une dg-cat\'egorie concentr\'ee en degr\'e $0$.
Cette construction bar h\'erite d'apr\`es l'observation du~\S\ref{DGConstructions:CobarBarCategory}
d'une diagonale
\begin{equation*}
B(\Omega_n^{epi})(\tauobj,\sigmaobj)
\xrightarrow{\Delta}\bigoplus_{\thetaobj}
B(\Omega_n^{epi})(\thetaobj,\sigmaobj)\otimes B(\Omega_n^{epi})(\tauobj,\thetaobj),
\end{equation*}
qui en fait une dg-cocat\'egorie connexe.
Si on revient \`a la d\'efinition du~\S\ref{KoszulConstruction},
alors cette diagonale est donn\'ee par la formule
\begin{equation*}
\Delta\{\tauobj_0\xleftarrow{u_1}\cdots\xleftarrow{u_d}\tauobj_d\}
= \sum_{k = 0}^{d} \{\tauobj_0\xleftarrow{u_1}\cdots\xleftarrow{u_k}\tauobj_k\}
\otimes\{\tauobj_k\xleftarrow{u_{k+1}}\cdots\xleftarrow{u_d}\tauobj_d\}
\end{equation*}
pour les g\'en\'erateurs de $B(\Omega_n^{epi})$.

Le dg-graphe $K(\Omega_n^{epi})$
est muni, \`a l'instar de la construction bar $B(\Omega_n^{epi})$,
d'une diagonale coassociative
\begin{equation*}
K(\Omega_n^{epi})(\tauobj,\sigmaobj)
\xrightarrow{\Delta}\bigoplus_{\thetaobj}
K(\Omega_n^{epi})(\thetaobj,\sigmaobj)\otimes K(\Omega_n^{epi})(\tauobj,\thetaobj),
\end{equation*}
qui est simplement d\'efinie
par la formule
\begin{equation*}
\Delta\{u\} = \sum_{u = v w} \{v\}\otimes\{w\}
\end{equation*}
pour tout morphisme $u: \tauobj\rightarrow\sigmaobj$.
On a aussi l'identit\'e $K(\Omega_n^{epi})(\tauobj,\tauobj) = \kk$,
pour tout $\tauobj\in\Ob\Omega_n^{epi}$.

La construction de Koszul $K(\Omega_n^{epi})$ forme ainsi une dg-cocat\'egorie connexe.
On utilise la notation $\widetilde{\Gamma} = \widetilde{K}(\Omega_n^{epi})$
pour le dg-graphe connexe associ\'ee \`a cette dg-cocat\'egorie $\Gamma = K(\Omega_n^{epi})$.

On peut donc appliquer la construction cobar \`a $K(\Omega_n^{epi})$
pour obtenir une dg-cat\'egorie quasi-libre
\begin{equation*}
B^c(K(\Omega_n^{epi})) = (\Free(\Sigma^{-1}\widetilde{K}(\Omega_n^{epi})),\partial)
\end{equation*}
associ\'ee \`a $\Omega_n^{epi}$.
La d\'erivation de torsion de $B^c(K(\Omega_n^{epi}))$
est donn\'ee par la formule
\begin{equation*}
\partial\{u\} = \sum_{u = v w} (-1)^{\deg v}\cdot\{v\}\otimes\{w\},
\end{equation*}
pour tout \'el\'ement g\'en\'erateur $\{u\}\in K(\Omega_n^{epi})(\tauobj,\sigmaobj)$.
Le signe additionel $\pm = (-1)^{\deg v}$
provient de la commutation implicite d'une d\'esuspension
avec le facteur $\{v\}\in\break\widetilde{K}(\Omega_n^{epi})(\thetaobj,\sigmaobj)$
dans la d\'efinition cat\'egorique de $\partial$
au~\S\ref{DGConstructions:CobarCategory}.

\medskip
On constate imm\'ediatement que~:

\begin{obsv}\label{PrunedTreeCobar:DiagonalKoszulEmbedding}
Le morphisme de dg-graphes $K(\Omega_n^{epi})\xrightarrow{\iota} B(\Omega_n^{epi})$
d\'efini par la Proposition~\ref{KoszulConstruction:KoszulEmbedding}
est un morphisme de dg-cocat\'egories.
\end{obsv}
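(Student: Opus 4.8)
Le plan est de v\'erifier que $\iota$ respecte les deux donn\'ees qui d\'efinissent une structure de dg-cocat\'egorie~: les diagonales et les counit\'es. La compatibilit\'e aux counit\'es est imm\'ediate puisque l'on a $K(\Omega_n^{epi})(\tauobj,\tauobj) = \kk = B(\Omega_n^{epi})(\tauobj,\tauobj)$ et que $\iota$ se r\'eduit \`a l'identit\'e sur ces hom-objets diagonaux, l'unique d\'ecomposition d'un morphisme identit\'e en facteurs de degr\'e $1$ \'etant la d\'ecomposition vide. Tout le travail porte donc sur l'identit\'e
\begin{equation*}
\Delta\,\iota\{u\} = (\iota\otimes\iota)\,\Delta\{u\},
\end{equation*}
\`a \'etablir pour tout morphisme ensembliste $u: \tauobj\rightarrow\sigmaobj$.

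D'abord je d\'evelopperais le membre de gauche en injectant la formule de $\iota\{u\}$ du~\S\ref{KoszulConstruction:KoszulCategory}, puis en appliquant la diagonale de d\'econcat\'enation de $B(\Omega_n^{epi})$ rappel\'ee au~\S\ref{PrunedTreeCobar}. On obtient une somme index\'ee par les couples form\'es d'une d\'ecomposition compl\`ete $u = u_1\cdots u_d$ en morphismes de degr\'e $1$ et d'un point de coupure $k\in\{0,\dots,d\}$, affect\'ee du signe $\sgn(u_1)\cdots\sgn(u_d)$. Ensuite je d\'evelopperais le membre de droite \`a l'aide de la diagonale $\Delta\{u\} = \sum_{u=vw}\{v\}\otimes\{w\}$ de $K(\Omega_n^{epi})$, puis de la formule de $\iota$ sur chaque facteur. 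Le point cl\'e est alors que la donn\'ee d'une d\'ecomposition $u = v w$ et de d\'ecompositions compl\`etes de $v$ et de $w$ en morphismes de degr\'e $1$ \'equivaut exactement \`a la donn\'ee d'une d\'ecomposition compl\`ete $u = u_1\cdots u_d$ munie d'un point de coupure~: les deux ensembles d'indexation sont en bijection canonique.

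Il reste enfin \`a contr\^oler les signes. Comme la diagonale de d\'econcat\'enation n'introduit aucun signe et que l'on a $\deg(v)+\deg(w) = \deg(u)$ le long de chaque d\'ecomposition $u = vw$, le signe produit se factorise selon la coupure,
\begin{equation*}
\sgn(u_1)\cdots\sgn(u_d) = \bigl(\sgn(u_1)\cdots\sgn(u_k)\bigr)\cdot\bigl(\sgn(u_{k+1})\cdots\sgn(u_d)\bigr),
\end{equation*}
ce qui met les deux d\'eveloppements en correspondance terme \`a terme. L'obstacle principal que j'anticipe est pr\'ecis\'ement cette v\'erification des signes, \`a mener avec les conventions du~\S\ref{KoszulConstruction:Signs}~: il faut s'assurer qu'aucun signe de suspension r\'esiduel, provenant de la d\'efinition cat\'egorique de la diagonale du complexe bar au~\S\ref{DGConstructions:CobarBarCategory}, ne vienne briser cette factorisation. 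Une fois ce point acquis, l'\'egalit\'e des deux membres r\'esulte de la bijection des ensembles d'indexation, et $\iota$ commute bien aux diagonales, ce qui ach\`eve la d\'emonstration.
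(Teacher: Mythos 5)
Votre d\'emonstration est correcte et co\"\i ncide avec ce que le papier laisse implicite~: l'\'enonc\'e y est pr\'esent\'e comme une observation sans preuve (\og on constate imm\'ediatement \fg), et votre v\'erification --- bijection entre les d\'ecompositions compl\`etes munies d'un point de coupure et les couples form\'es d'une d\'ecomposition $u = vw$ avec des d\'ecompositions compl\`etes de $v$ et $w$, factorisation triviale des signes, compatibilit\'e aux counit\'es via $\iota\{\id\} = $ tenseur vide --- est pr\'ecis\'ement l'inspection que l'auteur sous-entend. L'inqui\'etude que vous soulevez sur d'\'eventuels signes r\'esiduels est en fait sans objet~: la diagonale de d\'econcat\'enation ne permute aucun facteur tensoriel et n'introduit donc aucun signe de Koszul, ce qui rend la factorisation $\sgn(u_1)\cdots\sgn(u_d) = \bigl(\sgn(u_1)\cdots\sgn(u_k)\bigr)\cdot\bigl(\sgn(u_{k+1})\cdots\sgn(u_d)\bigr)$ imm\'ediate.
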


On a donc, par fonctorialit\'e de la construction cobar, un morphisme de dg-cat\'egories
\begin{equation*}
B^c(K(\Omega_n^{epi}))\xrightarrow{B^c(\iota)}
B^c(B(\Omega_n^{epi}))
\end{equation*}
qui, par composition avec l'augmentation de $B^c(B(\Omega_n^{epi}))$,
munit la dg-cat\'egorie $B^c(K(\Omega_n^{epi}))$
d'une augmentation sur $\Omega_n^{epi}$.
On obtient, par inspection des constructions,
que le morphisme d'augmentation
\begin{equation*}
\epsilon: B^c(K(\Omega_n^{epi}))\xrightarrow{\sim}\Omega_n^{epi}
\end{equation*}
est d\'efini sur les g\'en\'erateurs du dg-graphe $K(\Omega_n^{epi})$ par~:
\begin{equation*}
\epsilon\{u\} = \begin{cases} \sgn(u)\cdot\{u\}, & \text{si $\deg u = 1$}, \\ 0, & \text{sinon}. \end{cases}
\end{equation*}

Le r\'esultat du Th\'eor\`eme~\ref{CoefficientConstructions:KoszulEquivalence}
entraine~:

\begin{lemm}\label{PrunedTreeCobar:CobarKoszulEquivalence}
Le morphisme de dg-cocat\'egories $\iota: K(\Omega_n^{epi})\rightarrow B(\Omega_n^{epi})$
induit une \'equivalence faible au niveau des constructions cobar
\begin{equation*}
\iota_*: B^c(K(\Omega_n^{epi}))\xrightarrow{\sim}B^c(B(\Omega_n^{epi})).\qed
\end{equation*}
\end{lemm}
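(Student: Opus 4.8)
The plan is to deduce the statement directly from Proposition~\ref{DGConstructions:CobarEquivalence}, the cobar-invariance result, applied to the morphism $f=\iota$. That proposition says that $B^c$ turns a weak equivalence of connected dg-cocategories whose underlying dg-graphs are cofibrant into a weak equivalence; so the whole task reduces to checking that $\iota\colon K(\Omega_n^{epi})\to B(\Omega_n^{epi})$ satisfies these three hypotheses.

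First I would record that $\iota$ is a morphism of dg-cocategories, which is exactly Observation~\ref{PrunedTreeCobar:DiagonalKoszulEmbedding}, and that both $K(\Omega_n^{epi})$ and $B(\Omega_n^{epi})$ are connected in the sense of~\S\ref{DGConstructions:CobarCategory}. The latter is immediate from the definitions: one has $K(\Omega_n^{epi})(\tauobj,\tauobj)=\kk=B(\Omega_n^{epi})(\tauobj,\tauobj)$, while both hom-objects vanish for any pair $\tauobj\neq\sigmaobj$ with $\deg(\tauobj)-\deg(\sigmaobj)\leq 0$, since every nonidentity morphism of $\Omega_n^{epi}$ strictly lowers the degree. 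That $\iota$ is a weak equivalence on each hom-object is then nothing but Theorem~\ref{CoefficientConstructions:KoszulEquivalence}, already proved.

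The one point I would treat with care, and the only genuine obstacle when $\kk$ is not a field, is the cofibrancy of $K(\Omega_n^{epi})$ and $B(\Omega_n^{epi})$ as dg-graphs. Here I would appeal to the effective characterization of cofibrations recalled in~\S\ref{Background:ModelStructure}. Each $K(\Omega_n^{epi})(\tauobj,\sigmaobj)=\bigoplus_{u}\kk\{u\}$ is a free $\kk$-module with trivial internal differential, concentrated in the single degree $\deg(\tauobj)-\deg(\sigmaobj)$, hence cofibrant; and $B(\Omega_n^{epi})(\tauobj,\sigmaobj)$ is the twisted dg-module attached to a complex whose components are the free $\kk$-modules spanned by the tuples of composable nonidentity set-morphisms, which is cofibrant by the same criterion for complexes of projective $\kk$-modules. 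Once all three hypotheses are in place, Proposition~\ref{DGConstructions:CobarEquivalence} gives at once that $\iota_*\colon B^c(K(\Omega_n^{epi}))\to B^c(B(\Omega_n^{epi}))$ is a weak equivalence, which is the assertion of the lemma; I expect no difficulty beyond this bookkeeping, the substantive work having already been carried out in Theorem~\ref{CoefficientConstructions:KoszulEquivalence}.
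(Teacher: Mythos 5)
Your proof is correct and takes exactly the paper's route: the lemma is stated there as an immediate consequence of Theorem~\ref{CoefficientConstructions:KoszulEquivalence}, the implicit mechanism being precisely Proposition~\ref{DGConstructions:CobarEquivalence} applied to the cocategory morphism $\iota$ of Observation~\ref{PrunedTreeCobar:DiagonalKoszulEmbedding}. Your explicit checks of connectedness and of dg-graph cofibrancy (free $\kk$-modules in each degree, trivial or complex-type differential) are sound and simply spell out the hypotheses the paper leaves tacit.
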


On en conclut~:

\begin{mainsectionthm}\label{PrunedTreeCobar:KoszulModel}
La construction $\Res(\Omega_n^{epi}) = B^c(K(\Omega_n^{epi}))$
d\'efinit un mod\`ele quasi-libre de $\Omega_n^{epi}$
dans la cat\'egorie des dg-cat\'egories.\qed
\end{mainsectionthm}

Ce mod\`ele quasi-libre $\Res(\Omega_n^{epi}) = B^c(K(\Omega_n^{epi}))$
est minimal
au sens que le dg-graphe $K(\Omega_n^{epi})$
est muni d'une diff\'erentielle interne triviale
et la diff\'erentielle
de la construction cobar $B^c(K(\Omega_n^{epi})) = (\Free(\Sigma^{-1}\widetilde{K}(\Omega_n^{epi})),\partial)$
applique les \'el\'ements g\'en\'erateurs sur des \'el\'ements d\'ecomposables.

On sait que $\Omega_n^{epi}$
est engendr\'ee comme cat\'egorie par les morphismes de degr\'e $1$.
La relation $H_0(B^c(K(\Omega_n^{epi}))) = \Omega_n^{epi}$
entraine la propri\'et\'e suppl\'ementaire suivante
qui n'est pas apparente dans la d\'efinition du~\S\ref{PrunedTrees:Definition}~:

\begin{mainsectioncor}
Les relations entre morphismes de degr\'e $1$
de $\Omega_n^{epi}$
sont engendr\'ees par les identit\'es quadratiques
d\'efinies par les diagrammes des figures~\ref{Fig:ConsecutiveVertexMerging},~\ref{Fig:DisjointVertexMerging},~\ref{Fig:OrderedVertexMerging}
de la Proposition~\ref{KoszulConstruction:KoszulEmbedding}.
\end{mainsectioncor}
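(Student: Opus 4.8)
Le plan est de lire une pr\'esentation de $\Omega_n^{epi}$ directement sur l'homologie de degr\'e~$0$ du mod\`ele quasi-libre $\Res(\Omega_n^{epi}) = B^c(K(\Omega_n^{epi}))$ obtenu au Th\'eor\`eme~\ref{PrunedTreeCobar:KoszulModel}. Comme l'augmentation $\epsilon\colon B^c(K(\Omega_n^{epi}))\xrightarrow{\sim}\Omega_n^{epi}$ est une \'equivalence faible et que $\Omega_n^{epi}$, vue comme dg-cat\'egorie, est concentr\'ee en degr\'e~$0$, on dispose de l'identification $H_0(B^c(K(\Omega_n^{epi}))) = \Omega_n^{epi}$ de cat\'egories enrichies en $\kk$-modules. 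Tout revient alors \`a expliciter ce $H_0$ \`a l'aide de la structure quasi-libre de la construction cobar.

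Je commencerais par analyser la graduation de $\Free(\Sigma^{-1}\widetilde{K}(\Omega_n^{epi}))$. La d\'esuspension abaissant d'une unit\'e le degr\'e de chaque g\'en\'erateur, un tenseur $\{u_1\}\otimes\dots\otimes\{u_m\}$ y est de degr\'e $\sum_{i=1}^{m}(\deg(u_i)-1)$. Puisque chaque facteur v\'erifie $\deg(u_i)\geq 1$, la composante de degr\'e~$0$ est engendr\'ee par les tenseurs dont \emph{tous} les facteurs sont de degr\'e~$1$~: elle s'identifie donc \`a la cat\'egorie libre sur les morphismes de degr\'e~$1$ de $\Omega_n^{epi}$. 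De m\^eme, la composante de degr\'e~$1$ est engendr\'ee par les tenseurs poss\'edant exactement un facteur de degr\'e~$2$, tous les autres \'etant de degr\'e~$1$.

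Je calculerais ensuite $H_0$ comme le conoyau de $\partial$ de la composante de degr\'e~$1$ vers celle de degr\'e~$0$. Un morphisme de degr\'e~$1$ de $\Omega_n^{epi}$ n'admet aucune factorisation non triviale, de sorte que $\partial$ annule les g\'en\'erateurs de degr\'e~$0$~; la composante de degr\'e~$0$ est ainsi enti\`erement form\'ee de cycles. Par la r\`egle de d\'erivation du~\S\ref{DGConstructions:QFreeCategories}, la diff\'erentielle d'un tenseur de degr\'e~$1$, de la forme $\beta\otimes\{u\}\otimes\gamma$ avec $\deg(u)=2$ et $\beta,\gamma$ de degr\'e~$0$, se r\'eduit \`a $\pm\,\beta\cdot\partial\{u\}\cdot\gamma$. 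L'image de $\partial$ est donc la congruence bilat\`ere de la cat\'egorie libre engendr\'ee par les seuls \'el\'ements
\begin{equation*}
\partial\{u\} = \sum_{u = v w} (-1)^{\deg(v)}\,\{v\}\otimes\{w\},\qquad\deg(u) = 2,
\end{equation*}
de sorte que $H_0$ se pr\'esente comme le quotient de la cat\'egorie libre sur les morphismes de degr\'e~$1$ par les relations quadratiques $\partial\{u\}=0$~; compte tenu de l'identification $H_0 = \Omega_n^{epi}$, ceci fournit pr\'ecis\'ement une pr\'esentation de $\Omega_n^{epi}$ par g\'en\'erateurs de degr\'e~$1$ et relations quadratiques.

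Il resterait \`a identifier ces relations aux trois figures. La d\'emonstration de la Proposition~\ref{KoszulConstruction:KoszulEmbedding} montre en effet que tout morphisme $u$ de degr\'e~$2$ rel\`eve exactement l'une des trois configurations des Figures~\ref{Fig:ConsecutiveVertexMerging},~\ref{Fig:DisjointVertexMerging} et~\ref{Fig:OrderedVertexMerging}, et poss\`ede pr\'ecis\'ement deux d\'ecompositions $u = v_1 w_1 = v_2 w_2$ en morphismes de degr\'e~$1$. La relation $\partial\{u\}=0$ identifie alors, au signe pr\`es, les deux facteurs $\{v_1\}\otimes\{w_1\}$ et $\{v_2\}\otimes\{w_2\}$, c'est-\`a-dire exactement l'identit\'e quadratique lue sur la figure correspondante. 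Le point le plus d\'elicat sera de justifier que l'image de $\partial$ est bien engendr\'ee, comme congruence bilat\`ere, par les seuls g\'en\'erateurs $\partial\{u\}$ et non par des tenseurs plus longs~: cela provient de la r\`egle de d\'erivation jointe \`a l'annulation de $\partial$ sur les g\'en\'erateurs de degr\'e~$1$, la compatibilit\'e des signes avec les signes oppos\'es \'etablis dans la Proposition~\ref{KoszulConstruction:KoszulEmbedding} se contr\^olant par inspection des r\`egles du~\S\ref{KoszulConstruction:Signs}.
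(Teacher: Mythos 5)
Your proposal is correct and takes essentially the same route as the paper: the paper obtains the corollary directly from the identity $H_0(B^c(K(\Omega_n^{epi}))) = \Omega_n^{epi}$ given by the quasi-free model of the Th\'eor\`eme~\ref{PrunedTreeCobar:KoszulModel}, which is precisely the computation you spell out (degree-$0$ generators of the cobar construction are the degree-$1$ morphisms, the differential of the desuspended degree-$2$ generators produces the quadratic relations, and these correspond, with the sign conventions of the Proposition~\ref{KoszulConstruction:KoszulEmbedding}, to the three configurations of the Figures~\ref{Fig:ConsecutiveVertexMerging},~\ref{Fig:DisjointVertexMerging},~\ref{Fig:OrderedVertexMerging}). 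The only point you leave implicit, and which the paper records after the statement, is that this presentation, established at the level of the $\kk$-linear category, descends to the set-theoretic category $\Omega_n^{epi}$ because its enriched hom-objects are the free $\kk$-modules on the set-theoretic morphisms.
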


Cette assertion, d\'emontr\'ee au niveau des $\kk$-modules, reste valable pour la cat\'egorie ensembliste $\Omega_n^{epi}$
puisque la version enrichie en $\kk$-modules de $\Omega_n^{epi}$
est d\'efinie par les $\kk$-modules librement engendr\'es par les morphismes ensemblistes de $\Omega_n^{epi}$.

On a donc, en corollaire du Th\'eor\`eme~\ref{PrunedTreeCobar:KoszulModel},
une pr\'esentation par g\'en\'erateurs et relations de $\Omega_n^{epi}$.
Le r\'esultat de ce corollaire peut \'egalement se d\'eduire, dans l'esprit de~\cite{BergerWreathCategories},
d'une identification de $\Omega_n^{epi}$
avec un produit en couronne it\'er\'e de cat\'egories.

\medskip
On s'int\'eresse maintenant aux diagrammes sur la dg-cat\'egorie $B^c(K(\Omega_n^{epi}))$.
Les observations suivant la Proposition~\ref{DGConstructions:EndomorphismCategory}
entrainent le r\'esultat suivant~:

\pagebreak
\begin{prop}\hspace*{2mm}\label{PrunedTreeCobar:Diagrams}
\begin{enumerate}
\item
Munir une collection de dg-modules $T(\tauobj)$, $\tauobj\in\Ob\Omega_n^{epi}$,
d'une structure de diagramme covariant sur $\Res(\Omega_n^{epi}) = B^c(K(\Omega_n^{epi}))$
revient \`a associer \`a tout morphisme $u\in\Mor_{\Omega_n^{epi}}(\tauobj,\sigmaobj)$
un homomorphisme de dg-modules de degr\'e $\deg(u)-1$
\begin{equation*}
u_*: T(\tauobj)\rightarrow T(\sigmaobj)
\end{equation*}
de telle sorte que les \'equations
\begin{equation*}
\delta(u_*) = \sum_{u = v w} (-1)^{\deg v}\cdot v_* w_*
\end{equation*}
sont satisfaites dans $\Hom_{\dg\Mod}(T(\tauobj),T(\sigmaobj))$,
la somme s'\'etendant sur l'ensemble des d\'ecompositions $u = v w$
dans la cat\'egorie $\Omega_n^{epi}$.
\item
Sym\'etriquement, munir une collection de dg-modules $S(\tauobj)$, $\tauobj\in\Ob\Omega_n^{epi}$,
d'une structure de diagramme contravariant sur $\Res(\Omega_n^{epi})$
revient \`a associer \`a tout morphisme $u\in\Mor_{\Omega_n^{epi}}(\tauobj,\sigmaobj)$
un homomorphisme de dg-modules de degr\'e $\deg(u)-1$
\begin{equation*}
u^*: S(\sigmaobj)\rightarrow S(\tauobj)
\end{equation*}
de telle sorte que les \'equations
\begin{equation*}
\delta(u^*) = \sum_{u = v w} (-1)^{\deg v}\cdot w^* v^*
\end{equation*}
sont satisfaites dans $\Hom_{\dg\Mod}(S(\sigmaobj),S(\tauobj))$.\qed
\end{enumerate}
\end{prop}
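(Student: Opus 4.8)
The plan is to obtain both assertions as a direct specialization of the general description of diagrams over a cobar construction given in \S\ref{DGConstructions:DiagramCategories}, applied to $\Gamma = K(\Omega_n^{epi})$. Recall from Proposition~\ref{DGConstructions:EndomorphismCategory} that endowing a collection $T(\tauobj)$, $\tauobj\in\Ob\Omega_n^{epi}$, with a covariant diagram structure over $\Res(\Omega_n^{epi}) = B^c(K(\Omega_n^{epi}))$ amounts to giving a morphism of dg-categories $\psi_g\colon B^c(K(\Omega_n^{epi}))\rightarrow\End_T$. Since the source is quasi-free, such a morphism is determined by its restriction $g$ to the generating dg-graph $\Sigma^{-1}\widetilde{K}(\Omega_n^{epi})$, which by adjunction assigns to each generator $\{u\}\in K(\Omega_n^{epi})(\tauobj,\sigmaobj)$ of degree $\deg(u)$ a homomorphism $u_*\colon T(\tauobj)\rightarrow T(\sigmaobj)$ of degree $\deg(u)-1$.

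First I would write down the defining equation for $\psi_g$ to be a morphism of dg-categories, namely equation~(**) of \S\ref{DGConstructions:QFreeCategories} in the form
\begin{equation*}
\delta(u_*) = \psi_g\bigl((\delta+\partial)\{u\}\bigr),
\end{equation*}
required for every generator $\{u\}$. I would then substitute the two ingredients specific to our setting. The internal differential of $K(\Omega_n^{epi})$ is trivial by construction (\S\ref{KoszulConstruction:KoszulCategory}), so $\delta\{u\} = 0$; and the torsion derivation of the cobar construction is, by the formula recorded at the start of \S\ref{PrunedTreeCobar},
\begin{equation*}
\partial\{u\} = \sum_{u = v w} (-1)^{\deg v}\cdot\{v\}\otimes\{w\}.
\end{equation*}

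Next I would evaluate the right-hand side. Because $\psi_g$ sends a decomposable tensor to the composite of the images of its factors (the multiplicative formula for $\phi_f$ in \S\ref{DGConstructions:QFreeCategories}), and composition in $\End_T$ is composition of homomorphisms, we have $\psi_g(\{v\}\otimes\{w\}) = v_* w_*$ for each decomposition $u = v w$. Substituting yields exactly
\begin{equation*}
\delta(u_*) = \sum_{u = v w} (-1)^{\deg v}\cdot v_* w_*,
\end{equation*}
which is assertion~(1); conversely, any family $(u_*)$ satisfying these relations defines a legitimate $g$, hence a diagram structure, so the correspondence is a bijection.

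For assertion~(2) I would run the same argument over the opposite dg-cocategory, using the identification $B^c(K(\Omega_n^{epi}))^{op} = B^c(K(\Omega_n^{epi})^{op})$ recalled in \S\ref{DGConstructions:DiagramCategories}, under which a contravariant diagram $S$ is a covariant diagram on the opposite category. The one point I would check most carefully --- the only genuine subtlety here --- is the order of composition: passing to the opposite reverses the multiplication in $\End_S$, so the image of $\{v\}\otimes\{w\}$ becomes $w^* v^*$ instead of $v_* w_*$, whereas the sign $(-1)^{\deg v}$ carried by each decomposition in $\partial$ is untouched. This gives the stated equation $\delta(u^*) = \sum_{u = v w} (-1)^{\deg v}\cdot w^* v^*$ and finishes the proof.
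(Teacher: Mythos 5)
Votre démonstration est correcte et suit essentiellement la même voie que l'article : celui-ci justifie la proposition uniquement par les observations qui suivent la Proposition~\ref{DGConstructions:EndomorphismCategory} (structure de diagramme $=$ morphisme de dg-cat\'egories vers $\End_T$, d\'etermin\'e sur les g\'en\'erateurs par la relation~(**) du~\S\ref{DGConstructions:QFreeCategories}), sp\'ecialis\'ees \`a $\Gamma = K(\Omega_n^{epi})$ avec sa diff\'erentielle interne triviale et la formule explicite de la d\'erivation de torsion donn\'ee au~\S\ref{PrunedTreeCobar}, ce qui est exactement ce que vous explicitez, y compris le passage \`a la cocat\'egorie oppos\'ee et le renversement de l'ordre de composition pour le cas contravariant.
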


\subsubsection{Construction cobar de la construction de Koszul et complexes \`a coefficients}\label{PrunedTreeCobar:CoefficientComplexes}
On a d\'efini au~\S\S\ref{CoefficientConstructions}-\ref{DGCoefficientConstructions}
des complexes \`a coefficients $B(S,\Omega_n^{epi},T)$
et $K(S,\Omega_n^{epi},T)$
tels que
\begin{equation*}
H_*(K(S,\Omega_n^{epi},T)) = H_*(B(S,\Omega_n^{epi},T)) = \Tor^{\Omega_n^{epi}}_*(S,T),
\end{equation*}
pour tout couple de $\Omega_n^{epi}$-diagrammes dg-cofibrants.

Le complexe $B(S,\Omega_n^{epi},T)$ utilis\'e aux~\S\S\ref{CoefficientConstructions}-\ref{DGCoefficientConstructions}
s'identifie au complexe tordu associ\'e \`a la dg-cocat\'egorie $\Gamma = B(\Omega_n^{epi})$
dans le~\S\ref{DGConstructions:CobarCoefficientComplexes}.
Le complexe $K(S,\Omega_n^{epi},T)$
s'identifie de m\^eme au complexe tordu du~\S\ref{DGConstructions:CobarCoefficientComplexes}
associ\'e \`a la dg-cocat\'egorie $\Gamma = K(\Omega_n^{epi})$
La construction de~\S\ref{DGConstructions:CobarCoefficientComplexes} donne donc une extension
des complexes \`a coefficients du~\S\ref{CoefficientConstructions}
aux $B^c(\Gamma(\Omega_n^{epi}))$-diagrammes, pour $\Gamma = B,K$.
Tout $B^c(K(\Omega_n^{epi}))$-diagramme h\'erite d'une structure de $B^c(B(\Omega_n^{epi}))$-diagramme par restriction de structure.
On peut montrer par un argument spectral standard que le morphisme de dg-cocat\'egories
$\iota: K(\Omega_n^{epi})\rightarrow B(\Omega_n^{epi})$
induit une \'equivalence faible
\begin{equation*}
\kappa: K(S,\Omega_n^{epi},T)\xrightarrow{\sim} B(S,\Omega_n^{epi},T),
\end{equation*}
pour tout couple de $B^c(K(\Omega_n^{epi}))$-diagrammes dg-cofibrants,
et on en d\'eduit l'existence de relations
\begin{equation*}
H_*(K(S,\Omega_n^{epi},T)) = H_*(B(S,\Omega_n^{epi},T)) = \Tor^{B^c(K(\Omega_n^{epi}))}_*(S,T).
\end{equation*}
Ces identit\'es \'etendent les r\'esultats obtenus
au~\S\ref{DGCoefficientConstructions}
lorsque $(S,T)$ est un couple de $\Omega_n^{epi}$-diagrammes.

\section*{Postlude : applications aux complexes bar it\'er\'es}\label{IteratedBarApplications}

Le complexe bar $n$-it\'er\'e $B^n(A)$ d'une alg\`ebre commutative $A$
s'identifie, dans l'id\'ee de~\cite{LivernetRichter},
au complexe $C_*(\underline{B}^n(A)) = K(b_n,\Omega_n^{epi},\underline{B}^n(A))$,
pour un $\Omega_n^{epi}$-diagramme $\underline{B}^n(A)$
associ\'e \`a $A$.
On a explicitement $\underline{B}^n(A)(\tauobj) = A^{\otimes\In\tauobj}$
et le morphisme $u_*: A^{\otimes\In\tauobj}\rightarrow A^{\otimes\In\sigmaobj}$
associ\'e \`a un morphisme $u: \tauobj\rightarrow\sigmaobj$
de $\Omega_n^{epi}$
effectue le produit des facteurs $A^{\otimes u^{-1}(i)}$, $i\in\In\sigmaobj$,
associ\'es aux fibres de l'application $u: \In\tauobj\rightarrow\In\sigmaobj$~:
\begin{equation*}
u_*(\bigotimes_{j\in\In\tauobj} a_j)
= \bigotimes_{i\in\In\sigmaobj}\Bigl\{\mu(\bigotimes_{j\in u^{-1}(i)} a_j)\Bigr\},
\end{equation*}
o\`u l'application $\mu$ renvoie au produit de $A$.

On montre dans~\cite{FresseIteratedBar} que la d\'efinition du complexe bar $n$-it\'er\'e $B^n(A)$
s'\'etend aux alg\`ebres sur une $E_n$-op\'erade
et d\'etermine l'homologie $H^{E_n}_*(A)$
naturellement associ\'ee \`a cette cat\'egorie d'alg\`ebre.
Ce complexe bar $n$-it\'er\'e est d\'efini par un dg-module tordu $B^n(A) = (T^n(A),\partial)$
pour un foncteur sous-jacent de la forme $T^n(A) = \bigoplus_{\tauobj} A^{\otimes\In\tauobj}$,
avec un d\'ecalage en degr\'e implicite donn\'e par l'insertion de suspensions
sur les sommets de $\tauobj$.

On fixe une $E_n$-op\'erade, soit $\EOp_n$.
Si on reprend soigneusement la construction effective de l'homomorphisme de torsion de $B^n(A)$,
telle qu'elle est donn\'ee dans~\cite[\S A.13]{FresseIteratedBar},
alors on constate que cet homomorphisme $\partial: T^n(A)\rightarrow T^n(A)$
poss\`ede une composante $\partial_u: A^{\otimes\In\tauobj}\rightarrow A^{\otimes\In\sigmaobj}$,
associ\'ee \`a chaque morphisme $u: \tauobj\rightarrow\sigmaobj$
de $\Omega_n^{epi}$,
qui est donn\'ee par une expression de la forme
\begin{equation*}
\partial_u(\bigotimes_{j\in\In\tauobj} a_j)
= \bigotimes_{i\in\In\sigmaobj}\Bigl\{\pi_i(\bigotimes_{j\in u^{-1}(i)} a_j)\Bigr\},
\end{equation*}
o\`u les applications $\pi_i$ renvoient \`a des op\'erations de l'op\'erade $\EOp_n$.
On comprend en outre que l'\'equation donn\'ee dans~\cite[\S A.13]{FresseIteratedBar}
s'interpr\`ete comme une relation $\delta(\partial_u) = \sum_{u = v w} \partial_v\partial_w$
dans $\Hom_{\dg\Mod}(A^{\otimes\In\tauobj},A^{\otimes\In\sigmaobj})$.
On en conclut,
par application de la Proposition~\ref{PrunedTreeCobar:Diagrams}
et de la d\'efinition du~\S\ref{DGConstructions:CobarCoefficientComplexes}~:
\begin{itemize}
\item la collection de dg-modules $\underline{B}^n(A)(\tauobj) = A^{\otimes\In\tauobj}$ associ\'ee \`a une $\EOp_n$-alg\`ebre $A$
h\'erite d'une structure de $B^c(K(\Omega_n^{epi}))$-diagramme
telle que $u_* = \partial_u$
donne l'action des \'el\'ements g\'en\'erateurs $\{u\}\in K(\Omega_n^{epi})$
sur $\underline{B}^n(A)$,
\item le complexe bar it\'er\'e associ\'e \`a $A$
s'identifie au complexe de Koszul\break $K(b_n,\Omega_n^{epi},\underline{B}^n(A))$
associ\'e \`a ce $B^c(K(\Omega_n^{epi}))$-diagramme $\underline{B}^n(A)$,
pour toute $\EOp_n$-alg\`ebre $A$.
\end{itemize}

On note que le diagramme $\underline{B}^n(A)$ est dg-cofibrant d\`es lors que la $\EOp_n$-alg\`ebre $A$
est cofibrante comme dg-module.
On obtient donc par la g\'en\'eralisation du Th\'eor\`eme~\ref{CoefficientConstructions:TorFunctor}
\'etablie au~\S\ref{PrunedTreeCobar:CoefficientComplexes}~:

\begin{mainthm}
On a les identit\'es~:
\begin{equation*}
H_*(B^n(A)) = H_*(K(b_n,\Omega_n^{epi},\underline{B}^n(A))) = \Tor^{B^c(K(\Omega_n^{epi}))}_*(b_n,\underline{B}^n(A))
\end{equation*}
pour toute $\EOp_n$-alg\`ebre $A$
qui est cofibrante comme dg-module.\qed
\end{mainthm}

Ce th\'eor\`eme donne la g\'en\'eralisation du r\'esultat
principal de~\cite{LivernetRichter}
aux alg\`ebres sur une op\'erade $E_n$.

\medskip
On d\'eduit ensuite du th\'eor\`eme principal de~\cite{FresseIteratedBar}~:

\begin{mainthm}\label{IteratedBarApplications:EnHomology}
On a l'identit\'e~:
\begin{equation*}
H^{\EOp_n}_*(A) = \Tor^{B^c(K(\Omega_n^{epi}))}_*(b_n,\underline{B}^n(A))
\end{equation*}
pour toute $\EOp_n$-alg\`ebre $A$
qui est cofibrante comme dg-module.\qed
\end{mainthm}

\subsubsection*{Remarque}
La relation du Th\'eor\`eme~\ref{IteratedBarApplications:EnHomology}
suppose que le foncteur $\Tor$
s'annule en degr\'e $*>0$
lorsque $A$ est une $\EOp_n$-alg\`ebre libre $A = \EOp_n(C)$.
La d\'emonstration de l'acyclicit\'e de $B^n(\EOp_n(C))$
dans~\cite[\S 8]{FresseIteratedBar}
fait appel \`a une op\'eration de Browder intervenant
dans la composante $\partial: A^{\otimes\In\underline{y}_n}\rightarrow A^{\otimes\In\underline{i}_n}$
de la diff\'erentielle de $B^n(A)$
associ\'ee au morphisme~:
\begin{equation*}
\vcenter{\xymatrix@M=0mm@R=3mm@C=3mm{ \ar@{.}[rr]\ar@{-}[d] && \ar@{-}[d] \\
\ar@{.}[rr]\ar@{.}[d] && \ar@{.}[d] \\
\ar@{.}[d] && \ar@{.}[d] \\
\ar@{.}[rr]\ar@{-}[d] && \ar@{-}[d] \\
\ar@{.}[rr]\ar@{-}[dr] && \ar@{-}[dl] \\
\ar@{.}[rr] && }}
\quad\rightarrow\quad\vcenter{\xymatrix@M=0mm@R=3mm@C=3mm{ \ar@{.}[rr] & \ar@{-}[d] & \\
\ar@{.}[rr] & \ar@{.}[d] & \\
& \ar@{.}[d] & \\
\ar@{.}[rr] & \ar@{-}[d] & \\
\ar@{.}[rr] & \ar@{-}[d] & \\
\ar@{.}[rr] && }}
\end{equation*}
(les notations $\underline{y}_n$ et $\underline{i}_n$ reprennent la forme de ces arbres).
Ce morphisme doit donc agir par une op\'eration de degr\'e $*>0$
pour que le diagramme $\underline{B}^n(A)$
donne le bon r\'esultat. Ceci donne une obstruction \`a avoir une action stricte de la cat\'egorie $\Omega_n^{epi}$
sur la collection $A^{\otimes\In\tauobj}$, $\tauobj\in\Ob\Omega_n^{epi}$,
lorsqu'on \'etend la construction aux alg\`ebres sur une $E_n$-op\'erade.

\end{document}